\documentclass[10pt,reqno]{amsart}
\usepackage[margin=1in]{geometry}  
\usepackage[english]{babel}
\usepackage[utf8]{inputenc}
\usepackage{amsmath,amssymb,amsthm, comment}
\usepackage{amsaddr}
\usepackage{hyperref}
\usepackage{cases}

\usepackage{dgmacros}
\usepackage{titlecaps}
\title{A Priori Estimates for a Relativistic Liquid With Free Surface Boundary}

\author{Dan Ginsberg}

\date{\today}

\begin{document}
\begin{abstract}
  We prove energy estimates
  for a relativistic free liquid body
  with sufficiently small fluid velocity in a general Einstein
  spacetime. These estimates control Sobolev norms of the fluid velocity
  and enthalpy in the interior as well as Sobolev norms of the second
  fundamental form on the boundary.
\end{abstract}
\maketitle
\section{The Relativistic Euler Equations}

For a given globally hyperbolic spacetime $(M, g)$
we consider the motion of an isentropic relativistic perfect fluid:
\begin{align}
  &\nabla_u u^\mu + \frac{1}{\epsilon + p} \aPi^{\mu\nu}\nabla_\nu p = 0,
  &&\textrm{ in } \D \label{int:consu}\\
  &\nabla_u \rho + \rho \nabla_\mu u^\mu = 0, &&\textrm{ in } \D,
  \label{int:consm}
\end{align}
for $\mu,\nu = 0,.., 3$. We are employing Einstein notation and summing
over all repeated upper and lower indices.
Here, $u$ is the four-velocity of the fluid, which is a unit
timelike future-directed vector field. The variables $\epsilon, p, \rho$ are the energy density,
pressure, and mass density, respectively, and are all assumed to be
non-negative in $\D$, and $\Pi$ denotes the projection to the orthogonal
complement of $u$:
\begin{equation}
 \Pi^{\mu\nu} = g^{\mu\nu} + u^\mu u^\nu.
 \label{int:proj}
\end{equation}
We are writing
$\nabla$ for the covariant derivative associated to $g$ and
$\nabla_u  = u^\mu\nabla_\mu$ and are working in units so that the speed of
light is 1. The set
$\D \subset M$
is to be determined, and will satisfy the following constraints.

We suppose that $M$ is foliated by spacelike hypersurfaces $M_t$.
Let $\D_0$ be a bounded domain diffeomorphic to the unit
ball and contained in $M_0$.
Given a velocity field
$(u_0)^\mu$, energy density $\epsilon_0$, mass density $\rho_0$ and
pressure $p_0$ on $\D_0$ satisfying:
\begin{align}
    & (u_0)^\mu (u_0)_\mu = -1,&&\textrm{ in } \D_0,\\
    &\epsilon_0, \rho_0, p_0 > 0 &&\textrm{ in } \D_0,\\
    &\epsilon_0, \rho_0 > 0 &&\textrm{ on } \pa \D_0,\label{liquideps}\\
    &p_0 = 0 &&\textrm{ on } \pa \D_0 \label{liquid},
\end{align}
we want the domain
 $\D = \bigcup\limits_{0 \leq t \leq T} \{t\} \times \D_t$ to satisfy:
\begin{align}
  &u^\mu u_\mu = -1 &&\textrm{ in } \D,\label{int:norm}\\
  &\epsilon, \rho, p > 0 &&\textrm{ in } \D,\label{int:liquid2}\\
  &p = 0 &&\textrm{ on } \Lambda, \label{int:vacuum}\\
  &u^\mu N_\mu = 0 &&\textrm{ on } \Lambda,
  \label{int:free}
\end{align}
where $N_\mu$ is the unit conormal to $\Lambda = \bigcup\limits_{0 \leq t \leq T}
\{t\} \times \pa \D_t$.

In the non-relativistic setting, it is well known
(see \cite{Ebin1987}) that the analog of the above problem
is ill-posed unless the
``Taylor sign condition'' holds. This is
the requirement that:
\begin{align}
  \nabla_N p > -\delta > 0 \textrm{ on } \Lambda,
  && \textrm{ where } \nabla_N = N^i \nabla_i.
  \label{int:tsc}
\end{align}

The above equations do not close, because there are six independent
variables but only four equations. For an isentropic fluid,
the energy density $\epsilon$ is a function of the mass
density $\rho$,
$\epsilon = \epsilon(\rho)$
for some smooth, non-negative, increasing function $\epsilon$,
which is determined from the other thermodynamical variables
$\rho, p$ by solving:
\begin{equation}
  \frac{d \epsilon}{d \rho} = \frac{\epsilon + p}{\rho}.
  \label{int:therm}
\end{equation}

We will also assume that the fluid is barotropic, which means that
$p = p(\epsilon)$ for a given smooth, non-negative increasing function $p$.
In this case, \eqref{int:vacuum} implies that $\epsilon \equiv \epsilon_0$
on the boundary and therefore $\rho = \rho_0$ on $\pa \D_t$ as well. The
assumption that $\rho_0, \epsilon_0 > 0$ corresponds to an equation
of state modeling a liquid.

Using \eqref{int:therm}, given any one of the variables $p,\rho, \epsilon$, we can
determine the other two. In fact it is convenient to work in terms of
the (specific) enthalpy $\sqrt{\sigma}$:
\begin{equation}
  \sqrt{\sigma} = \frac{\epsilon + p}{\rho}.
  \label{int:enth}
\end{equation}
If $p = p(\epsilon)$ is given,
then both $\epsilon, \rho$ can be determined from \eqref{int:therm}
once $\sigma$ is known.
From now on, we will think of $\sigma$ as the fundamental thermodynamical
variable, so $(p, \rho, \epsilon) = (p(\sigma), \rho(\sigma), \epsilon(\sigma))$.

If we set
\begin{equation}
  V^\mu = \sqrt{\sigma} u^\mu,
  \label{int:vdef}
\end{equation}
then using \eqref{int:therm}, the equations
\eqref{int:consu} and \eqref{int:consm} can be written in terms of
$V$ and $\sigma$:
\begin{align}
  &\nabla_V V^\mu + \frac{1}{2} g^{\mu\nu}
  \nabla_\nu \sigma = 0,\label{int:dtv}\\
  &\nabla_V e(\sigma) + \nabla_\mu V^\mu = 0,
  \label{int:dtesig}
\end{align}
where $e(\sigma) = \log (\rho(\sigma)/\sqrt{\sigma})$, and the boundary
conditions are:
\begin{align}
  \sigma &= \sigma_0, && \textrm{ on } \pa \D_t,\label{int:liquidsig}\\
  N_\mu V^\mu &= 0, && \textrm{ on } \pa \D_t,
 \label{int:free2}
\end{align}
with $\sigma_0 = \frac{\epsilon_0 + p_0}{\rho_0}$.
For a derivation of \eqref{int:dtv}-\eqref{int:dtesig}, see, for example
\cite{Christodoulou1995a}. We will derive energy estimates for
the system \eqref{int:dtv}-\eqref{int:free2} with
sufficiently small fluid velocity, assuming that
\eqref{int:tsc} holds.

\subsection{Previous results}
In the non-relativistic setting, if $\D_t$ is unbounded and the
fluid is incompressible and irrotational, the above is known as the ``water waves'' problem.
In this case,
global existence for small initial data has been proven using a variety of
techniques;
see for example \cite{W99}, \cite{Germain2009} as well as
the recent survey article \cite{Ionescu2017} for a detailed history of that problem.

For an incompressible bounded fluid body with nonvanishing curl,
in \cite{CL00}, Christodoulou-Lindblad prove energy estimates that
control the fluid velocity and the second fundamental form of the boundary,
and our approach is modeled on theirs.
Local well-posedness for this case was first proved
by Lindblad in \cite{L05a} using a Nash-Moser iteration.
Coutand-Shkoller \cite{CS07} were able to avoid the use of
Nash-Moser by using a tangential smoothing operator and fractional Sobolev
spaces.
In his PhD. thesis, Nordgren
\cite{Nordgren2008} modified the methods of \cite{CS07}
and proved local well-posedness for a self-gravitating
incompressible fluid.

In the case of a compressible liquid body,
local well-posedness was again proved by Lindblad in \cite{Lindblad2005}
using a Nash-Moser iteration. In \cite{CHS13}, the authors showed
local well-posedness by introducing an ``artificial viscosity'' term
and studying the zero viscosity limit.
See also the recent paper \cite{Lindblad2017} where the estimates
from \cite{CL00} are extended to handle a compressible liquid.
The case that $\rho = 0$ on $\pa \D_t$ is known as a gas;
see for example \cite{CLS10},  for a priori estimates
and \cite{Jang2015}, \cite{Makino2017}, \cite{CHS13}
for local well-posedness. See also \cite{Hadzic2017},
where a nontrivial family of global-in-time solutions are constructed for
a self-gravitating compressible gas.

Moving now to the free boundary problem for the relativistic Euler equations,
the monographs \cite{Christodoulou2007}, \cite{Christodoulou2017}
and \cite{Holzegel2016} contain a detailed analysis
of shock formation in the exterior of an irrotational fluid body.
For the interior of a liquid body with non-vanishing vorticity,
Oliynyk \cite{Oliynyk2017a} recently obtained energy estimates
in in Lagrangian coordinates. The approach and estimates there
are quite different from the present paper;
Oliynyk estimates Lagrangian derivatives of the fluid velocity and the
Lagrangian coordinate map,
while we derive estimates for Eulerian derivatives of the velocity
and the second fundamental form.
We remark that the assumption \eqref{assump:stat}
is not needed for the estimates in \cite{Oliynyk2017a}; see
the discussion after \eqref{assump:stat}.
Oliynyk also recently announced a proof
of local well-posedness for this problem; see \cite{Oliynyk2017}.
See also \cite{HSS15} and  \cite{Jang2016} for energy estimates
for relativistic gas bodies.

\subsection{Energy estimates}
We begin by recalling the basic energy associated to the system
\eqref{int:consu}-\eqref{int:consm} with boundary conditions
\eqref{int:vacuum}-\eqref{int:free}.
The equations \eqref{int:consu} are a consequence of the Einstein equations:
\begin{equation}
  R^{\mu\nu} - \frac{1}{2} R g^{\mu\nu} = T^{\mu\nu},
  \label{int:emtensor}
\end{equation}
where $T^{\mu\nu}$ is the energy-momentum tensor for a perfect fluid, defined
by:
\begin{equation}
  T^{\mu\nu} = (\epsilon + p) u^\mu u^\nu + p g^{\mu\nu},
  \label{int:em}
\end{equation}
and $R^{\mu\nu}, R$ are the Ricci and scalar curvatures of $g$, respectively.
By the second Bianchi identity, $\nabla_\mu T^{\mu\nu} = 0$, and
\eqref{int:consu} is then obtained by considering the component of
\eqref{int:emtensor} orthogonal to $u$ (see \cite{Wald1984} or
\cite{Rezzolla2013} for a derivation).

We let $\tau_\mu$ denote the future-directed unit normal to the time slices
$\D_t$ and we let $\sg$ denote the induced metric on $\D_t$ (see section
 \ref{sec:spatsec}).
Integrating $\tau_\nu \nabla_\mu T^{\mu\nu} = 0$ over $\D$ gives:
\begin{multline}
 0 = \int_{\D} \tau_\nu \nabla_\mu T^{\mu\nu} \dvol\\
 = \int_{\D_t}(-\tau_\nu \tau_\mu) T^{\mu\nu}\dvols
 -\int_{\D_0} (-\tau_\nu \tau_\mu)T^{\mu\nu}\dvols
 + \int_{\Lambda} \tau_\nu N_\mu T^{\mu\nu} \dvolbdy
 - \int_{\D} \nabla_\mu(\tau_\nu) T^{\mu\nu}\dvol.
 \label{en:ibparg}
\end{multline}
Here, $\dvols$ and $\dvolbdy$ denote the volume element
on $\D_t$ and $\Lambda$, respectively.
The boundary term is:
\begin{equation}
 \int_{\Lambda} T^{\mu\nu} N_\mu \tau_\nu \dvolbdy
 = \int_{\Lambda}\bigg( (p + \rho) (u^\mu N_\mu) (u^\nu\tau_\nu)
 + p N^\mu \tau_\mu\bigg) \dvolbdy = 0,
 \label{bc}
\end{equation}
by \eqref{int:vacuum} and \eqref{int:free}.

Writing $u^\tau = u^\mu \tau_\mu$, we have:
\begin{equation}
  \int_{\D_t} (\epsilon + p)\big(u^\tau\big)^2
  + p \,(\tau^\mu \tau_\mu) \dvols =
  \int_{\D_t} \epsilon \big(\utau\big)^2 + p\big( (\utau)^2 -1\big) \dvols.
  \label{}
\end{equation}
Because $u$ is a unit timelike vector, $1 = -u^\mu u_\mu =
(\utau)^2 - \sg(u, u)$,
so if we define:
\begin{equation}
 \E_0(t) = \int_{\D_t} \epsilon (\utau)^2 + p \sg(u,u)\dvols,
 \label{e0def}
\end{equation}
then \eqref{en:ibparg} and \eqref{bc} give:
\begin{equation}
 \E_0(t) = \E_0(0) + \int_{\D} T^{\mu\nu} (\nabla_\mu \tau_\nu)\dvol.
 \label{e0bd}
\end{equation}
We can write $T^{\mu\nu} \nabla_\mu \tau_\nu = \frac{1}{2} T^{\mu\nu}
(\nabla_\mu\tau_\nu + \nabla_\nu \tau_\mu) = \frac{1}{2}T^{\mu\nu}
\L_\tau g_{\mu\nu}$,
where $\L$ denotes the Lie derivative. In the special case that $\tau$
is a Killing field (IE that $(M, g)$ is stationary),
the second term in \eqref{e0bd} vanishes and the energy \eqref{e0def} is conserved.

We would like a higher-order version of the energy \eqref{e0def}.
Following \cite{CL00} and \cite{Lindblad2016},
the energies that we construct
for the equations \eqref{int:dtv} have interior terms and a boundary
term:
\begin{multline}
  \E^{k,\ell}(t)
  = \frac{1}{2}\int_{\D_t}
  \bigg(\big(\tau_\mu\tau_\nu + \sg_{\mu\nu}\big)
  Q(\nao^k \nabla_u^\ell V^\mu, \nao^k \nabla_u^\ell V^\nu)
  \bigg)
  \frac{\sqrt{\sigma}}{|\ubar| - \utau} \dvols\\
  + \frac{1}{4} \int_{\D_t}
  Q(\nao^k \nabla_u^\ell
  \sigma, \nao^k\nabla_u^\ell\sigma)\,\,(-\Vtau) e'(\sigma) \dvols\\
  +  \frac{1}{4} \int_{\pa \D_t}
  Q(\nao^{k}\nabla_u^\ell
  \sigma, \nao^{k}\nabla_u^\ell\sigma)
  \frac{(-\Vtau)}{|\nabla_N \sigma|}\dvolbdy,
  \label{int:ekl}
\end{multline}
Here, $\ubar$ is the spatial part of $u$ and $|\ubar| = \sg(\ubar, \ubar)$.
See section \ref{sec:defsec} for notation.
Recall that $\utau < 0$ so the $\E^{k,\ell}$
are non-negative.
Here, $\sd$ is the
covariant derivative on $\D_t$, and
$Q$ is a positive definite quadratic form which is the
inner product of the tangential components on the boundary and increases
to the full norm in the interior, which we define as follows.
We let $\widetilde{\n}$ denote the outer unit conormal to $\pa \D_t$,
and we let $\n$ denote an extension of $\widetilde{\n}$ to a neighborhood
of the boundary. Writing $d = d(x) = \dist(x, \pa\D_t)$ for the geodesic
distance to the boundary, we let $\chi = \chi(d)$ denote a smooth
function which is zero in the interior of $\D_t$ and one near $\pa \D_t$.
We then define the projection:
\begin{align}
  \Pis_i^j = \delta_i^j - \chi(d)\n_i \n^j,
  \label{Pisdef}
\end{align}
where $\n^j = \bar{g}^{j\ell}\n_\ell$ and Latin indices $i,j,k,...$ run
over $1,2,3$.
For $x \in \pa \D_t$, this is the projection from $T_x(\D_t)$ onto
$T_x(\pa \D_t)$, and in the interior it is the identity map. We will also write:
\begin{equation}
 \gamma^{ij} = \sg^{ik}\Pis^j_k
 \label{}
\end{equation}
for the metric on $\pa \D_t$ extended to a neighborhood of the boundary.
If
$\alpha, \beta$ are $(1, r)$ tensors on $\D$ then we define:
\begin{equation}
  Q(\alpha^\mu, \beta^\nu) = \gamma^{i_1 j_1} \cdots \gamma^{i_r j_r}
  \alpha_{i_1\cdots i_r}^\mu \beta_{j_1\cdots j_r}^\nu.
  \label{}
\end{equation}

For each $\mu,\nu$,
$Q(\alpha^\mu, \beta^\nu)$ is the inner product
of the spatial components of $\alpha$ and $\beta$ which are tangent
to $\pa \D_t$, and in the interior of $\D_t$ it is the inner product of
spatial components of $\alpha$ and $\beta$.

To control the curl we define:
\begin{equation}
 \K^{r}(t) = \int_{\D_t} | \curl \sd^{r-1}V|^2 \dvols.
 \label{}
\end{equation}

To control the enthalpy $\sigma$, we
use the fact that $\sigma$ satisfies the following wave equation:
\begin{equation}
  -\nabla_V^2 e(\sigma) + \frac{1}{2} \nabla_\mu \nabla^\mu \sigma
  = -(\nabla_\mu V^\nu)(\nabla_\nu V^\mu),
  \label{int:wave}
\end{equation}
which follows by taking the divergence of \eqref{int:dtv} and subtracting it
from $\nabla_V$ applied to \eqref{int:dtesig}.
Using \eqref{int:wave}, we are able to prove energy estimates for the
quantities:
\begin{multline}
  \EW^{r}(t) = \frac{1}{2} \int_{\D_t}
  \bigg( (\nabla_u^{r+1} \sigma)^2
  + \aPi^{\mu\nu} \big(\nabla_\mu \nabla_u^r \sigma\big)
  \big( \nabla_\nu \nabla_u^r \sigma\big)
   \frac{1}{|\bar{u}|-\utau}\bigg) \dvols  +\\
  \frac{1}{2} \int_{\D_t}   (\nabla_u^{r+1} \sigma)^2
  (\eta^{-2} - 1)
  (-\utau) \dvols.
  \label{int:waveen}
\end{multline}
Recall that $\aPi$ is the projection onto the local simultaneuous
space of the fluid, defined in \eqref{int:proj}. Here, $\eta$ is the
sound speed, $\eta(\epsilon) \equiv \sqrt{p'(\epsilon)}$. Since the
speed of light is one in these units, we require
$\eta \leq 1$, which makes the last term non-negative. We remark that for an
equation of state with $\eta = 1$, the operator on the right-hand side
of \eqref{int:wave} reduces to the wave operator in the metric $g$
and \eqref{int:waveen} reduces to the usual energy for the wave equation on $(M, g)$,
ignoring the factor of $( |\ubar| - \utau)^{-1}$.

The energies we consider are then:
\begin{equation}
 \E^r(t) = \sum_{k + \ell = r}\bigg( \E^{k,\ell}(t)
 + \K^{k,\ell}\bigg) + \EW^{r}(t).
 \label{}
\end{equation}
In order to close our energy estimates, we will need to assume that the
spatial part of the fluid velocity is bounded. We write:
\begin{equation}
 \lambda = \frac{|\ubar|}{|\utau|}.
 \label{}
\end{equation}
Note that $\lambda < 1$ because $u$ is a unit timelike
vector field.  We assume that:
\begin{equation}
 \lambda \leq \lambda^*,
 \label{int:stat}
\end{equation}
where $\lambda^* = \lambda^*(r)$ is a sufficiently small constant.
See the discussion after equation \eqref{matderivident}.

We also need some a priori assumptions:
\begin{align}
 &|\nabla u| + |\nabla \sigma| \leq M_1, \textrm{ in } \D_t,\\
 &|\nabla^2\sigma| \leq M_2 \textrm{ on } \pa \D_t.
 \label{int:ap}
\end{align}
In addition, we require that the equation of state $p = p(\epsilon)$ is
reasonably well-behaved; see section \ref{fluidassump} for the specific
assumptions.

Our main theorem is then:
\begin{theorem}
  \label{int:main}
  Suppose that the above assumptions hold.
  For $r \geq 5$ there is a
  $T^*$ so that for $0 \leq t \leq T^*$:
    \begin{equation}
      \E^r(t) \leq 2 \E^r(0).
    \label{}
  \end{equation}
\end{theorem}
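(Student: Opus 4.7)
The plan is a bootstrap/continuity argument. Let $T^\dagger$ be the supremum of times $t \in [0,T]$ on which both $\E^r(t) \leq 2\E^r(0)$ and the a priori bounds \eqref{int:ap} continue to hold. I aim to establish a differential inequality
\[
  \frac{d}{dt}\E^r(t) \leq C\,P\bigl(\E^r(t)\bigr), \qquad t \in [0,T^\dagger),
\]
with $P$ a fixed polynomial and $C = C(M_1, M_2, \lambda^*, \delta^{-1}, M, g)$ independent of $t$. Gronwall then gives $\E^r(t) \leq \E^r(0)\exp(Ct)$, and choosing $T^* = T^*(\E^r(0), C)$ small forces strict inequality at $T^*$, closing the bootstrap.

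For the inequality itself I differentiate each piece of $\E^r$ in $t$. For $\E^{k,\ell}$ I commute $\nao^k\nabla_u^\ell$ through \eqref{int:dtv} and \eqref{int:dtesig} and pair in $Q$ with $\nao^k\nabla_u^\ell V$ and $\nao^k\nabla_u^\ell \sigma$ respectively. Integrating by parts on $\D_t$ produces a boundary term whose leading part contains $\nabla_N\sigma$; thanks to the weight $1/|\nabla_N\sigma|$ in the boundary piece of \eqref{int:ekl}, its time derivative compensates this boundary contribution exactly, with the remainder controlled via \eqref{int:tsc}. Commutators $[\nao^k\nabla_u^\ell,\nabla_V]$ and $[\nao^k\nabla_u^\ell,\nabla^\mu]$ generate curvature and lower-order products absorbable into $P(\E^r)$ using $M_1, M_2$. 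The curl energy $\K^r$ comes from applying $\curl$ to \eqref{int:dtv} to get a transport equation $\nabla_u(\curl V)=\text{lower order}$, commuting with $\sd^{r-1}$, and pairing in $L^2$; no boundary term appears. For $\EW^r$, I commute the wave equation \eqref{int:wave} with $\nabla_u^r$ and pair with $\nabla_u^{r+1}\sigma$; the boundary term vanishes because $\sigma = \sigma_0$ on $\pa\D_t$ and \eqref{int:free2} makes $\nabla_u$ tangential there, so $\nabla_u^r\sigma$ vanishes on the boundary. The hypothesis $\eta \leq 1$ makes the last term in \eqref{int:waveen} coercive.

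The main obstacle is the interdependence between boundary and interior regularity. The tangential derivative $\nao$ and the projection $\Pis$ are built from the extended conormal $\n$, whose regularity is inherited from that of the second fundamental form $\theta$ of $\pa\D_t$; but $\theta$ itself is only controlled through the boundary piece of \eqref{int:ekl}, using (as in \cite{CL00}) the identity that $\nao^2\sigma|_{\pa\D_t} = (\nabla_N\sigma)\theta$ modulo lower-order terms, so that $\int_{\pa\D_t}Q(\nao^r\sigma,\nao^r\sigma)/|\nabla_N\sigma|\,\dvolbdy$ controls $\|\theta\|_{H^{r-2}(\pa\D_t)}$ up to quantities already in $\E^r$. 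Closing this circular coupling requires precise cancellation of top-order terms in the commutators: they must either vanish by symmetry, land in the boundary energy, or be amenable to the Taylor sign condition. The threshold $r \geq 5$ reflects the Sobolev embedding $H^{r-2}(\pa\D_t) \hookrightarrow C^1$ on the two-dimensional surface $\pa\D_t$, which is needed for pointwise control of $\theta$ and its first derivative and, through them, of $\n, \Pis$ in the interior. The smallness \eqref{int:stat} enters to ensure that the weight $\sqrt{\sigma}/(|\ubar|-\utau)$ is two-sided bounded, making the velocity energy coercive, and to let $\nabla_u^\ell$ be treated as essentially $\partial_t^\ell$ so that the principal structure of the tangential energy is preserved under commutation. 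Once these Sobolev-type inequalities on the moving domain are available, all error terms aggregate into $P(\E^r)$ and the bootstrap closes.
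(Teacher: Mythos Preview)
Your overall strategy---energy estimates plus a bootstrap/continuity argument---matches the paper's, and your description of how the boundary term in $\E^{k,\ell}$ cancels against the integration-by-parts contribution, how $\K^r$ and $\EW^r$ are handled, and how $\theta$ is recovered from the boundary energy via \eqref{projbigo} is all correct in outline.

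There is, however, a genuine gap in your sketch: you do not explain how $\E^r$ controls \emph{full} spatial derivatives of $V$ and $\sigma$. The quadratic form $Q$ in \eqref{int:ekl} only measures tangentially projected derivatives near the boundary, so $\E^{k,\ell}$ by itself does not bound $\|\sd^k\nabla_u^\ell V\|_{L^2(\D_t)}$. The paper closes this via the div--curl elliptic estimates of \cite{CL00} (see Lemma~\ref{coerlem1} and the pointwise estimate \eqref{ellpw}): one bounds $|\sd V|$ by $|\sdiv V| + |\scurl V| + |\Pis\sd V|$, and then uses \eqref{int:dtesig} and \eqref{int:dtv} to control $\sdiv V$ and $\scurl V$. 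But $\div V$ and $\sdiv V$ differ by $\tau_\mu\nabla_\tau V^\mu$, and writing $\nabla_\tau = (\utau)^{-1}(\nabla_u - \ubar^\mu\nabla_\mu)$ produces a term $\lambda|\sd V|$ that must be absorbed into the left side. \emph{This} is where the smallness \eqref{int:stat} is actually needed---not, as you wrote, for the two-sided boundedness of the weight $\sqrt{\sigma}/(|\ubar|-\utau)$, which holds automatically since $(|\utau|-|\ubar|)(|\utau|+|\ubar|)=1$. The same mechanism appears when rewriting the wave equation \eqref{int:wave} as the elliptic equation \eqref{ellform1} to control spatial derivatives of $\sigma$: the term $\nabla_\tau^2\sigma$ contributes $\lambda^2|\sd^2\sigma|$, again absorbed only when $\lambda$ is small. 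Without this coercivity step your commutator error terms, which involve full derivatives, cannot be shown to lie in $P(\E^r)$.

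Two smaller points. First, from $\tfrac{d}{dt}\E^r \leq C\,P(\E^r)$ with $P$ polynomial you do not get $\E^r(t)\leq \E^r(0)e^{Ct}$; you get a comparison with an ODE that may blow up in finite time, which still suffices for a short-time bound but is not the exponential you wrote. The paper actually obtains an inequality that is \emph{linear} in $\E^r$ with polynomial dependence only on $\E^{r-1}$ (equation~\eqref{main}), so Gr\"onwall applies cleanly by induction on $r$. Second, the paper derives the energy identity by integrating over the spacetime slab $\D$ and applying Stokes' theorem (Section~``The energy identity''), rather than differentiating $\E^r(t)$ directly; this sidesteps the issue of differentiating integrals over the moving domain $\D_t$, which you would otherwise have to address via Lagrangian coordinates or a transport theorem.
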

See theorem \ref{mainthm} for a precise version of this statement.

The quantities \eqref{int:ekl}
do not directly control all space derivatives of $V$ and
$\sigma$. To get back control over all derivatives of $V$, we use elliptic
estimates as in
\cite{CL00},
which require that we control $\sdiv V$ and $\scurl V$,  the divergence and
curl of $V$ taken over only spatial components; see section \ref{sec:spatsec}.
To control
$\sdiv V$, we will use equation \eqref{int:dtesig} and to control $\scurl V$, we will
use equation \eqref{int:dtv}. In each case, we will need to be able to control
quantities involving $\nabla_\tau V$, and we control
this if we control $1/\utau$ times $\nabla_u V$ and $\ubar^\mu\sd_\mu V$. We can then
use the equation \eqref{int:dtv} and the fact that $\lambda$ is small
to handle these terms. We will also use these elliptic estimates to control
derivatives of $\sigma$.

An important aspect of these elliptic estimates,
which are simple modifications of the estimates in \cite{CL00}
to the case of a non-Euclidean background metric, is that they only depend on
the regularity of the second fundamental form $\theta$ of the boundary
and do not rely on ``straightening the boundary'' is in the usual proof of
the elliptic estimates in Sobolev spaces.

We therefore need to get estimates for $\theta$. For this we
use another key idea from \cite{CL00}, which is that
$q = 0$ on $\pa \D_t$, then $\Pis\sd^r q$ can be controlled in terms of
$\sd^{r-1}q$ on the boundary, provided that the second fundamental
form $\theta$ is bounded. The basic idea is that if $q = 0$ on $\pa\D_t$
, $\Pis \sd q =0$ on $\pa \D_t$, and so:
\begin{equation}
  0 = \big(\Pis_i^k\sd_k\big)\big(\Pis^\ell_j \sd_\ell\big) q
  = \big(\Pis \sd^2\big)_{ij} q + \Pis_i^k\big(\sd_k \Pis_j^\ell\big)\sd_\ell q.
 \label{}
\end{equation}
Because $q = 0$ on $\pa \D_t$, the second term here is:
\begin{equation}
 \Pis_i^k\sd_k (\N_j \N^\ell) \N_\ell \sd_\N q = (\Pis_i^k\sd_k \N_j)\sd_N q,
 \label{}
\end{equation}
so we have:
\begin{equation}
 \big(\Pis \sd^2 \big)_{ij} q = \theta_{ij} \sd_\N q.
 \label{}
\end{equation}
In particular, $\Pis \sd^2 q$ is lower order. Further, if
$|\sd_\N q| > 0$ and we have a bound for $\Pis\sd^2 q$, this gives
a bound for $\theta$.

There is a higher-order version of this identity, which says that if
$q = 0$ on $\pa \D_t$ then:
\begin{equation}
 \Pis \sd^r q = (\nas^{r-2} \theta) \sd_\N q + O(\sd^{r-1} q, \sd^{r-3} \theta).
 \label{projbigo}
\end{equation}
See \eqref{projest}; a more precise version of this statement can be found in
Proposition 4.3 in \cite{CL00}. Using \eqref{projbigo}, we see that
if the sign condition \eqref{int:tsc} holds, the boundary
term in \eqref{int:ekl} controls $||\sd^{r-2} \theta||_{L^2(\pa \D_t)}$.
Using the above ideas, in section \ref{energysec}, we prove:
\begin{equation}
 ||V||_{H^r(\D_t)} + ||\nabla \sigma||_{H^{r}(\D_t)}
  + ||\theta||_{H^{r-2}(\D_t)} \leq C \E^r(t).
\end{equation}
See Lemma \ref{coerlem1} and Proposition \ref{coerprop1}.

\section{Definitions and assumptions}
\label{sec:defsec}
Throughout, we will use the convention that Greek letters
$\mu, \nu,...$ take values 0,1,2,3 and Latin letters $i,j,...$
take values 1,2,3. We sum over any repeated upper and lower indices,
and we will raise and lower indices with the metric $g$. We will write the
components of $g$ as $g_{\mu\nu}$ and the components of the inverse
metric as $g^{\mu\nu}$.

\subsection{Spatial derivatives}
\label{sec:spatsec}

We assume that $(M, g)$ admits a time function $\mathfrak{t}: M \to\R$ so
that if $M_t = \{\mathfrak{t} = t\}$, then the $M_t$ are Cauchy surfaces
and they foliate $M$.
We then define the future directed timelike unit (co)normal to the time
slice $M_t$:
\begin{align}
 &\tau_\mu  = \frac{1}{-\sqrt{g(\nabla \mathfrak{t},
 \nabla \mathfrak{t})}} \nabla_\mu \mathfrak{t} && \tau^\mu = g^{\mu\nu}
 \tau_\nu,
 \label{assump:taudef}
\end{align}
as well as the projection from $T(M)$ to $T(M_t)$:
\begin{equation}
 \sPi^\mu_\nu = \delta_\mu^\nu + \tau^\mu \tau_\nu.
 \label{}
\end{equation}
When $X$ is a four-vector we will write $\sX^\mu = \Pi^\mu_\nu X^\nu$
and $X^\tau = \tau^\mu X_\mu$.
If $(x^1, x^2, x^3)$ are local coordinates on $M_t$ and we write $t = x^0$,
then $X = X^\mu \pa_{x^\mu}$ where:
\begin{align}
 &\sX^0  =  0, &&\sX^i = X^i, i = 1,2,3,
 \label{}
\end{align}
and $X^\tau = \pm X^0$, with the minus sign when $X$ is future
directed and the plus sign when $X$ is past directed.

We let $\sg$ denote the induced Riemannian metric on $M_t$ extended to be
zero on the orthogonal complement of $T(M_t)$ in $T(M)$:
\begin{align}
 &\sg_{\mu\nu} = g_{\mu\nu} + \tau_\mu \tau_\nu, && \sg^{\mu\nu}
 = g^{\mu\nu}+
 \tau^\mu\tau^\nu.
 \label{}
\end{align}
We will also write:
\begin{align}
 &\gr_{\mu\nu} = \sg_{\mu\nu} + \tau_\mu \tau_\nu, &&
 \gr^{\mu\nu} = \sg^{\mu\nu}
 +
\tau^\mu \tau^\nu,
 \label{}
\end{align}
and:
\begin{equation}
 |X|^2 = \gr_{\mu\nu} X^\mu X^\nu = (X^\tau)^2 + \sg^{\mu\nu}X_\mu X_\nu.
 \label{}
\end{equation}
More generally, if $\beta$ is a $(0,r)$ tensor we write:
\begin{equation}
 |\beta|^2 = \gr^{\mu_1 \nu_1}\cdots \gr^{\mu_r\nu_r}\beta_{\mu_1\cdots\mu_r}
 \beta_{\nu_1 \cdots\nu_r}.
\end{equation}

The spatial derivatives of a four-vector $X$ are:
\begin{equation}
 \sd_\mu X^\nu = \sPi_\mu^{\mu'} \sPi^\nu_{\nu'} \nabla_{\mu'}X^{\nu'}.
 \label{sddef}
\end{equation}
Writing $X^\nu = -\tau^\nu X^\tau + \sX^\nu$, we see that:
\begin{equation}
 \sd_\mu X^\nu = -(\sd_\mu \tau^\nu) X^\tau + \sd_\mu \sX^\nu.
 \label{}
\end{equation}
In particular if $X$ is tangent to $M_t$, this agrees with the intrinsic
covariant derivative determined by $\sg$.

The divergence of a four-vector $X$ is defined as:
\begin{equation}
 \div X = \tr_g (\nabla X) =  \nabla_\mu X^\mu
 = \frac{1}{\sqrt{|\det g|}} \pa_\mu \big( \sqrt{|\det g|} X^\mu\big).
 \label{}
\end{equation}

To define the curl, we let $\zeta_X$ denote the one-form associated
to $X$:
\begin{equation}
 \zeta_X = (\zeta_X)_\mu dx^\mu = g_{\mu\nu}X^\nu dx^\mu,
 \label{}
\end{equation}
and then, if $d$ denotes the differential on $M$:
\begin{equation}
 \curl X = d \zeta_X = \big(\pa_\mu (g_{\mu'\nu} X^{\mu'})
 -(\pa_\nu \big(g_{\nu'\mu} X^{\nu'})\big)dx^\mu dx^\nu.
 \label{}
\end{equation}
When $X^\mu = \nabla^\mu \varphi$ for a function $\varphi:M\to\R$,
$\curl X = 0$.

There are analogous definitions of the divergence and curl intrinsic
to $M_t$. If $W \in T(M_t)$ then we write:
\begin{equation}
 \sdiv W = \tr_{\sg} \big( \sd \,W\big) =
 \sd_\mu W^\mu,
 \label{}
\end{equation}
and we extend this to four-vectors $X$ by setting $\sdiv X = \sdiv \sX$.

Writing:
\begin{equation}
 \bar{\zeta}_W =  (\bar{\zeta}_W)_i dx^i = \sg_{ij} W^jdx^i,
 \label{}
\end{equation}
where $(x_1,x_2,x_3)$ is any coordinate system on $M_t$,
we define:
\begin{align}
 \scurl W = \bar{d} \zeta_{W}
 = \big((\pa_i (\sg_{j\ell}W^\ell) - \pa_i (\sg_{i\ell}W^\ell)\big)dx^i
 dx^j,
 \label{}
\end{align}
where $\bar{d}$ is the differential on $M_t$. This is a two-form on
$M_t$ which we extend to a two-form on $M$ by setting:
\begin{equation}
 \scurl X(\tau,Y) = 0,
 \label{}
\end{equation}
for any vector $Y$.
We will also let $\sDelta$ denote the Laplace-Beltrami operator
determined by $\sg$:
\begin{equation}
 \sDelta q  = \sd_i \sd^i q
 = \sg^{ij} \sd_i \sd_j q = \sdiv \sd q.
 \label{assump:lb}
\end{equation}

We let $Rm$ denote the Riemann curvature tensor of $(M, g)$:
\begin{equation}
  Rm_{\mu\nu \alpha\beta}X^\beta = \nabla_\mu \nabla_\nu X_\alpha
  - \nabla_\nu \nabla_\mu X_\alpha,
\end{equation}
If $(x^0, x^1,x^2, x^3)$ are local coordinates on $M$ then:
\begin{equation}
 Rm_{\mu\nu\alpha}^\beta = \pa_\mu \Gamma^\beta_{\alpha \nu} -
 \pa_\alpha \Gamma^\beta_{\nu\mu}
 + \Gamma_{\nu\gamma}^\beta \Gamma^\gamma_{\alpha\mu} -
 \G^\beta_{\alpha\gamma}\G^\gamma_{\nu\mu},
 \label{}
\end{equation}
where the $\Gamma$ denote the Christoffel symbols of $g$ in this
coordinate system.

It is convenient to assume a bound of the form:
\begin{equation}
 \sum_{s = 1}^N |\nabla^s Rm| + |\nabla^s \tau| \leq R,
 \label{assump:rm}
\end{equation}
for some constant $R$ and sufficiently large $N$. A simple consequence that
we will use in section \ref{sec:ell} is that:
\begin{equation}
 \nabla_{\pi(I)} \beta = \nabla_I \beta + \mathcal{R}(\beta),
 \label{perm}
\end{equation}
where $I = (i_1,..., i_s)$ with $s \leq N$,
$\pi$ is a permutation on $s$ letters and $\mathcal{R}$
satisfies the estimates:
\begin{align}
 ||\mathcal{R}(\beta)||_{L^\infty(M_t)} \leq R\sum_{j \leq s-2}
 ||\nabla^j \beta||_{L^\infty(M_t)}, &&
 ||\mathcal{R}(\beta)||_{L^2(M_t)} \leq R\sum_{j \leq s-2}
 ||\nabla^j \beta||_{L^2(M_t)}.
 \label{}
\end{align}
It would not be difficult
to prove our results with milder assumptions on $Rm$ than
\eqref{assump:rm}, but this makes the statements and proofs of many
of our theorems simpler.

\subsection{The free boundary $\pa\D_t$}
We set $\D_t = \D \cap M_t$, and we let $\N = \N_i dx^i$
denote the exterior unit conormal to $\pa \D_t$ in $M_t$. We will
also write $N = N_\mu dx^\mu$ for the exterior unit conormal
to $\Lambda$.
We will use the metric $\sg$ to raise and lower indices $i,j,k,...$,
so that $\N^i = \sg^{ij}\N_j$.
The projection from $T(\D_t)$ to $T(\pa \D_t)$ at the boundary is:
\begin{equation}
 \Pis_i^j = \delta^i_j - \N_i \N^j,
 \label{}
\end{equation}
and the projection of an $(r,s)$ tensor $S$ to the boundary is:
\begin{equation}
 (\Pis S)_{i_1\cdots i_s}^{j_1\cdots j_r} = \Pis_{i_1}^{k_1}
 \cdots\Pis_{i_s}^{k_s}\Pis_{\ell_1}^{j_1}
 \cdots\Pis_{\ell_r}^{j_r}S_{k_1\cdots k_s}^{\ell_1\cdots\ell_r}.
 \label{}
\end{equation}

We let $\gamma$ denote the induced metric on $T(\pa \D_t)$,
extended to be zero on the orthogonal complement in $T(\D_t)$:
\begin{align}
 &\gamma_{ij} = \sg_{ij} - \N_i\N_j, && \gamma^{ij} = \sg^{ij} - \N^i\N^j.
 \label{}
\end{align}
We also write $\dvolbdy$ for the volume element on $\pa \D_t$.

We will use the following tangential spatial derivatives:
\begin{equation}
 \nas_i T^j = \Pis_i^{i'}\Pis_j^{j'} \sd_{i'} T^{j'}.
 \label{}
\end{equation}
When $T$ is tangent to $\pa \D_t$, this agrees with the intrinsic
covariant derivative on $\pa \D_t$ determined by the metric $\gamma$.

The second fundamental form of $\pa \D_t$ is:
\begin{equation}
 \theta_{ij} = (\Pis \sd \N)_{ij}.
 \label{}
\end{equation}
We let $\iota_0$ denote the normal injectivity radius of $\D_t$. By
definition this is the largest number $\iota_0$ so that the normal exponential
map:
\begin{equation}
 (-\iota_0, \iota_0) \times \pa \D_t \to \{x \in M_t |\,
 \textrm{dist}_{\sg}(x,\pa \D_t) < \iota_0\},
 \label{}
\end{equation}
defined by:
\begin{equation}
 (\iota, \omega) \mapsto \omega + \iota \N(\omega),
 \label{}
\end{equation}
is injective. Here, $\textrm{dist}_{\sg}$ denotes the geodesic distance with respect
to $\sg$.
We will assume that the following assumption on the boundary holds:
\begin{align}
 |\theta| + \frac{1}{\iota_0} \leq K \textrm{ on } \pa \D_t.
 \label{assump:bdy}
 \end{align}

 \subsection{Sobolev spaces}
 For a $(0,r)-$tensor $T$ on $\D$, recall that we are writing:
 \begin{equation}
  |T|^2 = \gr^{\mu_1\nu_1} \cdots\gr^{\mu_r \nu_r}T_{\mu_1 \cdots  \mu_r}
  T_{\nu_1\cdots \nu_r},
  \label{}
 \end{equation}
 where $\gr$ is the Riemannian metric $\gr_{\mu\nu} = \sg_{\mu\nu} +
 \tau_\mu \tau_\nu$.
 We define:
 \begin{align}
  ||T||_{L^2(\D_t)}^2 =
  \int_{\D_t}|T|^2 \dvols && ||T||_{H^r(\D_t)}^2
  = \sum_{\ell = 0}^r ||\sd^\ell T||_{L^2(\D_t)}^2,
  \label{}
 \end{align}
 as well as:
 \begin{align}
  ||T||_{L^2(\pa \D_t)}^2 &= \int_{\pa \D_t} |T|^2 \dvols,
  &&||T||_{H^r(\pa \D_t)}^2 = \sum_{\ell = 0}^r
   ||\nas^\ell T||_{L^2(\pa \D_t)}.
  \label{}
 \end{align}

\subsection{Assumptions on the fluid variables}
\label{fluidassump}

By definition, the fluid velocity $u$ is a future-directed timelike unit vector
field:
\begin{align}
 &u^\mu u_\mu = -1, &&u^\tau < 0.
 \label{unit}
\end{align}
In particular note that $\utau \geq 1$ with equality only
when $\ubar = 0$.
With $\lambda = \frac{|\ubar|}{|\utau|}$, we will assume that:
\begin{equation}
  \lambda \leq \lambda^*,
 \label{assump:stat}
\end{equation}
for some $\lambda^*$ which will be chosen sufficiently small.
By \eqref{unit}, we always have $\lambda < 1$.
We now explain why we need this assumption.
First, in \eqref{ellpw} we will estimate $\sd V$ in terms
of $\sdiv V, \scurl V$ and tangentially projected derivatives
$\Pis \sd V$. The equation \eqref{int:dtesig} gives us an equation
for $\div V$, and by \eqref{sdivident}, we have:
\begin{equation}
 \sdiv V = \div V + \tau_\mu\nabla_\tau V^\mu.
 \label{}
\end{equation}
There is a similar formula relating $\scurl V$ and $\curl V$,
and so to close the estimates we must control $\nabla_\tau V^\mu$.
By definition:
\begin{equation}
  \nabla_\tau = \frac{1}{\utau} \big( \nabla_u - \ubar^\mu \nabla_\mu\big).
  \label{matderivident}
\end{equation}
This leads to an estimate of the form:
\begin{equation}
 |\sd V| \leq C \big( |\sdiv V| + |\scurl V| + |\nabla_u V| +
 \lambda |\sd V| + |\Pis \sd V|\big).
 \label{}
\end{equation}
We have equations for $\sdiv V, \scurl V$ and $\nabla_uV$, and
the energy bounds the last term. If
\eqref{assump:stat} holds, we are able to absorb the fourth term into the
left-hand side.
See Lemma \ref{difflem}.

We also use
\eqref{assump:stat} to control $\sigma$ in section \ref{wave}. By
the wave equation \eqref{int:wave} and using the fact that
$\nabla_V$ preserves boundary conditions (by \eqref{int:free2}),
it is straightforward to prove estimates for $\nabla_V^\ell \sigma$.
To control space derivatives,
in \eqref{ellform1} we re-write \eqref{int:wave}
as an elliptic
equation of the form:
\begin{equation}
 \sDelta \sigma = e'(\sigma) \nabla_V^2 \sigma + \nabla_\tau^2 \sigma + ...
 \label{}
\end{equation}
We can then use the elliptic estimates from
\cite{CL00} to control spatial derivatives. We will control
$\nabla_V^2\sigma$ directly as above but we again need \eqref{assump:stat}
to deal with the pure time derivatives of $\sigma$.

In \cite{Oliynyk2017}, Oliynyk has also proved energy estimates for the
system \eqref{int:dtv}-\eqref{int:free2} using a different approach which avoids
the assumption \eqref{assump:stat}.
There, the equations \eqref{int:dtv}-\eqref{int:dtesig} and boundary conditions
\eqref{int:liquidsig}-\eqref{int:free2} are differentiated once in time
and then reformulated as a system of wave equations with
``acoustic'' boundary conditions. In Lagrangian coordinates,
one can use the ``standard'' elliptic estimate in fractional Sobolev
spaces (see Theorem B.4 in \cite{Oliynyk2017}, and
Proposition B.3.1 in \cite{Nordgren2008} for similar estimates)
to get control over, in our notation, $||V||_{H^r(\D_t)} +
||\sigma||_{H^{r+1}(\D_t)} +
||x||_{H^{r+1/2}(\pa \D_t)}$, where $x$ denotes the Lagrangian coordinate
 map. This is different from our energy, where one instead controls
 the second fundamental form on the boundary in Sobolev spaces with
 an integer number of derivatives.

We will be considering a barotropic
fluid, so that $p = p(\epsilon)$ for some smooth strictly increasing
function $p$. We assume that:
\begin{align}
 \bigg|\frac{d^k}{d \epsilon^k} p(\epsilon)\bigg| \leq L_1,
 \,\,  k = 1,..., N,
 \label{assump:baro}
\end{align}
for sufficiently large $N$ to be determined later.
The quantity $\eta(\epsilon) \equiv \sqrt{p'(\epsilon)}$ is called
the sound speed, and we will assume that there is a constant
$L_2$ so that:
\begin{align}
  &0 < L_2 \leq \eta^2 \leq 1. \label{assump:sound}
\end{align}
Recall that we are working in units so that the speed of light is 1, so this
says the sound speed cannot exceed the speed of light.

Using \eqref{int:therm} and the fact that $p$ is an invertible
function of $\epsilon$, given any one of
$p,\rho, \epsilon$, we can determine the other two. As mentioned in the
introduction, we will work
in terms of the enthalpy $\sigma$, which is defined by:
\begin{equation}
 \sqrt{\sigma} = \frac{\epsilon + p}{\rho}.
 \label{}
\end{equation}
We also need to assume a lower bound for $\sigma$ in
$\D_t$:
\begin{equation}
 0 < L_3 \leq \sigma.
 \label{assump:lower}
\end{equation}
By \eqref{int:liquidsig}, this holds automatically near $\pa \D_t$ with
$L_3 = \sigma_0/2$, say.

With $e(\sigma) = \log (\rho(\sigma)/\sqrt{\sigma})$, we assume:
\begin{align}
  &\bigg|\frac{d^k}{d\sigma^k} e(\sigma)\bigg|
  \leq L_3 |e(\sigma)|, && \textrm{ for } k = 1,..., N,
  \label{assump:ecoer}
\end{align}
for sufficiently large $N$. We write $L = L_1 + 1/L_2 + 1/L_3 + L_4$.

An interesting equation of state satisfying the above conditions
is the ``two-phase model'':
\begin{equation}
  p(\epsilon) =
  \begin{cases}
 \epsilon - \epsilon_0,&\epsilon > \epsilon_0,\\
 0,  &\epsilon \leq \epsilon_0
\end{cases}
 \label{}
\end{equation}
which was introduced in \cite{Christodoulou1995a} as a model for
the collapse of a spherically symmetric neutron star.
(Note that Christodoulou uses $\rho, n$ to denote the energy
density and mass density, respectively).
This corresponds to sound speed $\eta = 1$ in our notation, which
is the largest physically realistic value. Such a model is known
as a ``stiff'' or ``incompressible'' fluid;
see \cite{Lichnerowicz1967a}, \cite{Rezzolla2013}. See also the
recent work \cite{Fournodavlos2017} where static solutions
to the Einstein-Euler system \eqref{int:emtensor}
with this equation of state are studied.

We will also
need to assume some bounds for derivatives of $u$ and $\sigma$ in $\D$:
\begin{align}
  |\nabla u| + |\nabla \sigma|  \leq M_1, \textrm{ in } \D_t,
  \label{assump:fluid1}\\
  |\nabla^2 \sigma| \leq M_2 \textrm{ on } \pa \D_t,
 \label{assump:fluid2}
\end{align}
and we will write $M = M_1 + M_2$.

As mentioned in the introduction, we will assume that the following
condition holds:
\begin{equation}
 -\sd_{\N} p \geq \delta  >0 \textrm{ on } \pa\D_t.
 \label{assump:tsc}
\end{equation}
In the non-relativistic setting, the equations \eqref{int:consu}-\eqref{int:free}
are ill-posed unless \eqref{assump:tsc} holds; see \cite{Ebin1987}.
Note that by \eqref{int:free2}, writing
$|\nabla p|_g^2 = g(\nabla p, \nabla p)$, the condition \eqref{assump:tsc}
is equivalent to $|\nabla_N p| \geq C \delta > 0$ for some constant
 $C = C(\lambda)$.
By \eqref{int:therm},
$\frac{d}{d p} \sqrt{\sigma} = \frac{1}{\epsilon + p}$,
so if \eqref{liquideps}-\eqref{liquid} and \eqref{assump:tsc} hold,
\begin{equation}
 -\nabla_N \sigma \geq  \delta' \equiv C\frac{\rho_0}{\sigma_0}\delta > 0.
 \label{assump:tsc2}
\end{equation}

\section{Lagrangian coordinates, the extension of the normal}

We now introduce a system of coordinates on $\D_t$, known
as Lagrangian coordinates,
 which fixes the boundary
$\pa \D_t$. Although all of our results are invariant under coordinate changes,
it is convenient to prove some of the estimates in Lagrangian coordinates.
In addition,
we will need to use many of the results from \cite{CL00}
which are stated in Lagrangian coordinates.
Let $\Omega$ be a domain in $\R^3$ and let $f:\D_0 \to \Omega$
be a volume preserving diffeomorphism.
If $V$ satisfies \eqref{int:free}, we define the following vector field on $\D_t$:
\begin{equation}
 W^i = \frac{1}{-V^\tau} \overline{V}^i,
\end{equation}
Then the Lagrangian coordinates are
a mapping $x: [0,T] \times\Omega \to \D$ defined by:
\begin{equation}
 \frac{d}{dt} x(t, y) = W(t, x(t,y)).
 \label{lagdef}
\end{equation}
For each $t$, the map $x(t, \cdot)$ is a diffeomorphism from
$\Omega$ to $\D_t$, and note that $x(t, \pa \Omega) = \pa \D_t$ by
\eqref{int:free}.

We will use the letters $a,b,c...$ to denote quantities expressed in the
$y$-coordinates, and we will abuse notation and use $\sg$ to also denote the
induced Riemannian metric on $\Omega$:
\begin{equation}
 \sg_{ab} = \sg_{ij}\frac{\pa y^a}{\pa x^i}\frac{\pa y^b}{\pa x^j}.
 \label{}
\end{equation}
Similarly we write:
\begin{align}
 \gamma_{ab} &= \gamma_{ij} \frac{\pa y^a}{\pa x^i}\frac{\pa y^b}{\pa x^j},
 && \Pis^a_b = \sg^{ac}\gamma_{bc}
 \label{gamdef}
\end{align}

We now extend the projection $\Pis$ to a neighborhood of the boundary.
Let $d(y) = \dist(y, \pa \Omega)$ denote the geodesic
distance to the boundary, and let $\varphi$ be a function on
$\Omega$ so that $\varphi = 0$ on $\pa \Omega$, $\varphi < 0$ in $\Omega$,
and so that $|\sd \varphi| > 0$ on $\pa \Omega$.
Then the outer conormal and normal to $\pa \Omega$ are given by:
\begin{align}
 \widetilde{\N}_a = \frac{\pa_a \varphi}{\sg(\nabla \varphi, \nabla \varphi)},
 && \widetilde{\N}^a = \sg^{ab}\widetilde{\N}_b.
 \label{extndef}
\end{align}
Let $\iota_0$ be as in \eqref{assump:bdy} and
let $\chi = \chi(d)$ denote a smooth
positive function so that $\chi = 1$ when $d \leq \iota/4$ and
so that $\chi = 0$ when $d > \iota/2$. We then extend the projection $\Pis$
(defined in \eqref{Pisdef}) to the interior by defining:
\begin{equation}
 \widetilde{\Pis}^a_b = \delta^a_b - \chi(d) \widetilde{N}^a \widetilde{N}_b.
 \label{extPidef}
\end{equation}
Away from the boundary this is the identity map on $T(\Omega)$
and on $\pa \Omega$ this is the projection to $T(\pa \Omega)$. From
now on we will write \eqref{extPidef} as $\Pis$.
We will also let $\gamma^{ab}$ denote the metric \eqref{gamdef} extended to the
interior as in \eqref{extPidef}.

The properties of the projection and its extension to the interior that
we will use are:
\begin{lemma}
  Let $h_{ab} = \nabla_V \sg_{ab}$. On $[0,T]\times \pa \Omega$:
 \begin{align}
  \nabla_V \gamma^{ab} = -\gamma^{ac}\gamma^{bd}h_{cd},
  \label{dtgam}
 \end{align}
 and:
 \begin{align}
  ||\sd \gamma||_{L^\infty(\Omega)} \leq C\bigg( ||\theta||_{L^\infty(\pa \Omega)}
  + \frac{1}{\iota_0}\bigg), &&
  ||\nabla_V \gamma||_{L^\infty(\Omega)} \leq C ||h||_{L^\infty(\Omega)}.
  \label{dgam}
 \end{align}
\end{lemma}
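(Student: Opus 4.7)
\medskip

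\textbf{Plan.} The strategy is to expand $\gamma$ in terms of the ambient quantities $\bar g$ and $\widetilde{\N}$ that define it, then differentiate term by term; the role of the Lagrangian framework is to make $V$ tangent to the fixed boundary $\partial\Omega$, which forces $\nabla_V\widetilde{\N}$ to be purely normal on $\partial\Omega$.

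For the identity \eqref{dtgam}, on $[0,T]\times\partial\Omega$ we have $\chi(d)=1$, so $\gamma^{ab}=\bar g^{ab}-\widetilde{\N}^a\widetilde{\N}^b$. The main step is a lemma of the form
\[
\nabla_V\widetilde{\N}_a=\lambda\widetilde{\N}_a\quad\text{on }\partial\Omega,\qquad \lambda=\tfrac12 h_{cd}\widetilde{\N}^c\widetilde{\N}^d.
\]
To prove this, I would use the formula $\widetilde{\N}_a=\partial_a\varphi/|\sd\varphi|_{\bar g}$ with $\varphi$ time-independent in Lagrangian coordinates: the numerator is Lagrangian-stationary up to connection terms that vanish because $V$ is tangent to the level set $\{\varphi=0\}$, so $\nabla_V\widetilde{\N}_a$ reduces to $\widetilde{\N}_a$ times $\nabla_V(1/|\sd\varphi|_{\bar g})$; a short computation gives the stated coefficient $\lambda$. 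Equivalently, one can argue that $V$ tangent on $\partial\Omega$ implies that the tangential part of $\nabla_V\widetilde{\N}$ vanishes, and then use $\widetilde{\N}^a\widetilde{\N}_a=1$ to identify $\lambda$. Once this is known, raising indices yields $\nabla_V\widetilde{\N}^a=-\bar g^{ac}h_{cd}\widetilde{\N}^d+\lambda\widetilde{\N}^a$, and plugging into $\nabla_V\gamma^{ab}=-\bar g^{ac}\bar g^{bd}h_{cd}-(\nabla_V\widetilde{\N}^a)\widetilde{\N}^b-\widetilde{\N}^a(\nabla_V\widetilde{\N}^b)$ and expanding $\gamma^{ac}\gamma^{bd}h_{cd}$ via $\gamma^{ab}=\bar g^{ab}-\widetilde{\N}^a\widetilde{\N}^b$ shows both sides agree.

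For the first bound in \eqref{dgam}, I would differentiate the extended formula $\gamma^{ab}=\bar g^{ab}-\chi(d)\widetilde{\N}^a\widetilde{\N}^b$. The term $\sd\bar g^{ab}$ is bounded by the assumed background regularity. The term $\chi'(d)\,\sd d\,\widetilde{\N}^a\widetilde{\N}^b$ is bounded by $C\|\chi'\|_\infty\leq C/\iota_0$, from the choice of $\chi$ supported on a layer of width $\sim\iota_0$. For $\chi(d)\,\sd\widetilde{\N}$, on $\partial\Omega$ the tangential part of $\sd\widetilde{\N}$ is by definition the second fundamental form, contributing $\|\theta\|_{L^\infty(\partial\Omega)}$, while the normal part together with derivatives of the extension is controlled in terms of $1/\iota_0$ (since the extension of $\widetilde{\N}$ via $\sd\varphi/|\sd\varphi|_{\bar g}$, or equivalently via the normal exponential map, has second derivatives bounded by the reciprocal of the normal injectivity radius). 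For the second bound, applying $\nabla_V$ to $\gamma^{ab}=\bar g^{ab}-\chi(d)\widetilde{\N}^a\widetilde{\N}^b$ produces $-h^{ab}$ together with $(\nabla_V\chi(d))\widetilde{\N}^a\widetilde{\N}^b$ and $\chi(d)\nabla_V(\widetilde{\N}^a\widetilde{\N}^b)$; the computation from part (1), extended to the interior using that $\varphi$ and the Lagrangian coordinates are time-independent, shows that each of these reduces to contractions of $h$ with $\widetilde{\N}$ and $\bar g$, whence the bound by $C\|h\|_{L^\infty(\Omega)}$.

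The main obstacle is the identification $\nabla_V\widetilde{\N}_a=\lambda\widetilde{\N}_a$ on $\partial\Omega$: it requires a clean interpretation of $\nabla_V$ acting on spatial objects in Lagrangian coordinates and careful use of the tangency $V^aN_a=0$ together with the time-independence of the defining function $\varphi$. Once this is in hand, both parts of the lemma reduce to direct algebraic expansion and bookkeeping of the extension data.
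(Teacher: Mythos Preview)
Your approach is correct and essentially complete. The paper itself does not give a proof of this lemma; it simply cites Lemma~3.11 of \cite{CL00}, so there is no ``paper's own proof'' to compare against beyond that reference. Your argument---fixing the boundary in Lagrangian coordinates so that the defining function $\varphi$ is time-independent, computing $\nabla_V\widetilde{\N}_a$ from the normalization factor alone to obtain $\lambda=\tfrac12 h_{cd}\widetilde{\N}^c\widetilde{\N}^d$, and then expanding both sides of \eqref{dtgam}---is exactly the computation behind the cited result, and your verification that the $\widetilde{\N}^a\widetilde{\N}^b$ terms match is correct. Likewise, for \eqref{dgam} your decomposition of $\sd\gamma$ into the $\chi'(d)$ piece (giving $1/\iota_0$) and the $\sd\widetilde{\N}$ piece (giving $\theta$ plus lower order) is the right structure; the only part left implicit is why the extension of $\widetilde{\N}$ into the collar has derivatives controlled by $1/\iota_0$, which in \cite{CL00} is handled by taking $\varphi$ to be the signed geodesic distance so that $\widetilde{\N}$ is parallel along normal geodesics.
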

\begin{proof}
 See Lemma 3.11 of \cite{CL00}.
\end{proof}

We will write $Vol(\Omega)$ for the volume of $\Omega$ with respect to
the metric $\sg$. The following lemma can be used to control $Vol(\Omega)$:
\begin{lemma}
  With the above definitions, on $[0, T] \times \Omega$:
 \begin{equation}
  \frac{d}{dt} \det \sg = \frac{\det \sg}{V^0} \big(\sdiv V -
  V^i\pa_i \log V^0).
  \label{dtvol}
 \end{equation}
\end{lemma}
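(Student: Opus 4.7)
The proof is a direct computation that combines Jacobi's formula for the time derivative of a determinant with the classical identity $\frac{dJ}{dt}\big|_y = J\,(\pa_i W^i)$ for the Jacobian $J(t,y) = \det(\pa x^i/\pa y^a)$ of the Lagrangian flow.

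First, I would start from the pointwise factorization
\[
\det \sg(t,y) \;=\; \det \sg_{ij}(t, x(t,y))\cdot J(t,y)^2
\]
and differentiate at fixed $y$. The chain rule applied to $\sg_{ij}(t, x(t,y))$ contributes the material-derivative piece $(\pa_t + W^k \pa_k)\det \sg_{ij}$, and the flow identity for the Jacobian contributes a further $2\det\sg\cdot\pa_i W^i$. Converting the flat partial divergence into the intrinsic covariant divergence on $\D_t$ via
\[
\pa_i W^i \;=\; \sdiv W \;-\; W^i\,\pa_i \log\!\sqrt{\det \sg},
\]
the transport terms combine so that the right-hand side collapses to a multiple of $\det \sg\cdot \sdiv W$.

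Second, I would convert from the Lagrangian velocity $W$ to the fluid velocity $V$. Since $W^i = \overline{V}^i/(-V^\tau) = V^i/V^0$, an elementary rearrangement gives
\[
\sdiv W \;=\; \frac{1}{\sqrt{\det \sg}}\,\pa_i\!\left(\sqrt{\det \sg}\,\frac{V^i}{V^0}\right) \;=\; \frac{1}{V^0}\big(\sdiv V - V^i \pa_i \log V^0\big),
\]
which immediately converts the expression into the form \eqref{dtvol}.

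The computation is routine; the only delicate point is the bookkeeping of the two sources of time dependence in $\sg(t,y)$---the Eulerian $t$-dependence of $\sg_{ij}$ at a fixed Eulerian point $x$, and the motion $x(t,y)$ of Lagrangian particles---so that the partial-divergence factor $\pa_i W^i$ coming from $dJ/dt$ combines correctly with the Christoffel-type transport term to produce the intrinsic divergence $\sdiv W$ rather than its flat coordinate version. There is no analytical obstacle beyond this algebra.
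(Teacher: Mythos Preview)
Your approach is essentially the same as the paper's: both are direct computations combining Jacobi's determinant formula with the chain rule for the Lagrangian flow, and both land on the same expression $\frac{1}{V^0}(\sdiv V - V^i\pa_i\log V^0)$ after expanding $W^i = V^i/V^0$. The only organizational difference is that you factor $\det\sg_{ab} = (\det\sg_{ij})\,J^2$ first and differentiate the factors separately (invoking the classical $dJ/dt = J\,\pa_i W^i$), whereas the paper applies Jacobi's formula directly to the pulled-back metric $\sg_{ab}$ and then expands $\frac{d}{dt}\sg_{ab}$; these are equivalent bookkeeping choices.
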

\begin{proof}
  We abuse notation
  and write $V(t,y) = V(t,x(t,y))$ with $x$ from \eqref{lagdef}.
  By the well-known formula for the derivative of the determinant:
  \begin{equation}
    \frac{d}{dt} \det \sg(t, y) = \det \sg \,\sg^{ab}(t,y) \frac{d}{dt}\sg_{ab}(t,y),
   \label{jacobi}
  \end{equation}
  \end{proof}
  and by \eqref{lagdef}, we have:
  \begin{equation}
   \frac{d}{dt} \sg_{ab}(t,y) = \bigg(\frac{d}{dt} \sg_{ij}(t,y)\bigg)
   \frac{\pa x^i(t,y)}{\pa y^a} \frac{\pa x^j(t,y)}{\pa y^b}
   + 2\sg_{ij}(t,y) \frac{\pa }{\pa y^a} \bigg(\frac{V^i(t,y)}{V^0(t,y)}\bigg)
   \frac{\pa x^j(t,y)}{\pa y^b}.
   \label{}
  \end{equation}
  The equation \eqref{jacobi} then implies:
  \begin{equation}
   \frac{d}{dt} \sg(t,y) = \frac{\det \sg(t,y)}{V^0(t,y)}
    \bigg(\pa_i V^i(t,y) - \frac{V^i(t,y)}{V^0(t,y)} \pa_i V^0(t,y)\bigg).
   \label{}
  \end{equation}

\section{Elliptic Estimates}
\label{sec:ell}

We now fix a $(0,1)$ tensor $\alpha_k$ on $\D_t$ and let $\beta_k = \beta_{Ik}
 = \sd_I^r \alpha_k$, where $\sd_I^r =
\sd_{i_1}\cdots \sd_{i_r}$.
We will also let $\beta^S_{Ik}$ and $\beta^A_{Ik}$
denote the symmetrization and anti-symmetrization of $\beta$ over
the first $r$ indices respectively:
\begin{align}
 \beta^S_{i_1\cdots i_r k} = \frac{1}{r!}\sum_{\pi \in S_r}
 \beta_{i_{pi(1)}\cdots i_{\pi(r)} k} &&
 \beta^A_{i_1\cdots i_r k} = \frac{1}{r!}\sum_{\pi \in S_r}
 (-1)^{|\pi|} \beta_{i_{\pi(1)}\cdots i_{\pi(r)} k}.
 \label{ell:symdef}
\end{align}
Here $S_r$ is the symmetric group on $r$ letters and $|\pi|$ is
the order of the permutation $\pi$. Then $\beta = \beta^S + \beta^A$
and $|\beta^S| + |\beta^A| \leq 2|\beta|$.
When $\sg$ is flat
then $\beta^S = \beta$ and $\beta^A = 0$
but in general $\beta^A$ will involve derivatives of the Riemann curvature
tensor.

We will rely on several elliptic estimates from \cite{CL00},
where it is assumed that $[\sd_i, \sd_j] = 0$. To apply these estimates
to the general case, we will
write $\beta = \beta^S + \beta^A$ and many of the results from that paper
can then be applied directly to $\beta^S$. Since we are assuming that
we have bounds \eqref{assump:rm} for the Riemann tensor, $\beta^A$ will be
lower order.

We will write:
\begin{align}
 &\sdiv \beta = \sg^{ij}\sd_i \beta_{Ij},
 &&\scurl \beta_{ij} = \nabla_i \beta_{Ij} - \nabla_j
 \beta_{Ii}.
 \label{}
\end{align}
Note that by \eqref{assump:rm} we have:
\begin{align}
 \sdiv \beta = \sd^r \sdiv \alpha + R_1\alpha,&&
 \scurl \beta = \sd^r \scurl \alpha + R_2\alpha,
 \label{ell:symident}
\end{align}
where
\begin{equation}
 |R_1\alpha| + |R_2\alpha|
  \leq C(R) \sum_{k \leq r-1} |\sd^{k}\alpha|,
 \label{}
\end{equation}
if $r \leq N$.

If $I = (i_1,\cdots, i_r), J = (j_1,\cdots j_r)$, we will write:
\begin{align}
 \sg^{IJ} = \sg^{i_1j_1}\cdots \sg^{i_rj_r}, &&
 \gamma^{IJ} = \gamma^{i_1j_1}\cdots \gamma^{i_rj_r}.
 \label{}
\end{align}

The fundamental elliptic estimates are then:
\begin{lemma}
 If $\beta$ is as above
  and the assumption \eqref{assump:bdy} holds, then:
 \begin{align}
  |\sd\, \beta|^2 &\leq C\bigg( \sg^{ij}\gamma^{k\ell} \gamma^{IJ}
  \sd_k \beta_{Ii} \sd_\ell \beta_{Jj}
  + |\sdiv \beta|^2 + |\scurl \beta|^2 + |\sd \beta^A|^2 \bigg),
  \label{ellpw}\\
  \int_{\Omega} |\sd \, \beta|^2 \dvols
  &\leq C \int_{\Omega} \big(\N^i \N^j \sg^{k\ell}\gamma^{IJ}
  \sd_k \beta_{Ii} \sd_\ell \beta_{Jj} + |\sdiv \beta|^2 +
   |\scurl \beta|^2
  + K |\beta|^2 + |\sd\beta^A|^2 \big)\dvols.
  \label{ellfund2}
 \end{align}
\end{lemma}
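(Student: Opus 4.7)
The plan is to reduce both estimates to their Euclidean analogues from \cite{CL00} (which assume $[\sd_i,\sd_j] = 0$) by writing $\beta = \beta^S + \beta^A$. The antisymmetric part $\beta^A$ encodes all the commutator corrections and, by \eqref{perm} and \eqref{assump:rm}, involves at most $r-1$ derivatives of $\alpha$; its contribution to $|\sd\beta|^2$ is therefore already accounted for by the $|\sd\beta^A|^2$ term on the right-hand sides. It thus suffices to prove both estimates with $\beta$ replaced by $\beta^S$, to which the Hodge-type arguments of \cite{CL00} apply up to curvature corrections.

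For the pointwise estimate \eqref{ellpw}, I would expand
\[
|\sd\beta^S|^2 = \sg^{kl}\sg^{IJ}\sg^{ij}\,\sd_k\beta^S_{Ii}\,\sd_l\beta^S_{Jj}
\]
and substitute $\sg^{ab} = \gamma^{ab} + \N^a\N^b$ on the outer derivative pair $(k,l)$ and on each of the $r$ pairs making up $\sg^{IJ}$. The purely tangential piece on $(k,l)$ and on $(I,J)$, keeping $\sg^{ij}$ on the $\alpha$-slot, is exactly the first term on the right of \eqref{ellpw}. Every other piece contracts $\N$ with at least one derivative index: by the symmetry of $\beta^S$ in its first $r$ slots, any normal contraction on an $I$-slot can be moved to a fixed position and then rewritten, using $\scurl\beta$ to reorder derivatives and \eqref{perm} to commute them, as the full contraction $\sg^{ij}\sd_i\beta_{Ij} = \sdiv\beta$ plus curvature remainders that are absorbed by $|\sd\beta^A|^2$ via \eqref{ell:symident}. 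The piece where $\N^k\N^l$ appears on the outer derivatives is handled in the same way.

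For the integrated estimate \eqref{ellfund2}, the goal is to trade the tangential $\gamma^{kl}$ weighting on the outer derivatives for the $\N^i\N^j$ weighting on the $\alpha$-slot. The mechanism is integration by parts in tangential directions:
\[
\int_\Omega \gamma^{kl}\,\phi_k\psi_l\,\dvols
= -\int_\Omega \phi\,\sd_k\bigl(\gamma^{kl}\psi_l\bigr)\dvols,
\]
with no boundary contribution because $\gamma^{kl}\N_k = 0$ on $\pa\Omega$. Applying this with $\phi$ and $\psi$ built from $\sd\beta^S$, and then using $\gamma^{kl} = \sg^{kl} - \N^k\N^l$, converts the $\gamma^{kl}$ weighting into a full $\sg^{kl}$ (which contracts in the $\beta^S$ indices to reproduce $\sdiv\beta$) minus an $\N^k\N^l$ weighting (which, after relabeling, yields the desired first term on the right of \eqref{ellfund2}). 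Derivatives falling on $\gamma$ are bounded by $K$ via \eqref{dgam}, producing the $K|\beta|^2$ error, while the commutators contribute curvature terms absorbed by $|\sd\beta^A|^2$.

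The main obstacle is the bookkeeping of commutator and curvature corrections: each permutation of derivatives produces a Riemann tensor factor via \eqref{perm}, and one must verify that every such correction sits at one derivative lower than $\beta$ itself, so that it fits into $|\sd\beta^A|^2$ or $K|\beta|^2$ rather than forcing a loss of derivatives on the leading $|\sd\beta|^2$ on the left. This is possible precisely because $\beta^A$ is defined as the full antisymmetrization of $\beta$, so all curvature commutators fall into it, and because the uniform bound \eqref{assump:rm} keeps these terms at the correct order.
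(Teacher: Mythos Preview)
Your approach is essentially the same as the paper's: decompose $\beta = \beta^S + \beta^A$ and apply the proof of Lemma~5.5 from \cite{CL00} to the symmetric part $\beta^S$, with the antisymmetric part producing the $|\sd\beta^A|^2$ remainders. One small clarification: once you pass to $\beta^S$, the argument from \cite{CL00} is purely algebraic (for \eqref{ellpw}) and integration-by-parts (for \eqref{ellfund2}) and requires no further curvature corrections---all curvature is already absorbed into $\beta^A$, so the phrase ``up to curvature corrections'' is unnecessary.
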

\begin{proof}
  Write $\beta = \beta_S + \beta_A$ and apply the proof of lemma 5.5 from
  \cite{CL00} to $\beta_S$.
\end{proof}

\begin{prop}
 With $\beta$ as defined above, if \eqref{assump:bdy} and
 \eqref{assump:rm} holds with $N \geq 1$  and $|Ric(g)| \leq R'$, then:
 \begin{equation}
  ||\beta||_{L^2(\pa \Omega)}^2 \leq C \big( ||\sd \beta||_{L^2(\Omega)}
  + K||\beta||_{L^2(\Omega)}\big)||\beta||_{L^2(\Omega)},\label{ellbdy1}
\end{equation}
\begin{multline}
  ||\beta||_{L^2(\pa \Omega)}^2 \leq C ||\Pis \beta||^2_{L^2(\pa \Omega)}
  + ||\beta^A||_{L^2(\pa \Omega)}^2\\
  + C\big( ||\sdiv \beta||_{L^2(\Omega)} + ||\scurl \beta||_{L^2(\Omega)}
  + K||\beta||_{L^2(\Omega)}\big)||\beta||_{L^2(\Omega)}\label{ellbdy2},
\end{multline}
 and
 \begin{multline}
  ||\sd \,\beta||_{L^2(\Omega)}^2 \leq C||\sd \beta||_{L^2(\pa \Omega)}
  ||\beta||_{L^2(\pa \Omega)} \\+C\big(||\sdiv \beta||_{L^2(\Omega)}
  + ||\scurl \beta||_{L^2(\Omega)} \big)^2
  + ||Ric(\sg)||_{L^\infty(\Omega)}||\beta||_{L^2(\Omega)}^2  \label{ellint1},
 \end{multline}
 \begin{multline}
  ||\sd \,\beta||_{L^2(\Omega)}^2
  \leq C||\Pis \sd \,\beta||_{L^2(\pa \Omega)}
  ||\Pis \N \cdot \beta||_{L^2(\pa \Omega)}\\
  + C(||\sdiv \beta||_{L^2(\Omega)} + ||\scurl \beta||_{L^2(\Omega)}
  + (K + R)||\beta||_{L^2(\Omega)}^2) +
  ||\sd \beta^A||_{L^2(\Omega)}^2.
  \label{ellint2}
 \end{multline}
 Here, $Ric(\sg)$ denotes the Ricci curvature of $(\Omega, \sg)$
 and $\N\cdot \beta = \N^\mu \beta_{I\mu}$.
\end{prop}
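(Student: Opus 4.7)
The plan is to establish all four estimates by adapting the corresponding proofs from \cite{CL00} (which are carried out there under the assumption $[\sd_i,\sd_j]=0$) via the decomposition $\beta = \beta^S + \beta^A$. The antisymmetric part $\beta^A$ is a sum of commutators of covariant derivatives acting on lower-order derivatives of $\alpha$, so by the curvature bound \eqref{assump:rm} it is controlled by $\sum_{k\leq r-2}|\sd^k\alpha|$ and enters only as explicit remainder terms on the right-hand side. The symmetric part $\beta^S$ plays the role of the ``flat-commutator'' tensor in \cite{CL00}.

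For \eqref{ellbdy1}, I would write $|\beta|^2 = \N_i\N^i|\beta|^2$ on $\pa\Omega$ and apply the divergence theorem to rewrite the boundary integral as $\int_\Omega \sd_i(\N^i|\beta|^2)\dvols$. One term is bounded by $K\|\beta\|_{L^2(\Omega)}^2$ using $|\sd\N|\leq K$ from \eqref{assump:bdy}, and the other is $2\int_\Omega \N^i\sg^{IJ}\sg^{jk}(\sd_i\beta_{Ij})\beta_{Jk}$, which by Cauchy--Schwarz gives the desired $\|\sd\beta\|_{L^2(\Omega)}\|\beta\|_{L^2(\Omega)}$ bound.

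The main obstacle is \eqref{ellbdy2}, where the full boundary norm is split into tangential ($\Pis\beta$) and normal ($\N\cdot\beta$) parts, with the normal part controlled by $\sdiv\beta$ and $\scurl\beta$. The idea is to write $|\beta|^2 = |\Pis\beta|^2 + \gamma^{IJ}(\N^i\beta_{Ii})(\N^j\beta_{Jj})$ modulo lower-order terms involving $\beta^A$, and apply the divergence theorem to the normal piece. One then integrates by parts twice: the first integration produces $\sdiv\beta$ and boundary contributions, and the second, after antisymmetrizing, produces $\scurl\beta$; the error from commuting $\sd$ past $\Pis$ and $\N$ contributes $K\|\beta\|_{L^2(\Omega)}^2$ via \eqref{assump:bdy}. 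The Ricci bound $|Ric(g)|\leq R'$ is needed to absorb the commutator that appears when one swaps a tangential derivative with a normal derivative. This is essentially the argument of Proposition 5.8 of \cite{CL00}, but one has to track how many covariant derivatives actually land on the curvature versus on $\alpha$ so that $\beta^A$ indeed appears only in its claimed form.

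For the interior estimates \eqref{ellint1}--\eqref{ellint2}, I would begin with the identity
\begin{equation}
\|\sd\beta\|_{L^2(\Omega)}^2 = \int_\Omega \sg^{ij}\sg^{k\ell}\sg^{IJ}(\sd_i\beta_{Ik})(\sd_j\beta_{J\ell})\,\dvols
\end{equation}
and integrate by parts in the $i$ index. The boundary term is $\int_{\pa\Omega}\N^i(\sd_i\beta_{Ik})\beta_J{}^k\sg^{IJ}$, which gives the $\|\sd\beta\|_{L^2(\pa\Omega)}\|\beta\|_{L^2(\pa\Omega)}$ factor of \eqref{ellint1}; splitting $\sd_i$ into $\Pis\sd_i$ and $\N_i\sd_\N$ and then using the divergence/curl of $\beta$ to eliminate the $\N\cdot\sd\beta$ piece on $\pa\Omega$ upgrades this to the $\|\Pis\sd\beta\|_{L^2(\pa\Omega)}\|\Pis\N\cdot\beta\|_{L^2(\pa\Omega)}$ form of \eqref{ellint2}. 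The interior term produces $-\int_\Omega \sg^{ij}(\sd_j\sd_i\beta_{Ik})\beta_J{}^k\sg^{IJ}\sg^{k\ell}$; after regrouping the two derivatives one obtains a divergence part, a curl part, and a commutator $(\sd_j\sd_i-\sd_i\sd_j)\beta$ which by the Ricci identity yields $Ric(\sg)\cdot\beta$. Finally, \eqref{ell:symident} collects the $\beta^A$ contribution into the stated remainder.
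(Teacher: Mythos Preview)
Your treatment of \eqref{ellbdy1}, \eqref{ellbdy2}, and \eqref{ellint1} matches the paper's approach: Stokes' theorem for the trace estimate, the $\beta=\beta^S+\beta^A$ reduction to \cite{CL00} for the second, and the Hodge identity $\sDelta\beta_k=\sd_k\sdiv\beta+\sd^i\scurl\beta_{ik}+Ric_{k\ell}\beta^\ell$ followed by two integrations by parts for the third. (Minor point: for \eqref{ellbdy2} the relevant reference in \cite{CL00} is the proof of (5.20), i.e.\ Lemma 5.6, not Proposition 5.8.)

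The gap is in your approach to \eqref{ellint2}. Starting from the boundary term of \eqref{ellint1}, namely $\int_{\pa\Omega}(\sd_\N\beta_{Ik})\beta_J{}^k\sg^{IJ}$, and then ``splitting $\sd_i$ into $\Pis\sd_i$ and $\N_i\sd_\N$'' does not produce the desired structure: the derivative in that boundary term is already purely normal, and any pointwise rewriting of $\sd_\N\beta_k$ in terms of tangential derivatives forces a $\scurl\beta$ contribution \emph{on the boundary}, where you have no control (your $\sdiv$ and $\scurl$ bounds are only in $L^2(\Omega)$). Pushing that boundary curl back into the interior by Stokes reintroduces $\sDelta\beta$ and becomes circular.

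The paper proceeds differently. It does not upgrade the boundary term of \eqref{ellint1}; instead it takes \eqref{ellfund2} as the starting point, which already bounds $\|\sd\beta\|_{L^2(\Omega)}^2$ by an \emph{interior} integral of $\N^i\N^j\sg^{k\ell}\gamma^{IJ}(\sd_k\beta_{Ii})(\sd_\ell\beta_{Jj})$ plus div/curl terms. One then writes $\N^i\N^j\sg^{k\ell}=\N^i\N^k\sg^{j\ell}+A^{ijk\ell}$ with $A^{ijk\ell}=\N^i\N^j\sg^{k\ell}-\N^i\N^k\sg^{j\ell}$. The point is that $A$ is antisymmetric in $j,k$, so after one integration by parts the interior term $A^{ijk\ell}\gamma^{IJ}\beta_{Ii}\sd_k\sd_\ell\beta_{Jj}$ becomes a Riemann curvature contribution (absorbed by $R\|\beta\|^2$), while on $\pa\Omega$ one computes $\N_kA^{ijk\ell}=-\N^i\gamma^{j\ell}$, which is exactly what produces the factor $\|\Pis\sd\beta\|_{L^2(\pa\Omega)}\|\Pis(\N\cdot\beta)\|_{L^2(\pa\Omega)}$. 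The $\N^i\N^k\sg^{j\ell}$ piece contributes only $\sdiv\beta$-type interior terms. This antisymmetry trick is the missing idea in your outline.
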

\begin{proof}
  This is a straightforward modification of Lemma 5.6 from
  \cite{CL00} and we just indicate the main points.
  First, \eqref{ellbdy1} follows by Stokes' theorem:
 \begin{equation}
  \int_{\pa \Omega} \gr^{\mu\nu}\sg^{IJ} \beta_{I\mu} \beta_{J\nu} \dvolbdy
  = \int_{\Omega} \sd_k\big(\N^k \gr^{\mu\nu} \sg^{IJ} \beta_{I\mu}
  \beta_{J\nu}) \dvol.
  \label{}
 \end{equation}
 The estimate \eqref{ellbdy2} is proved by writing $\beta = \beta^S + \beta^A$
 and applying the proof of (5.20) from \cite{CL00} to $\beta^S$.

 To prove \eqref{ellint1} we use the ``Hodge'' identity:
 \begin{align}
  \sDelta \beta_k = \sd^i\sd_i \beta_k
  &= \sd^i \sd_k \beta_i + \sd^i(\sd_i \beta_k - \sd_k \beta_i)\\
  &= \sd_k \sdiv \beta + \sd^i \scurl \beta_{ik}
  + Ric_{k\ell}\beta^\ell,
  \label{}
 \end{align}
 and then integrate by parts twice.

 Finally, to prove \eqref{ellint2}, we start with \eqref{ellfund2}
 and write $\N^i \N^j g^{k\ell}  = \N^i \N^k g^{j\ell}
 + A^{ijk\ell}$ with $A^{ijk\ell} =
 \N^i \N^j g^{k\ell} - \N^i \N^k g^{j\ell}$. We now modify
 the proof of (5.11) from \cite{CL00}. Note that:
 \begin{multline}
  A^{ijk\ell}\gamma^{IJ} \sd_k\beta_{Ii} \sd_\ell \beta_{Jj}\\
  = \sd_k (A^{ijk\ell} \gamma^{IJ} \beta_{Ii} \sd_{\ell}\beta_{Jj})
  - \sd_k(A^{ijk\ell})\gamma^{IJ}\beta_{Ii}\sd_\ell \beta_{Jj}
  - A^{ijk\ell} \gamma^{IJ}\beta_{Ii} \big(\Rm_{k\ell j}^m \beta_{Jm}\big),
  \label{ellibp}
 \end{multline}
 where $\Rm$ is the Riemann curvature of $(\Omega, \sg)$, and
 in the last term we used that $A^{ijk\ell} = -A^{ikj\ell}$. Also
 note that on $\pa \Omega$:
 \begin{equation}
  \N_k A^{ijk\ell}
  = \N_k(\N^i\N^j g^{k\ell} - \N^i\N^k g^{j\ell})
  = -\N^i \gamma^{j\ell}.
  \label{}
 \end{equation}
 Integrating \eqref{ellibp} over $\Omega$, using Stokes' theorem,
  this identity gives:
 \begin{multline}
  \int_{\Omega} A^{ijk\ell}\gamma^{IJ} \sd_k\beta_{Ii}
  \sd_\ell \beta_{Jj}\dvols\\
  \leq ||\Pis \sd\beta||_{L^2(\pa \Omega)}
  ||\Pis (\N\cdot \beta)||_{L^2(\pa \Omega)}
  +\big(K ||\sd \beta||_{L^2(\Omega)} + R||\beta||_{L^2(\Omega)}\big)
  ||\beta||_{L^2(\Omega)}\\
  + C\big(||\sdiv \beta||_{L^2(\Omega)}^2 + ||\scurl\beta||_{L^2(\Omega)}^2
  + K ||\beta||_{L^2(\Omega)}^2\big).
  \label{}
 \end{multline}
 Combining this with \eqref{ellfund2} then implies \eqref{ellbdy2}.
 \end{proof}

 The above estimates applied to $\beta = \sd^\ell q$ for a function $q$
 give estimates that
 we will need to control solutions to the wave equation in the next section:
\begin{prop}
 Suppose that the assumptions \eqref{assump:bdy} hold and \eqref{assump:rm} holds
 with $N \geq r$.
 If $q: \Omega \to\R$ and $r \geq 2$:
 \begin{multline}
  ||\sd^r q||_{L^2(\pa \Omega)}^2 + ||\sd^r q||_{L^2(\Omega)}^2
  \leq C ||\Pis \sd^r q||_{L^2(\pa \Omega)}^2 \\+ C(K, R)
  \bigg(
  ||\sDelta q||_{H^{r-1}(\Omega)}^2 + ||\sd q||_{L^2(\Omega)}^2
  + ||q||_{L^2(\Omega)}^2 + ||q||_{L^2(\pa \Omega)}^2\bigg).
  \label{ellbdyfn1}
\end{multline}
Furthermore, for any $\delta > 0$:
\begin{multline}
  ||\sd^r q||_{L^2(\Omega)}^2 + ||\sd^{r-1} q||_{L^2(\pa \Omega)}^2
  \leq C \delta ||\Pis \sd^r q||_{L^2(\pa \Omega)}^2\\
  + C(K,R,1/\delta) \bigg(||\sDelta q||_{H^{r-2}(\Omega)}^2 +
   ||\sd q||_{L^2(\Omega)}^2 + ||q||_{L^2(\Omega)}\bigg).
  \label{ellbdyfn2}
\end{multline}
\end{prop}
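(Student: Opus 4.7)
The plan is to apply the tensor estimates of the previous proposition to $\beta$ obtained by differentiating $\alpha_k = \sd_k q$. Since $\alpha$ is a gradient, $\scurl \alpha = 0$ and $\sdiv \alpha = \sDelta q$, so the commutator identity \eqref{ell:symident} gives
\begin{align*}
 \sdiv(\sd^{s-1}\alpha) = \sd^{s-1}\sDelta q + R_1\sd q,
 \qquad
 \scurl(\sd^{s-1}\alpha) = R_2\sd q,
\end{align*}
with $||R_j\sd q||_{L^2(\Omega)} \leq C(R)\sum_{k \leq s-2}||\sd^{k+1}q||_{L^2(\Omega)}$, and the antisymmetric part $(\sd^{s-1}\alpha)^A$ likewise reduces to a Riemann-tensor contraction against $\sd^{\leq s-2}\sd q$. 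The proof proceeds by induction on $r$; the base case $r = 2$ is immediate from \eqref{ellbdy2} and \eqref{ellint1}, since the error terms then only involve $||q||_{L^2}$, $||\sd q||_{L^2}$ and the boundary traces already present on the right-hand side of the statement.

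For the first inequality, I apply \eqref{ellbdy2} with $\beta = \sd^r q$ (that is, $s = r$) to bound $||\sd^r q||_{L^2(\pa\Omega)}^2$, and \eqref{ellint1} with $\beta = \sd^{r-1}q$ (so that $\sd\beta = \sd^r q$) to bound $||\sd^r q||_{L^2(\Omega)}^2$. Adding the two yields control by $C||\Pis\sd^r q||_{L^2(\pa\Omega)}^2$ plus a collection of bilinear error terms; among these the top-order cross-terms of the form $||\sd^r q||_{L^2(\pa\Omega)}\cdot||\sd^{r-1}q||_{L^2(\pa\Omega)}$ and $||\sd^r q||_{L^2(\Omega)}\cdot(\text{lower})$ are handled via Young's inequality, absorbing an $\varepsilon$-multiple of each top-order quantity into the left-hand side. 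The surviving lower-order quantities $||\sDelta q||_{H^{r-1}(\Omega)}$ (from $\sdiv\beta$), $||\sd^{r-1}q||_{L^2(\pa\Omega)}$, and $||\sd^{\leq r-1}q||_{L^2(\Omega)}$ are then reduced to the stated lowest-order norms via the inductive hypothesis at level $r - 1$, the constants picking up factors $C(K,R)$ through \eqref{assump:bdy} and \eqref{assump:rm}.

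The second inequality is obtained by applying \eqref{ellint2} to $\beta = \sd^{r-1}q$: this controls $||\sd^r q||_{L^2(\Omega)}^2$ by the tangential boundary product $||\Pis\sd^r q||_{L^2(\pa\Omega)}\cdot||\Pis\N\cdot \sd^{r-1}q||_{L^2(\pa\Omega)}$ and the same interior errors as before. Young's inequality with parameter $\delta$ produces the $C\delta\,||\Pis\sd^r q||_{L^2(\pa\Omega)}^2$ term from the statement together with $(C/\delta)\,||\sd^{r-1}q||_{L^2(\pa\Omega)}^2$. To lift $||\sd^{r-1}q||_{L^2(\pa\Omega)}^2$ onto the left, I combine with \eqref{ellbdy1} applied to $\sd^{r-1}q$, giving $||\sd^{r-1}q||_{L^2(\pa\Omega)}^2 \leq C(||\sd^r q||_{L^2(\Omega)} + K||\sd^{r-1}q||_{L^2(\Omega)})||\sd^{r-1}q||_{L^2(\Omega)}$; another Young inequality absorbs the $||\sd^r q||_{L^2(\Omega)}$ factor into the already-present top-order interior term, and the remaining $||\sd^{r-1}q||_{L^2(\pa\Omega)}$ that survives on the right is then dispatched by the first inequality at level $r - 1$, which only costs $||\sDelta q||_{H^{r-2}(\Omega)}$, as required.

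The main obstacle is the bookkeeping of the induction. Each application of the tensor estimates produces interior and boundary error terms at intermediate derivative levels, and one must verify that these can be iteratively absorbed by invoking the two inequalities at levels $r - 1, r - 2, \ldots$ without the $C(K, R)$ dependence deteriorating, while simultaneously tracking the commutator remainders $R_1\sd q, R_2\sd q$ and the antisymmetric parts $\beta^A$ (whose Riemann-curvature content is controlled by \eqref{assump:rm}). The interaction between the projection $\Pis$ and $\sd$ on the boundary, which introduces the second fundamental form $\theta$ (bounded by $K$ via \eqref{assump:bdy}), is where one must be most careful to keep all constants in the required form.
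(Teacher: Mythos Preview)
Your proposal is correct and follows essentially the same approach as the paper. The paper's own proof simply cites Proposition~5.8 of \cite{CL00} and notes that the only extra ingredients in the curved setting are the pointwise bound $|(\sd^r q)^A| \leq R\sum_{k\leq r-2}|\sd^k q|$ and the commutator estimate $||\sdiv\sd^r q|| + ||\scurl\sd^r q|| \leq ||\sd^{r-1}\sDelta q|| + R||q||_{H^{r-2}}$; your write-up spells out exactly how the previous proposition's estimates \eqref{ellbdy1}--\eqref{ellint2} are combined (with the correct choices of $\beta$, the Young-inequality absorptions, and the induction on $r$) to carry this through.
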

\begin{proof}
This is a simple modification of the proof of Proposition 5.8
of \cite{CL00}, which relies on the
previous proposition. The only ingredients needed are that,
  by \eqref{assump:rm},
  \begin{equation}
   |(\sd^r q)^A|
   \leq R \sum_{k \leq r-2} |\sd^k q|,
   \label{}
  \end{equation}
  and that by \eqref{ell:symident} we also have:
  \begin{equation}
   ||\sdiv \sd^{r} q||_{L^2(\Omega)}
   + ||\scurl \, \sd^r q|||_{L^2(\Omega)}
   \leq ||\sd^{r-1}\sDelta q||_{L^2(\Omega)} + R ||q||_{H^{r-2}(\Omega)}
   \label{}
  \end{equation}

  \end{proof}

  We will also need the following estimate
  for $|| \Pis \sd^rq||_{L^2(\pa \Omega)}$ when $\nas q = 0$
  on $\pa \Omega$, which is a simple modification of
  Proposition 5.9 from \cite{CL00}:
  \begin{prop}
    \label{projprop}
    Suppose that $|\theta| + 1/\iota_1 \leq K$.
    If $\nas q = 0$ on $\pa \Omega$, then for $m = 0,1$:
    \begin{multline}
     ||\Pis \sd^r q||_{L^2(\pa \Omega)}
     \leq C(K,R)\bigg( ||(\nas^{r-2} \theta) (\sd_\N q)||_{L^2(\pa \Omega)}
     + \sum_{k = 1}^{r-1} ||\sd^{r-k} q||_{L^2(\pa \Omega)}\\
     + \big(||\theta||_{L^\infty(\pa \Omega)} + ||\theta||_{H^{r-3}(\pa\Omega)}
     \big) \sum_{k = 0}^{r-1} ||\sd^k q||_{L^2(\pa \Omega)}\bigg),
     \label{projest}
    \end{multline}
    and if $r > 3$, for any $\delta > 0$:
    \begin{equation}
     ||\Pis \sd^{r-1} q||_{L^2(\pa \Omega)} \leq
     \delta||\sd^{r-1} q||_{L^2(\pa \Omega)} +
     C(1/\delta, K, ||\theta||_{H^{r-3}(\pa \Omega)})
     \sum_{k = 0}^{r-2} ||\sd^k q||_{L^2(\pa \Omega)}.
     \label{}
    \end{equation}

    In addition, if $|\sd_\N q| \geq \delta > 0$ and $|\sd_\N q| \geq 2\delta
    ||\sd_\N q||_{L^\infty(\pa \Omega)}$ then:
    \begin{multline}
     ||\nas^{r-2} \theta||_{L^2(\pa \Omega)}
     \leq C(1/\delta, K, R) \bigg(||\Pis \sd^r q||_{L^2(\pa \Omega)}
     + \sum_{k =1}^{r-1}
     ||\sd^{r-k}q||_{L^2(\pa \Omega)}\\
     +  \big(||\theta||_{L^\infty(\pa \Omega)}
     + ||\theta||_{H^{r-3}(\pa \Omega)}\big)
     \sum_{k \leq r-1} ||\sd^k q||_{L^2(\pa \Omega)}\bigg),
     \label{projtheta}
    \end{multline}
    where the last lines in \eqref{projtheta}
    and \eqref{projest} are not present if $r \leq 4$.
  \end{prop}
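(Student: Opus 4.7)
The plan is to establish, on $\pa\Omega$, a pointwise algebraic identity of the schematic form
\begin{equation}
(\Pis\sd^r q)_{i_1\cdots i_r} = (\nas^{r-2}\theta)_{i_1\cdots i_{r-1};i_r}\,\sd_N q + (\text{error}),
\end{equation}
where the error is a finite sum of contractions of $\nas^a\theta$ (with $a\leq r-3$) against $\sd^b q$ (with $b\leq r-1$), plus lower-order curvature contributions, and then take $L^2(\pa\Omega)$ norms. The base case $r=2$ is the identity $(\Pis\sd^2 q)_{ij} = \theta_{ij}\sd_N q$ derived from $\nas q=0$ exactly as in the introduction to the paper: differentiate $\nas_j q=0$ tangentially, write $\sd_i\Pis_j^k = -\sd_i(\chi \N^k \N_j)$, and restrict to $\pa\Omega$ where only the $(\sd_i\N_j)\N^k = \theta_{ij}\N^k$ contribution survives. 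To promote this to general $r$, I induct on $r$ by applying $\Pis\sd$ to the identity at level $r-1$, commuting $\sd$ past $\Pis$ (where $\sd\Pis$ reduces on the boundary to $\theta$ times $\N$ by the base case) and past itself (producing curvature commutators bounded via \eqref{assump:rm}), and expanding $\nas\bigl((\nas^{r-2}\theta)\,\sd_N q\bigr)$ by Leibniz.

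To derive \eqref{projest}, I take $L^2(\pa\Omega)$ of the identity: the leading term becomes exactly $||(\nas^{r-2}\theta)\sd_N q||_{L^2(\pa\Omega)}$, while the intermediate error terms of the form $(\nas^a\theta)(\sd^{r-a}q)$ are handled by pairing lower-order factors in $L^\infty$ against higher-order ones in $L^2$, using the Sobolev embedding on the $2$-dimensional manifold $\pa\Omega$ to absorb intermediate derivative counts into $||\theta||_{L^\infty(\pa\Omega)}+||\theta||_{H^{r-3}(\pa\Omega)}$ and the stated $\sum_k ||\sd^{r-k}q||_{L^2(\pa\Omega)}$ bucket. The estimate for $r-1$ derivatives uses the same identity one step earlier, followed by a Gagliardo--Nirenberg-type interpolation on $\pa\Omega$ to produce the tunable small constant $\delta$ in front of $||\sd^{r-1}q||_{L^2(\pa\Omega)}$. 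For \eqref{projtheta}, I rearrange the identity and divide pointwise by $\sd_N q$: the hypothesis $|\sd_N q|\geq\delta$ legalizes the division, while the stronger normalization $|\sd_N q|\geq 2\delta\,||\sd_N q||_{L^\infty(\pa\Omega)}$ ensures the resulting $L^2$ inequality has a constant depending only on $\delta$ rather than on the size of $\sd_N q$ itself.

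The main obstacle is the combinatorial bookkeeping in the inductive step: verifying that every mixed term $(\nas^a\theta)(\sd^b q)$ produced by the product rule and the various commutators falls into exactly one of the two stated categories (the $K$-weighted sum of $||\sd^{r-k}q||_{L^2(\pa\Omega)}$ or the product of $||\theta||_{L^\infty}+||\theta||_{H^{r-3}(\pa\Omega)}$ with the lower-order $\sd^k q$ sum), and in particular that no term of order $\nas^{r-2}\theta$ other than the explicit leading one appears. Curvature corrections from $[\sd_i,\sd_j]$ and from $\sd^k\Pis$ contribute factors of $\nabla^j Rm$ with $j\leq r-2$ that are absorbed directly into the constant $C(K,R)$ by \eqref{assump:rm}. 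Once the identity is in hand the inequalities themselves follow by routine $L^2$--$L^\infty$ estimation, as in Proposition 5.9 of \cite{CL00}; the only new features relative to that reference are the curvature commutator corrections and the fact that our $\Pis$ is the extended projection of \eqref{extPidef} which coincides with the boundary projection on $\pa\Omega$ where the identity lives.
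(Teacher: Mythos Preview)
Your proposal is correct and follows essentially the same route as the paper: both reduce to the identity $\Pis\sd^r q = (\nas^{r-2}\theta)\sd_N q + \text{lower order}$ from Proposition~5.9 of \cite{CL00}, with curvature corrections. The one organizational difference is that the paper handles the curvature commutators by splitting $\sd^r q = (\sd^r q)^S + (\sd^r q)^A$ into its symmetric and antisymmetric parts over the $r$ indices, applying the flat-space argument of \cite{CL00} verbatim to $(\sd^r q)^S$ (which by construction behaves as if $[\sd_i,\sd_j]=0$), and then bounding $(\sd^r q)^A$ separately as a lower-order term via \eqref{perm}; you instead carry the commutator corrections through the induction step by step. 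The paper's decomposition is a bit more modular---it lets you invoke \cite{CL00} as a black box rather than reopening its proof---while your approach is more self-contained but requires the careful bookkeeping you flag as the main obstacle.
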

    \begin{proof}
     Proposition 5.9 of \cite{CL00} is stated for
     $q = 0$ on $\pa\Omega$ but what is
     actually used is that $\nas^r q = 0$ (see inequality
     (4.5) there).
     As in the previous lemmas, we can apply the proof that proposition
     to $(\sd^r q)^S$, and
     bound $||(\sd^r q)^A||_{L^2(\pa \Omega)}$ by \eqref{perm}.
     The estimate \eqref{projtheta} is similar.
    \end{proof}

    The final result we need is a
    version of
    Proposition 5.8 from \cite{CL00}:
    \begin{prop}
      \label{elllotbdy}
      For any $0 \leq r \leq 4$, if $\sd q = 0$ on $\pa \Omega$,
      then:
      \begin{multline}
       ||\sd^{r-1}q||_{L^2(\pa\Omega)}
       \leq C\big( ||(\nas^{r-3} \theta)(\sd_\N q)||_{L^2(\pa\Omega)}
       + ||\sd^{r-2}\sDelta q||_{L^2(\Omega)})\\
       + C(K, Vol(\Omega), R, ||\theta||_{H^{r-4}(\pa \Omega)}
       \bigg(||\sd_N q||_{L^\infty(\pa \Omega)} +
        ||\sDelta q||_{H^{r-3}(\Omega)}\bigg),
       \label{elllotbdy1}
      \end{multline}
      and if $r > 3$:
      \begin{multline}
       ||\sd^{r-1} q||_{L^2(\pa \Omega)} + ||\sd q||_{L^\infty(\pa \Omega)}
       \leq C ||\sd^{r-2} \sDelta q||_{L^2(\Omega)}\\
       + C(K, Vol(\Omega), R, ||\theta||_{L^2(\pa \Omega)} ,
       ..., ||\nas^{r-3} \theta||_{L^2(\pa \Omega)})
        ||\sDelta q||_{H^{r-3}(\Omega)}.
       \label{elllotbdy2}
      \end{multline}

    \end{prop}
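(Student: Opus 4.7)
The plan is to reduce Proposition \ref{elllotbdy} to the elliptic boundary estimates \eqref{ellbdyfn1}--\eqref{ellbdyfn2} combined with the projection bound \eqref{projest}, using induction on $r$ to handle the lower-order pieces. The hypothesis ``$\sd q = 0$ on $\pa\Omega$'' forces $\nas q = 0$ on $\pa\Omega$, so Proposition \ref{projprop} applies verbatim to $\sd^{r-1}q$.

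First I would apply \eqref{ellbdyfn1}, with $r$ replaced by $r-1$, to $q$:
\begin{equation*}
  \|\sd^{r-1}q\|_{L^2(\pa\Omega)}^2 + \|\sd^{r-1}q\|_{L^2(\Omega)}^2
  \leq C \|\Pis \sd^{r-1} q\|_{L^2(\pa\Omega)}^2 + C(K,R)\bigl(\|\sDelta q\|_{H^{r-2}(\Omega)}^2 + \text{l.o.t.}\bigr).
\end{equation*}
Next I invoke \eqref{projest} applied to $\sd^{r-1}q$ to replace the tangentially projected term:
\begin{equation*}
  \|\Pis \sd^{r-1} q\|_{L^2(\pa\Omega)} \leq C(K,R)\bigl( \|(\nas^{r-3}\theta)\sd_{\N}q\|_{L^2(\pa\Omega)} + \text{lower-order boundary terms in } \sd^j q\bigr),
\end{equation*}
where the ``lower-order boundary terms'' are $\sum_{k\geq 1}\|\sd^{r-1-k}q\|_{L^2(\pa\Omega)}$ together with $(\|\theta\|_{L^\infty}+\|\theta\|_{H^{r-4}(\pa\Omega)})\sum \|\sd^k q\|_{L^2(\pa\Omega)}$. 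Combining the two inequalities gives \eqref{elllotbdy1} modulo the lower-order norms $\|\sd^{r-1-k}q\|_{L^2(\pa\Omega)}$ for $k\geq 1$.

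To dispose of those lower-order norms I would induct on $r$. For the base cases $r = 1,2$ the claim is essentially the trace inequality together with \eqref{ellbdyfn1}. Assuming the statement for all values up to $r-1$, each boundary norm $\|\sd^{r-1-k}q\|_{L^2(\pa\Omega)}$ with $k\geq 1$ is bounded (by the inductive hypothesis applied at level $r-k \leq r-1$) by $\|\sd^{r-k-2}\sDelta q\|_{L^2(\Omega)}$ plus constants depending on $K, \text{Vol}(\Omega), R$ and $\|\theta\|_{H^{r-k-4}(\pa\Omega)}$ times $\|\sd_{\N}q\|_{L^\infty} + \|\sDelta q\|_{H^{r-k-3}(\Omega)}$; since $r-k \leq r-1$, these are all absorbed into the right-hand side of \eqref{elllotbdy1}. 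The factor $(\nas^{r-3}\theta)\sd_{\N}q$ is treated as top order and kept explicit because in our application $\|\nas^{r-3}\theta\|_{L^2(\pa\Omega)}$ need not be controlled a priori; only $\|\theta\|_{H^{r-4}(\pa\Omega)}$ appears in the constants, which is consistent with the statement.

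For \eqref{elllotbdy2}, when $r > 3$ the tangential dimension of $\pa\Omega$ is two, so the Sobolev embedding $H^{r-2}(\pa\Omega)\hookrightarrow L^\infty(\pa\Omega)$ gives $\|\sd q\|_{L^\infty(\pa\Omega)} \leq C(K)\|\sd q\|_{H^{r-2}(\pa\Omega)}$, and the right-hand side is controlled by repeated application of \eqref{elllotbdy1}. At this stage the factor $\|(\nas^{r-3}\theta)\sd_{\N}q\|_{L^2(\pa\Omega)}$ is bounded by $\|\nas^{r-3}\theta\|_{L^2(\pa\Omega)}\|\sd_{\N}q\|_{L^\infty(\pa\Omega)}$, whence the $\|\nas^{r-3}\theta\|_{L^2(\pa\Omega)}$ dependence in the constant of \eqref{elllotbdy2}. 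The main obstacle is bookkeeping: making sure that at each induction step the constants depend only on the claimed lower-order norms of $\theta$ and that the top-order piece $\nas^{r-3}\theta \cdot \sd_{\N}q$ never gets paired with a top-order derivative of $q$ that cannot be absorbed; this is precisely why the boundary term in \eqref{projest} is isolated before invoking the inductive hypothesis.
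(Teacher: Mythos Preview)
Your approach is essentially the same as the paper's: combine \eqref{ellbdyfn1} (at level $r-1$) with the projection estimate \eqref{projest}, and then iterate to absorb the lower-order boundary norms $\|\sd^{r-1-k}q\|_{L^2(\pa\Omega)}$; \eqref{elllotbdy2} then follows from the Sobolev inequality \eqref{bdysob2} on $\pa\Omega$. The paper phrases the iteration as ``repeating this argument'' rather than a formal induction, but the content is identical.

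There is one small gap in your bookkeeping. The ``l.o.t.'' you hide on the right of \eqref{ellbdyfn1} includes the interior terms $\|\sd q\|_{L^2(\Omega)}^2 + \|q\|_{L^2(\Omega)}^2 + \|q\|_{L^2(\pa\Omega)}^2$, and your induction only addresses the lower-order \emph{boundary} norms $\|\sd^{r-1-k}q\|_{L^2(\pa\Omega)}$. The interior zero-order terms are not covered by the inductive hypothesis and are not bounded by $\|\sDelta q\|_{H^{r-3}(\Omega)}$ without an additional ingredient. The paper disposes of them using the Poincar\'e inequalities \eqref{poin}--\eqref{poin2} (which require $q$ to vanish on $\pa\Omega$, or to be constant there so that one may subtract it off); this is precisely the origin of the $Vol(\Omega)$ dependence in the constants of \eqref{elllotbdy1}--\eqref{elllotbdy2}, which would otherwise be unexplained in your argument. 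Once you insert that step at the bottom of the induction, your proof matches the paper's.
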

    \begin{proof}
      By \eqref{ellbdyfn1} with $m = 1$
      and the estimate \eqref{projest},
      \begin{multline}
       ||\sd^{r-1} q||_{L^2(\pa \Omega)}
       \leq C(K)\bigg(||\sd^{r-3} \theta)(\sd_\N q)||_{L^2(\pa \Omega)}
       + ||\sDelta q||_{H^{r-2}(\Omega)}\\
       + \big(||\theta||_{L^\infty(\pa \Omega)} +
       ||\theta||_{H^{r-4}(\pa \Omega)} \big)\sum_{k = 0}^{r-2}
       ||\sd^k q||_{L^2(\pa \Omega)}^2
       + ||\sd q||_{L^2(\Omega)}^2 + ||q||_{L^2(\Omega)}^2 +
       ||q||_{L^2(\pa \Omega)}^2\bigg).
       \label{}
      \end{multline}
      Because $q = 0$ on $\pa \Omega$, the last term vanishes and
      using \eqref{poin},\eqref{poin2} the third and second last terms are bounded by
      $C(Vol \Omega) ||\sDelta q||_{L^2(\Omega)}$. Repeating this
      argument and using \eqref{bdysob2}
      gives \eqref{elllotbdy1} and \eqref{elllotbdy2}.
    \end{proof}

\section{Estimates for the acoustic wave equation}
\label{sec:wave}

Taking the divergence of \eqref{int:dtv} gives:
\begin{equation}
  \nabla_\mu \nabla_V V^\mu  + \frac{1}{2} g^{\mu\nu}
  \nabla_\mu \nabla_\nu \sigma
= \nabla_V \nabla_\mu V^\mu + (\nabla_\mu V^\nu) (\nabla_\nu V^\mu)
+ \frac{1}{2}g^{\mu\nu} \nabla_\mu \nabla_\nu \sigma,
  \label{}
\end{equation}
while applying $\nabla_V$ to \eqref{int:dtesig} gives:
\begin{equation}
  \nabla_V^2 e(\sigma) + \nabla_V \nabla_\mu V^\mu = 0.
  \label{}
\end{equation}
Therefore, $\sigma$ satisfies:
\begin{equation}
  \nabla_V^2 e(\sigma) - \frac{1}{2}g^{\mu\nu}\nabla_\mu \nabla_\nu \sigma =
  (\nabla_\mu V^\nu)(\nabla_\nu V^\mu).
  \label{wave}
\end{equation}
In this section, it is more convenient to work with $\nabla_u$ than
of $\nabla_V$. Recalling that $V^\mu = \sqrt{\sigma} u^\mu$, the above is:
\begin{equation}
  \sigma \nabla_u^2 e(\sigma) - \frac{1}{2}g^{\mu\nu}\nabla_\mu \nabla_\nu
  \sigma
  = (\nabla_\mu V^\nu)(\nabla_\nu V^\mu) - (\nabla_u e(\sigma))
  (\nabla_u \sigma)
  \label{}
\end{equation}

By \eqref{int:therm} and the definitions $e(\sigma) =
\log(\rho(\sigma)/\sqrt{\sigma})$,
$\eta = \sqrt{p'(\epsilon)}$,
we have:
\begin{equation}
  e'(\sigma) = \frac{1}{2\sigma} \big(\eta^{-2} - 1\big),
  \label{}
\end{equation}
so the first term above is:
\begin{align}
  \sigma \nabla_u^2 e(\sigma) &=
  \sigma e'(\sigma) \nabla_u^2 \sigma + \sigma (\nabla_u e'(\sigma))
  \nabla_u \sigma\\
  &= \frac{1}{2} \big(\eta^{-2} - 1\big)\nabla_u^2 \sigma
  + (\nabla_u e'(\sigma))(\nabla_u \sigma).
  \label{}
\end{align}
The equation \eqref{wave} can be then written as:
\begin{equation}
  (\eta^{-2} - 1) \nabla_u^2 \sigma -
  \nabla_\mu\big(g^{\mu\nu} \nabla_\nu \sigma\big) = 2(\nabla_\mu V^\nu)(\nabla_\nu V^\mu)
  -(\nabla_u e(\sigma) + \nabla_u e'(\sigma)) \nabla_u \sigma
  \label{wave2}
\end{equation}
Writing $g^{\mu\nu} = -u^\mu u^\nu + \aPi^{\mu\nu}$, we see that
$\sigma$ satisfies the following Dirichlet problem:
\begin{align}
 & \eta^{-2} \nabla_u^2 \sigma - \nabla_\mu\big(\Pi^{\mu\nu} \nabla_\nu \sigma\big)
  = \F + \G(\sigma)  &&\textrm{ in }  \D_t, \label{wave3}\\
 & \sigma = \sigma_0 &&\textrm{ on } \pa\D_t,\label{wave3bc}
\end{align}
where:
\begin{align}
 \F &= 2 (\nabla_\mu V^\nu)(\nabla_\nu V^\mu)\label{wave:fdef}\\
 \G(\sigma) &= -(\nabla_u e(\sigma) + \nabla_u  e'(\sigma)) \nabla_u \sigma.
 \label{wave:gdef}
\end{align}
The symmetric (2,0)-tensor $\eta^{-2} u^{\mu}u^\nu + \Pi^{\mu\nu}$ is known
as the acoustical metric
and \eqref{wave3} is known as the acoustic wave equation
(see \cite{Christodoulou2007}).
In this section we will derive estimates for the Dirichlet problem:
\begin{align}
  \eta^{-2}\nabla_u^2\psi - \nabla_\mu\big(\Pi^{\mu\nu}\nabla_\nu \psi\big)
   &= f, &&\textrm{ in } \D_t,\label{wave:eq}\\
  \psi &= C_0, &&\textrm{ on } \pa \D_t,\label{wave:dirich}\\
  \psi = \psi_0, \,\nabla_u \psi &= \psi_1 &&\textrm{ on } \D_0.
  \label{wave:ic}
\end{align}
where $C_0$ is a constant.

Let:
\begin{equation}
\Ew(t) = \frac{1}{2}\int_{\D_t}
\bigg((\nabla_u \psi)^2 + \Pi^{\mu\nu} \nabla_\mu\psi  \nabla_\nu\psi \bigg)
\frac{1}{|\ubar| - \utau}\dvols
+ \frac{1}{2}\int_{\D_t}  (\nabla_u \psi)^2 \big(\eta^{-2} -1\big)(-\utau)\dvols.
\end{equation}
Recall that we are writing:
\begin{align}
&\utau = u^\mu \tau_\mu, &&|\ubar|^2 = \sg(u, u).
\label{}
\end{align}
Beacuse $u$ is timelike and future-directed, $\utau < 0$ so
the first term is non-negative. In addition, $\eta \leq 1$ so the second term is
non-negative, though it degenerates if the sound speed is the same as the speed
of light.

We also note that if:
\begin{equation}
\widetilde{\E}(t) = \int_{\D_t} (\nabla_\tau \psi)^2 + \sg^{ij}\sd_i\psi
\sd_j \psi \dvols
\label{}
\end{equation}
is the energy associated to the wave equation in the metric $g$,
then:
\begin{equation}
 \widetilde{\E}(t) \leq C(\lambda,M)\, \Ew(t),
 \label{wave:coer}
\end{equation}
which follows by writing $\nabla_\tau =
\tau^\mu\nabla_\mu = -\utau\nabla_u +\aPi^\mu_\nu \tau^\nu\nabla_\mu$
and $\sd_\mu = -u_\mu \sd_u+ \aPi_\mu^\nu\sd_\nu $ and the
fact that $-(\utau + |\ubar|)$ is bounded above and below.

The basic energy estimate is:
\begin{lemma}
 Suppose that the assumptions \eqref{assump:baro}-\eqref{assump:fluid1}
 hold. If $\psi$ satisfies
 \eqref{wave:eq}-\eqref{wave:dirich}, then:
 \begin{equation}
  \Ew(t) \leq C(M,\lambda, L, t) \bigg(\Ew(0) + \int_\D | f \, \nabla_u\psi| \dvol\bigg).
  \label{wave:basic}
 \end{equation}
\end{lemma}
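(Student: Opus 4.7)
The plan is the standard multiplier argument for the wave equation \eqref{wave:eq}, viewed as a wave equation in the acoustical Lorentzian metric whose inverse is $G^{\mu\nu} = -\eta^{-2}u^\mu u^\nu + \Pi^{\mu\nu}$. Crucially, the fluid velocity $u^\mu$ is $G$-timelike (with $G_{\mu\nu}u^\mu u^\nu = -\eta^2$) and, by \eqref{int:free}, tangent to the lateral boundary $\Lambda$. I will multiply \eqref{wave:eq} by $\nabla_u\psi$, possibly with a $\tau$-directed correction, and process both terms into divergence form plus lower-order remainders.

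The first term satisfies $\eta^{-2}(\nabla_u^2\psi)(\nabla_u\psi) = \tfrac{1}{2}\nabla_\mu\bigl(u^\mu \eta^{-2}(\nabla_u\psi)^2\bigr) + R_1$. For the second, integrating by parts to move $\nabla_\mu$ off $\Pi^{\mu\nu}\nabla_\nu\psi$, then using $\nabla_\mu\nabla_\alpha\psi = \nabla_\alpha\nabla_\mu\psi$ and the symmetry of $\Pi^{\mu\nu}$ in $\mu,\nu$, gives $-(\nabla_u\psi)\nabla_\mu(\Pi^{\mu\nu}\nabla_\nu\psi) = -\nabla_\mu\bigl(\Pi^{\mu\nu}\nabla_\nu\psi \cdot \nabla_u\psi\bigr) + \tfrac{1}{2}\nabla_\mu\bigl(u^\mu \Pi^{\alpha\beta}\nabla_\alpha\psi\nabla_\beta\psi\bigr) + R_2$. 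The remainders come from derivatives of $u^\mu$, $\eta^{-2}$, and $\Pi^{\mu\nu}$; by \eqref{assump:fluid1}, \eqref{assump:baro}, \eqref{assump:sound} (using that $\nabla\eta^{-2}$ is controlled by $\nabla\sigma$ through the equation of state), $R = R_1+R_2$ is bounded pointwise by $C(M, L)\bigl[(\nabla_u\psi)^2 + \Pi^{\alpha\beta}\nabla_\alpha\psi\nabla_\beta\psi\bigr]$.

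Integrating $f\nabla_u\psi = \nabla_\mu J^\mu + R$ over $\D$ and applying Stokes' theorem, the boundary contribution on $\Lambda$ vanishes: since $\psi \equiv C_0$ on $\Lambda$ and $u$ is tangent to $\Lambda$, we have $\nabla_u\psi \equiv 0$ on $\Lambda$, and combined with $u^\mu N_\mu = 0$ and $N_\mu \Pi^{\mu\nu} = N^\nu$ there, every term of $N_\mu J^\mu|_\Lambda$ vanishes. The contribution on $\D_t$ is $\int_{\D_t}\tau_\mu J^\mu\,\dvols$, which, after using $\tau_\mu u^\mu = \utau$ and $\tau_\mu\Pi^{\mu\nu}\nabla_\nu\psi = \nabla_\tau\psi + \utau\nabla_u\psi$, collapses to a quadratic form in $\nabla\psi$ and $\nabla_u\psi$.

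The main obstacle is matching this $\D_t$ boundary density with a constant multiple of the integrand of $\Ew(t)$: the weights $1/(|\ubar|-\utau) = |\utau|-|\ubar|$ and $(\eta^{-2}-1)(-\utau)$ are natural consequences of the identity $(\utau)^2 - |\ubar|^2 = 1$, but the bare multiplier $\nabla_u\psi$ produces a cross term $(\nabla_\tau\psi)(\nabla_u\psi)$ and an incorrect coefficient of $(\nabla_u\psi)^2$. Eliminating these requires modifying the multiplier by a $\tau$-directed piece (equivalently, contracting the natural $G$-stress-energy current with a carefully chosen $G$-timelike vector field such as $\tau + c\,u$). Once this identification is complete, the remainder estimate gives $\Ew(t) \leq \Ew(0) + C(M,L,\lambda)\int_0^t \Ew(s)\,ds + C\int_\D |f\,\nabla_u\psi|\,\dvol$, and Gronwall's inequality yields the claimed bound.
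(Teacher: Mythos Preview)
Your overall strategy---multiply by $\nabla_u\psi$, integrate by parts, use that $\nabla_u\psi=0$ on $\Lambda$ to kill the lateral boundary term, bound the commutator remainders, and apply Gr\"onwall---matches the paper's proof. The gap is in what you call the ``main obstacle'': you propose modifying the multiplier by a $\tau$-directed piece but do not carry this out, and in fact no such modification is needed.

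The paper keeps the bare multiplier $\nabla_u\psi$. The $\D_t$-boundary density is then $(-\utau)\cdot\tfrac12\bigl(\eta^{-2}(\nabla_u\psi)^2+\Pi^{\mu\nu}\nabla_\mu\psi\nabla_\nu\psi\bigr)$ plus the cross term $(\Pi^{\mu\nu}\tau_\mu\nabla_\nu\psi)\nabla_u\psi$. The key identity is $\Pi^{\mu\nu}\tau_\mu\tau_\nu = (\utau)^2-1 = |\ubar|^2$, so Cauchy--Schwarz gives
\[
\bigl|(\Pi^{\mu\nu}\tau_\mu\nabla_\nu\psi)\nabla_u\psi\bigr| \leq \tfrac{1}{2}|\ubar|\bigl((\nabla_u\psi)^2 + \Pi^{\mu\nu}\nabla_\mu\psi\nabla_\nu\psi\bigr).
\]
Absorbing this, the coefficient $(-\utau)$ on the non-degenerate part becomes $(-\utau-|\ubar|)$; since $(-\utau-|\ubar|)(-\utau+|\ubar|)=1$, this is exactly the weight $1/(|\ubar|-\utau)$ appearing in $\Ew$, and the leftover $\tfrac12(\eta^{-2}-1)(-\utau)(\nabla_u\psi)^2$ is the second piece of $\Ew$. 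No multiplier correction is required.

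One minor point: your pointwise remainder bound $|R|\leq C(M,L)\bigl[(\nabla_u\psi)^2+\Pi^{\alpha\beta}\nabla_\alpha\psi\nabla_\beta\psi\bigr]$ is not quite right as stated, since $R_2$ contains $\Pi^{\mu\nu}(\nabla_\mu u^\alpha)(\nabla_\nu\psi)\nabla_\alpha\psi$ with an unprojected $\nabla_\alpha\psi$. The paper controls such terms via the coercivity $\widetilde{\E}(t)\leq C(\lambda,M)\Ew(t)$ of \eqref{wave:coer}, which is where the $\lambda$-dependence in the final constant enters.
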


\begin{proof}
 We multiply \eqref{wave:eq} by
$\nabla_u \psi$ and integrate over $\D$ with respect to $\dvol$:
\begin{multline}
 \int_\D \bigg(\eta^{-2} \nabla_u^2 \psi \nabla_u \psi
 -  \nabla_\mu\big(\aPi^{\mu\nu} \nabla_\nu \psi\big) \nabla_u \psi
\bigg) \dvol\\
 = \frac{1}{2}\int_\D \eta^{-2} \nabla_u \big(\nabla_u \psi\big)^2\dvol
 + \int_{\D} \aPi^{\mu\nu} (\nabla_\nu \psi) (\nabla_\mu \nabla_u \psi)\dvol
 \\- \int_\Lambda  \aPi^{\mu\nu} \big(\nabla_\nu \psi\big)
 \big(\nabla_u \psi\big) N_\mu
 \dvol
 + \int_{\D_t}\Rr(t) \dvols - \int_{\D_0} \Rr(0)\dvols
 \label{wave:ibp}
\end{multline}
where:
\begin{equation}
 \Rr(t) = (\aPi^{\mu\nu}\tau_\mu \nabla_\nu \psi)\nabla_u \psi
 \label{wave:rdef}
\end{equation}
Beause $\psi = 0$  on $\Lambda$ and $\nabla_u$ is tangential at the boundary,
the integral over $\Lambda$ vanishes.
The first term on the right-hand side is:
\begin{multline}
 \frac{1}{2}\int_{\D}\eta^{-2} \nabla_u(\nabla_u\psi)^2 \dvol\\
 = \frac{1}{2}\int_{\D_t} \eta^{-2} (-u^\mu\tau_\mu) (\nabla_u \psi)^2
 \dvol
 - \frac{1}{2}\int_{\D_0} \eta^{-2} (-u^\mu\tau_\mu) (\nabla_u \psi)^2 \label{}
 -\frac{1}{2}\int_{\D} \nabla_\mu\big( u^\mu \eta^{-2}\big) (\nabla_u \psi)^2
 \dvol.
\end{multline}

Writing $\nabla_\mu \nabla_u \psi = \nabla_u \nabla_\mu\psi
- (\nabla_\mu u^\nu) \nabla_\nu\psi$,
the second term on the right-hand side of \eqref{wave:ibp} is:
\begin{multline}
 \int_\D \aPi^{\mu\nu} (\nabla_\nu \psi) (\nabla_u \nabla_\mu \psi)
 \dvol
 -
 \int_{\D} \aPi^{\mu\nu} (\nabla_\mu u^\alpha)(\nabla_\nu\psi)(\nabla_\alpha \psi)
  \dvol\\
 = \frac{1}{2}\int_\D \nabla_u\bigg(\aPi^{\mu\nu} (\nabla_\nu \psi)
 (\nabla_\mu \psi)\bigg)\dvol
 -\int_\D (\nabla_u \aPi^{\mu\nu})(\nabla_\mu \psi) (\nabla_\nu \psi)\dvol
 -
 \int_{\D} \aPi^{\mu\nu} (\nabla_\mu u^\alpha)(\nabla_\nu\psi)
 \nabla_\alpha \psi \dvol,
\end{multline}
and the first term here is:
\begin{multline}
 \frac{1}{2}\int_{\D}\nabla_u\bigg( \aPi^{\mu\nu} (\nabla_\mu \psi)
 (\nabla_\nu \psi)\bigg)\dvol\\
 = \frac{1}{2} \int_{\D_t} (-u^\mu \tau_\mu) \aPi^{\mu\nu} (\nabla_\mu \psi)
 (\nabla_\nu \psi) \dvols
 - \frac{1}{2} \int_{\D_t} (-u^\mu \tau_\mu) \aPi^{\mu\nu} (\nabla_\mu \psi)
 (\nabla_\nu \psi) \dvols - \int_{\D} (\nabla_{\mu'} u^{\mu'})
 \aPi^{\mu\nu} (\nabla_\mu \psi)(\nabla_\nu \psi) \dvol.
 \label{}
\end{multline}
Writing
\begin{equation}
 \e(t) = \frac{1}{2}\bigg(\eta^{-2} (\nabla_u\psi)^2+ \aPi^{\mu\nu}(\nabla_\mu
  \psi) (\nabla_\nu\psi)\bigg),
 \label{}
\end{equation}
so far we have the identity:
\begin{multline}
 \int_{\D} f \nabla_u \psi \dvol
 = \int_{\D_t}(-\utau) M(t)  + \Rr(t)
\dvols - \int_{\D_0} (-\utau) M(0)  - \Rr(0)\dvols\\
 - \frac{1}{2}\int_\D \nabla_\mu(u^\mu \eta^{-2}) (\nabla_u \psi)^2
 +  (\nabla_{\mu'}u^{\mu'} )\aPi^{\mu\nu}
 (\nabla_\mu \psi)(\nabla_\nu \psi)\dvol\\
 - \int_\D \big(\nabla_u \aPi^{\mu\nu}\big) (\nabla_\mu\psi)(\nabla_\nu\psi)
 \dvol
 +\aPi^{\mu\nu} (\nabla_\mu u^\alpha)(\nabla_\nu \psi)
 (\nabla_\alpha\psi)\dvol.
 \label{}
\end{multline}

By the assumptions \eqref{assump:baro}-\eqref{assump:ecoer}
on the equation of state and the bound \eqref{assump:fluid1},
 the terms on the
second line are bounded by $C(M, L)\Ew(t)$, and by \eqref{wave:coer},
the terms on the third line are bounded by $C(\lambda, M) \Ew(t)$.

To deal with the terms on $\D_t$, we first note that:
\begin{equation}
 \Pi^{\mu\nu} \tau_\mu\tau_\nu = g^{\mu\nu}\tau_\mu\tau_\nu
 + (\utau)^2
 = -1 + (\utau)^2 = \sg(\ubar, \ubar) = |\ubar|^2
 \label{}
\end{equation}
Therefore:
\begin{align}
 \big|\Pi^{\mu\nu}\nabla_\nu \psi \tau_\nu| |\nabla_u\psi|
 &\leq   \big(\Pi^{\mu\nu}\tau_\mu\tau_\nu \big)^{1/2}\big(\Pi^{\mu\nu}
 \nabla_\mu \psi\nabla_\nu\psi\big)^{1/2} |\nabla_u\psi|\\
 &\leq \frac{1}{2}|\bar{u}|\bigg( (\nabla_u\psi)^2 + \Pi^{\mu\nu}
 (\nabla_\mu \psi)
 (\nabla_\nu \psi)\bigg)
\end{align}
so, recalling that $\utau < 0$:
\begin{align}
 |(-u^\tau)M(t) + \Rr(t)| &\geq \frac{1}{2}\bigg|
 \big((-u^\tau) \eta^{-2} - |\bar{u}| \big)(\nabla_u \psi)^2
 + \big((-u^\tau) - |\bar{u}|\big)\aPi^{\mu\nu} \nabla_\mu\psi
 \nabla_\nu\psi\bigg)\\
 &=\frac{1}{2} \bigg|(-u^\tau - |\bar{u}|)\bigg( (\nabla_u \psi)^2
 + \aPi^{\mu\nu}\nabla_\mu\psi\nabla_\nu\psi\bigg)
 + \frac{1}{2}\big(\eta^{-2}-1\big)(-\utau) (\nabla_u \psi)^2\bigg|.
 \label{}
\end{align}
Because $\eta \leq 1$, the last term is positive. Also note that
we have:
\begin{equation}
 1 = -u^\mu u_\mu = (\utau)^2 - |\bar{u}|^2 = (-u^\tau - |\bar{u}|)
 (-u^\tau + |\bar{u}|),
 \label{}
\end{equation}
so we have shown that:
\begin{equation}
 \Ew(t) \leq \Ew(0) + C(M,\lambda, L)\int_0^t \Ew(s) \, ds,
 \label{}
\end{equation}

The estimate \eqref{wave:basic} then follows from Gr\"{o}nwall's integral inequality.
\end{proof}

To control higher derivatives, we will use the fact that $\nabla_u$ is
tangential at the boundary to control $\nabla_u^\ell \psi$, and
then the elliptic estimates from section \ref{sec:ell}
to control space derivatives.

We define:
\begin{multline}
 \Ew^r(t) = \frac{1}{2}\int_{\D_t}
 \bigg( \big(\nabla_u^{r+1} \psi\big)^2 + \Pi^{\mu\nu}
 (\nabla_\mu \nabla_u^r \psi) (\nabla_\nu \nabla_u^r \psi)
 \bigg) \frac{1}{|\ubar| - \utau} \dvols\\
 +\frac{1}{2} \int_{\D_t} \big(\nabla_u^{r+1}\psi\big)^2
(\eta^{-2} -1 ) (-\utau) \dvols.
 \label{}
\end{multline}
We will also need some
mixed norms in the interior:
\begin{align}
 ||\psi(t)||_{k,\ell} = \sum_{s \leq k, m \leq \ell}
 ||\sd^s \nabla_u^m \psi(t)||_{L^2(\D_t)}, && ||\psi(t)||_{r}
 = \sum_{k + \ell \leq r} ||\psi(t)||_{k,\ell}.
 \label{}
\end{align}
Note that $||\psi(t)||_{r, 0} + ||\psi(t)||_{r-1,1} \leq C(M,\lambda, L)
\Ew^{r-1}(t)$.
We define:
\begin{align}
 f_{k,\ell} &= \sd^k \nabla_u^\ell f,\\
 g_{k,\ell}^1(\psi)  &= \sd^{k}(\sDelta \nabla_u^\ell \psi
 - \sd_u^\ell \sDelta \psi),\\
 g_{k,\ell}^2(\psi)  &= \sd^{k}\nabla_u^\ell \nabla_\tau^2 \psi
 - \nabla_\tau^2 \sd^k\sd_u^\ell \psi,\\
  e_{k,\ell}(\psi) &= \eta^{-2}\sd^{k} \nabla_u^{\ell+2} \psi
  - \sd^{k}\nabla_u^\ell (\eta^{-2} \nabla_u^2 \psi)
 \label{}
\end{align}
as well as:
\begin{align}
 \Rr_r = \sum_{k + \ell \leq r} ||f_{k,\ell}||_{L^2(\Omega)}
 + ||g^1_{k,\ell}||_{L^2(\Omega)}
 + ||g^2_{k,\ell}||_{L^2(\Omega)} + ||e_{k,\ell}||_{L^2(\Omega)}.
 \label{}
\end{align}
We will also write:
\begin{align}
  \aDelta &= \nabla_\mu(\aPi^{\mu\nu}\nabla_\nu)\\
 \widetilde{g}_{r}(\psi) &= \aDelta \nabla_u^r \psi -
\nabla_u^\ell \aDelta \psi
 \label{gtilde}
\end{align}

Because $\nabla_u$ is tangential at the boundary,
$\psi_{r} = \nabla_u^r \psi$ satisfies the wave equation:
\begin{align}
 \eta^{-2}\nabla_u^2 \psi_r - \nabla_{\mu}\big(\aPi^{\mu\nu} \nabla_\nu
 \psi_r\big)
 &= f_{0,r} + \widetilde{g}_{0,r}(\psi) + e_{0,r}(\psi) &&\textrm{ in }  \Omega\\
 \psi_r &= 0 &&\textrm{ on } \pa\Omega,
\end{align}
and so by \eqref{wave:basic}, we have:
\begin{multline}
 \Ew^{r}(t) \leq C\bigg( \Ew^{r}(0) + \int_{0}^t
 \big( ||f_{0,r}(s)||_{L^2(\Omega)} +
 ||\widetilde{g}_{0,r}(s)||_{L^2(\Omega)} +
  ||e_{0,r}(s)||_{L^2(\Omega)} \big)
 ||\nabla_u \psi_\ell(s)||_{L^2(\Omega)}\,ds\bigg),
 \label{wave:du}
\end{multline}
with $C = C(M, \lambda,L, t)$.

While it was convenient to use the decomposition $g^{\mu\nu} = -u^\mu u^\nu
+ \aPi^{\mu\nu}$ to control the material derivatives $\nabla_u$,
to control the spatial derivatives it is simpler
to
use the decomposition $g^{\mu\nu} = -\tau^\mu \tau^\nu + \sg^{\mu\nu}$.
With $\sDelta$ the Laplace-Beltrami operator on $\D_t$ (defined
in \eqref{assump:lb})
we re-write \eqref{wave2} as an elliptic equation:
\begin{equation}
  \sDelta \sigma = (\eta^{-2}-1)\nabla_u^2\sigma + \nabla_\mu(\tau^{\mu}
  \tau^\nu\nabla_\nu \sigma) + \F + \G(\sigma).
 \label{}
\end{equation}

We will now prove estimates for the solution to the Dirichlet
problem:
\begin{align}
 &\sDelta \psi = (\eta^{-2} - 1)\nabla_u^2\psi + \nabla_\tau^2 \psi
 + f &&\textrm{ in } \Omega \label{ellform1}\\
 &\psi = C_0 &&\textrm{ on } \pa\Omega \label{ellform2},
\end{align}
where $C_0$ is a constant.
The term $\nabla_u^2 \psi$ is lower order because it vanishes on the
boundary.
The term $\nabla_\tau^2 \psi$ does not vanish on the boundary and is above
top order, but we can use \eqref{assump:stat} to control it in terms
of $\nabla_u^2\psi$ and a small parameter times $\sd^2\psi$.

Then the elliptic estimates from section \ref{sec:ell} imply:
\begin{lemma}
 Under the above hypotheses, if $k + \ell = r$
 with $r \geq 5$:
 \begin{multline}
  ||\psi(t)||_{k,\ell} \leq C \bigg( \Ew^r(t)
  + \Rr_{r-1}(t)\\
  + \big(||\theta(t)||_{H^{r-2}(\pa\D_t)} + ||u(t)||_{r-1}+
  P(M, ||u(t)||_{r-2})\big)\bigg) ||\psi(t)||_{r-1}
  \label{wave:mixbd}
 \end{multline}
 where $C = C(M,\lambda, L, K, R, Vol(\D_t), ||\theta||_{H^2(\pa \D_t)})$.
\end{lemma}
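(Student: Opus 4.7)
The plan is to induct on the number of spatial derivatives $k$, applying the elliptic formulation \eqref{ellform1}-\eqref{ellform2} to $q := \nabla_u^\ell \psi$, and to absorb the top-order ``time-time'' piece $\nabla_\tau^2 \psi$ on the right of \eqref{ellform1} using the smallness assumption \eqref{assump:stat}. For the base cases $k = 0, 1$, the energy $\Ew^r(t)$ directly bounds $||\nabla_u^{r+1}\psi||_{L^2(\D_t)}$ and $||\aPi^{\mu\nu}\nabla_\mu \nabla_u^r \psi||_{L^2(\D_t)}$; combining this with the decomposition $\nabla_\mu = -\tau_\mu \nabla_\tau + \sd_\mu$ and the matter-derivative identity \eqref{matderivident} yields $||\sd \nabla_u^r \psi||_{L^2(\D_t)}$ modulo terms absorbed either into the remaining energy or into $||u||_{r-1}\, ||\psi||_{r-1}$. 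This handles $||\psi||_{0,r} + ||\psi||_{1,r-1}$.

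For $k \geq 2$, assume the bound holds for all $(k', \ell')$ with $k' + \ell' = r$ and $k' < k$. Because $\psi = C_0$ on $\pa\D_t$ and $\nabla_u$ is tangent to $\Lambda$, $q = \nabla_u^\ell \psi$ is constant on $\pa\D_t$, so $\nas q = 0$ there. Applying \eqref{ellbdyfn2} to $q$ with spatial order $k$, the boundary contribution $||\Pis \sd^k q||_{L^2(\pa \D_t)}$ is estimated via Proposition \ref{projprop} by the top-order product $||\nas^{k-2}\theta||_{L^2(\pa\D_t)}\, ||\sd_\N q||_{L^\infty(\pa\D_t)}$ plus strictly lower-order boundary data; controlling $||\sd_\N q||_{L^\infty(\pa\D_t)}$ by $||\psi||_{r-1}$ via Sobolev embedding on $\pa\D_t$ (for which $r \geq 5$ suffices) and disposing of the lower-order data using Proposition \ref{elllotbdy} and a trace inequality yields the required factor $||\theta||_{H^{r-2}(\pa\D_t)}\, ||\psi||_{r-1}$.

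For the interior term in \eqref{ellbdyfn2} I would use \eqref{ellform1} to write
\begin{equation*}
\sDelta \nabla_u^\ell \psi = (\eta^{-2} - 1)\nabla_u^{\ell+2}\psi + \nabla_\tau^2 \nabla_u^\ell \psi + \nabla_u^\ell f + [\sDelta, \nabla_u^\ell]\psi - [\nabla_\tau^2, \nabla_u^\ell]\psi,
\end{equation*}
and then apply $\sd^{k-2}$. The first piece contributes a norm of type $||\psi||_{k-2,\ell+2}$ of total order $r$ but with only $k - 2 < k$ spatial derivatives, handled by the inductive hypothesis. The commutators and the $f$-term fit into $\Rr_{r-1}$. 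After expanding $\nabla_\tau = (\utau)^{-1}(\nabla_u - \bar u^\mu \sd_\mu)$ via \eqref{matderivident}, the $\nabla_\tau^2 \nabla_u^\ell \psi$ piece produces a top-order term of size $\lambda^2 ||\sd^k \nabla_u^\ell \psi||_{L^2(\D_t)}$, together with subleading contributions involving at most $r-1$ derivatives of $u$ that are absorbed by the factor $||u||_{r-1} + P(M, ||u||_{r-2})$. Choosing $\lambda^*$ and the Young parameter $\delta$ in \eqref{ellbdyfn2} small relative to the universal constants allows this top-order $\lambda^2$ term, as well as the boundary contribution $\delta ||\Pis \sd^k q||_{L^2(\pa\D_t)}^2$, to be absorbed into the left-hand side.

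The principal obstacle is the careful bookkeeping of the expansion of $\nabla_\tau^2 \nabla_u^\ell \psi$ in mixed derivatives: one must verify that every term produced by distributing $\sd^{k-2}$ across $(\utau)^{-2}(\nabla_u - \bar u^\mu \sd_\mu)^2 \nabla_u^\ell \psi$ either carries a coefficient at least $\lambda^2$ (and is absorbed) or loses a derivative on $\psi$ (and is controlled by the inductive hypothesis or $\Rr_{r-1}$), while derivatives falling on $\utau$, $\bar u^\mu$, or $\eta$ do not exceed the regularity encoded in $||u||_{r-1} + P(M, ||u||_{r-2})$.
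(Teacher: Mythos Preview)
Your approach is essentially the paper's: both reduce $\|\psi\|_{k,\ell}$ to $\|\psi\|_{k-2,\ell+2}$ via the elliptic estimate \eqref{ellbdyfn2} applied to $q=\nabla_u^\ell\psi$, feed \eqref{ellform1} into the $\sDelta q$ term, and absorb the resulting $\lambda^2\|\sd^k\nabla_u^\ell\psi\|_{L^2}$ on the left. The base cases $k=0,1$ and the role of $\Rr_{r-1}$ are handled identically.

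The one point that needs adjusting is your treatment of the boundary product $(\nas^{k-2}\theta)(\sd_{\N} q)$. You always place $\theta$ in $L^2(\pa\D_t)$ and $\sd_{\N} q$ in $L^\infty(\pa\D_t)$, claiming the latter is controlled by $\|\psi\|_{r-1}$ via Sobolev. But $\sd_{\N}\nabla_u^\ell\psi$ already carries $\ell+1=r-k+1$ derivatives, and Sobolev on the $2$-dimensional boundary costs two more, plus a trace; so this bound only closes when $\ell$ is small, i.e.\ $k$ close to $r$. For small $k$ (large $\ell$) it fails. The paper fixes this by a case split: for $k\leq r-2$ it swaps the roles, putting $\nas^{k-2}\theta$ in $L^\infty$ (legitimate since $k-2\leq r-4$ and Sobolev then gives $\|\nas^{k-2}\theta\|_{L^\infty}\lesssim\|\theta\|_{H^{r-2}(\pa\D_t)}$) and $\sd_{\N} q$ in $L^2$ via the trace estimate \eqref{ellbdy1}; only for $k=r-1,r$ does it use your distribution, and that is exactly where the hypothesis $r\geq5$ is consumed. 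With this correction your argument matches the paper's.
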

\begin{proof}
The cases $k = 0,1$ follow from \eqref{wave:du}.
For $k \geq 2$ we use the elliptic estimate \eqref{ellbdyfn2}
and then\eqref{elllotbdy} to control the term involving
projected derivatives:
\begin{multline}
 ||\sd^k \nabla_u^\ell \psi||_{L^2(\D_t)}
 \leq C\bigg( ||(\nas^{k-2} \theta) (\sd_\N \nabla_u^\ell \psi)||_{L^2(\pa \D_t)}\\
 + (||\theta||_{L^\infty(\pa \D_t)} + R) ||\sd^{k-2} \sDelta \nabla_u^\ell
 \psi||_{L^2(\D_t)}
 + \widetilde{C}
  ||\sDelta \nabla_u^\ell \psi||_{H^{k-3}(\D_t)}\bigg),
 \label{wave:mixstart}
\end{multline}
where $\widetilde{C} = \widetilde{C}(K, R, Vol(\D_t), ||\theta||_{L^\infty(\pa\D_t)},
||\theta||_{H^{k-3}(\pa \D_t)})$.

 When $k \leq r-2$, we bound the first term
$||\nas^{k-2} \theta||_{L^\infty(\pa \D_t)}
||\sd_\N \nabla_u^\ell \psi||_{L^2(\pa \D_t)}$, and by Sobolev embedding
\eqref{bdysob2} and the estimate \eqref{ellbdy1} the result is bounded
by the right-hand side of \eqref{wave:mixbd}.
For
 $k = r-1, r-2$, we bound this by $||\theta||_{H^{r-2}(\pa \D_t)}
 ||\nabla \nabla_u^\ell \psi||_{L^\infty(\pa \D_t)}
 \leq C||\theta||_{H^{r-2}(\pa \D_t)}
 ||\nabla \nabla_u^\ell\psi||_{H^2(\pa \D_t)}$. Again by
 \eqref{ellbdy1}, this second factor is bounded by the right-hand
 side of \eqref{wave:mixbd} because $r \geq 5$.

To control $||\sd^{k-2}\sDelta \nabla_u^\ell \psi||_{L^2(\D_t)}$,
we use \eqref{ellform1} and \eqref{dtfn}:
\begin{multline}
 ||\sd^{k-2} \sDelta \nabla_u^\ell \psi||_{L^2(\D_t)}
 \leq C(L)||\sd^{k-2} \nabla_u^{\ell+2}\psi||_{L^2(\D_t)}
 + \lambda^2 ||\sd^k \nabla_u^\ell \psi||_{L^2(\D_t)}
 + \Rr_{r-1}(t)\\
 + ||u||_{r-1}||\psi||_1 + P(M, ||u||_{r-2})||\psi||_{r-1}.
 \label{}
\end{multline}
For sufficiently small $\lambda$, returning to \eqref{wave:mixstart}
we have:

 \begin{multline}
   ||\sd^{k}\nabla_u^\ell \psi||_{L^2(\D_t)}
 \leq C \bigg( ||\sd^{k-2}\nabla_u^{\ell+2} \psi||_{L^2(\D_t)}
  + \Rr_{r-1}(t)\\
  + \big(||\theta(t)||_{H^{r-2}(\pa\D_t)} + ||u(t)||_{r-1}+
  P(M, ||u(t)||_{r-2})\big)\bigg) ||\psi(t)||_{r-1},
  \label{wave:mixbd2}
\end{multline}
with $C$ as in \eqref{wave:mixbd}.
Replacing $k$ with $k-2$, $\ell$ with $\ell+2$ and repeating
this argument gives \eqref{wave:mixbd}.
\end{proof}

We finally return to the equation \eqref{wave3} satisfied by the enthalpy
$\sigma$. We define:
\begin{multline}
 \EW^r(t) = \sum_{\ell \leq r} \int_{\D_t}
 \bigg( (\nabla_u^{r+1} \sigma(t)\big)^2 + \aPi^{\mu\nu} (\nabla_\mu
 \nabla_u^r\sigma(t))
 (\nabla_\nu \nabla_u^r\sigma(t))\bigg) \frac{1}{|\ubar| - \utau} \dvols
 \\
 + \frac{1}{2} \int_{\D_t} (\nabla_u^{r+1} \sigma)^2
(\eta^{-2} - 1) (-\utau) \dvols.
 \label{}
\end{multline}

\begin{cor}
 If $\sigma$ satifies \eqref{wave} and the assumptions
 \eqref{assump:rm}, \eqref{assump:stat}, \eqref{assump:baro},
 \eqref{assump:sound},\eqref{assump:fluid1} and \eqref{assump:fluid2}
 hold, then there is a polynomial $P$ so that:
 \begin{multline}
  \EW^r(t) \leq C\bigg( \EW^r(0) + \int_0^t \bigg(
  ||\nabla^{r+1}\sigma(s)||_{L^2(\D_t)} \\+ P(||\sigma(s)||_{r}, ||V(s)||_{r},
  M, L, R) \bigg)
   ||\nabla_u^{r+1}\sigma(s)||_{L^2(\D_t)} \, ds\bigg),
  \label{sigmawavebd}
 \end{multline}
 and, for $k + \ell = r+1$:
 \begin{equation}
  ||\sigma||_{k,\ell} \leq C\bigg( \EW^r(t) + P(||\sigma||_r,
  ||V||_r,R)\bigg),
  \label{sigmaellbd}
 \end{equation}
 where $C = C(t, M,\lambda, K, ||\theta||_{H^{r-2}(\pa \D_t)}, Vol(\D_t),
 L)$.
\end{cor}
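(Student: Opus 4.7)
The plan is to view $\sigma$ (or rather $\sigma-\sigma_0$) as a solution of the Dirichlet problem \eqref{wave:eq}--\eqref{wave:dirich} for the acoustic wave equation with forcing $f = \F + \G(\sigma)$, where $\F$ and $\G$ are given by \eqref{wave:fdef}--\eqref{wave:gdef}, and then apply the higher-order energy estimate \eqref{wave:du} together with the mixed-norm bound \eqref{wave:mixbd} already established in this section. Since $\sigma_0$ is a constant on $\pa\D_t$, the function $\sigma - \sigma_0$ satisfies the homogeneous Dirichlet condition required by those lemmas, and $\EW^r(t)$ agrees with $\Ew^r(t)$ applied to $\sigma$ since only $\nabla_u^{r+1}\sigma$ and $\nabla_\mu\nabla_u^r\sigma$ appear.

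For the energy bound \eqref{sigmawavebd}, the main task is to control the $L^2$ norms of $f_{0,r} = \nabla_u^r(\F + \G)$, $\widetilde{g}_{0,r}(\sigma)$, and $e_{0,r}(\sigma)$ by the right-hand side of \eqref{sigmawavebd}. First, the material-derivative commutators $\widetilde{g}_{0,r}$ and $e_{0,r}$ expand via repeated application of the identity $[\nabla_u,\nabla_\mu]\beta = (\nabla_u\Gamma - \nabla\Gamma\,u)\cdot\beta + Rm\cdot u\cdot\beta$ and the Leibniz rule into sums of products of $\nabla^{j_1}u$ with $\nabla^{j_2}\sigma$ (plus curvature factors bounded by \eqref{assump:rm}), with $j_1 + j_2 \leq r+1$ and no factor carrying more than $r+1$ derivatives of $\sigma$. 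A standard Moser-type product estimate (putting the lowest-order factor in $L^\infty$ via Sobolev embedding and the assumptions \eqref{assump:fluid1}) bounds these by the polynomial $P(||\sigma||_r,||V||_r,M,L,R)$, except for the single top-order term $\nabla^{r+1}\sigma$ which appears linearly and is left as the explicit $||\nabla^{r+1}\sigma(s)||_{L^2(\D_t)}$ factor in \eqref{sigmawavebd}. The forcing $\nabla_u^r\F$ is quadratic in $\nabla V$ and by the Leibniz rule contributes only lower-order pieces controlled by $P(||V||_r,M)$, and $\nabla_u^r\G$ is handled similarly using \eqref{assump:baro}, \eqref{assump:sound} and \eqref{assump:ecoer}. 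Plugging these bounds into \eqref{wave:du} yields \eqref{sigmawavebd}.

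For \eqref{sigmaellbd} the plan is to apply \eqref{wave:mixbd} to $\sigma$, which already packages exactly the mixed-norm control needed. The remainder term $\Rr_{r-1}$ appearing there is estimated exactly as in the previous paragraph but now one derivative lower, so it is absorbed into $P(||\sigma||_r,||V||_r,R)$. The factors $||\theta||_{H^{r-2}(\pa\D_t)}$ and the volume appearing in the constant are allowed in the constant $C$ by hypothesis. This delivers \eqref{sigmaellbd}.

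The main obstacle is the bookkeeping in estimating $\widetilde{g}_{0,r}$ and $e_{0,r}$: one must verify that after distributing $\nabla_u^r$ past the elliptic operator $\nabla_\mu(\aPi^{\mu\nu}\nabla_\nu\cdot)$ and the factor $\eta^{-2}$, every commutator either produces at most $r+1$ derivatives on $\sigma$ (contributing the linear $||\nabla^{r+1}\sigma||_{L^2}$ term) or can be controlled by the polynomial in $||\sigma||_r$ and $||V||_r$. Once this is carried out with the standard Moser-type product inequality, both conclusions follow immediately from the earlier lemmas of this section.
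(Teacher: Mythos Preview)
Your approach matches the paper's: apply \eqref{wave:du} and \eqref{wave:mixbd} to $\sigma$ with $f=\F+\G(\sigma)$, and bound the commutator and forcing terms. The paper simply packages those remainder estimates into Lemma~\ref{remainderlem} in the appendix, which is exactly the Moser-type bookkeeping you sketch.

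There is one imprecision worth flagging. You write that $\nabla_u^r\F$ ``contributes only lower-order pieces controlled by $P(\|V\|_r,M)$,'' but this is not quite right: the Leibniz term $2(\nabla_u^r\nabla_\mu V^\nu)(\nabla_\nu V^\mu)$ carries $r+1$ derivatives on $V$ and is not controlled by $\|V\|_r$. The paper's Lemma~\ref{remainderlem} records this top-order piece explicitly as $\|\nabla\sd^k\nabla_u^\ell V\|_{L^2}$ in the bound for $\F_{k,\ell}$, and likewise $\|\nabla^{r+1}u\|_{L^2}$ appears in the bound for $\widetilde{g}_{0,r}$. To land on the right-hand side of \eqref{sigmawavebd}, one must then use the Euler equation $\nabla_u V=-\tfrac{1}{2\sqrt{\sigma}}\nabla\sigma$ (from \eqref{int:dtv}) together with $u=V/\sqrt{\sigma}$ to convert these into terms carrying $r+1$ derivatives on $\sigma$, which are absorbed by the explicit $\|\nabla^{r+1}\sigma\|_{L^2}$ term. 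Once this conversion is made, your argument goes through.
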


\begin{proof}
We write:
\begin{align}
 \F_{k,\ell} = \sd^k \nabla_u^\ell \F,\\
 \G_{k,\ell}(\sigma) = \sd^k \nabla_u^\ell \G(\sigma),
 \label{}
\end{align}
where $\F, \G$ are defined in \eqref{wave:fdef}-\eqref{wave:gdef}.

By Lemma \ref{remainderlem} in the appendix we have
the following estimates when $k + \ell = r$:
\begin{align*}
 ||\F_{k,\ell}||_{L^2(\D_t)}
 &\leq C(M, \lambda) \bigg( ||\nabla \sd^k \nabla_u^\ell V||_{L^2(\D_t)} + P(||V||_r,
 ||\sigma||_r, R)\bigg),\\
 ||g_{k,\ell}^1(\sigma)||_{L^2(\D_t)} + ||g_{k,\ell}^2(\sigma)||_{L^2(\D_t)}
 + ||\widetilde{g}_r(\sigma)||_{L^2(\D_t)}
 &\leq C(M,\lambda) \bigg(
 ||\nabla^{k+ \ell+1} u||_{L^2(\D_t)} + P(||u||_r)
 ||\sigma||_r\bigg),\\
 ||e_{k,\ell}(\sigma)||_{L^2(\D_t)}
 + ||\G_{k,\ell}(\sigma)||_{L^2(\D_t)} &\leq C(M,\lambda,L) \bigg(
 ||\sd^k \nabla_u^\ell \sigma||_{L^2(\D_t)} + P(||u||_{r-1}, ||\sigma||_{r-1})
 \bigg),
 \label{}
\end{align*}
for some polynomials $P$.

Then \eqref{sigmawavebd} and \eqref{sigmaellbd} follow from
\eqref{wave:du} and \eqref{wave:mixbd}.

\end{proof}

\section{The energy identity}

We let $\beta_{\mu} = \beta_{I\mu}$ and $\alpha_\mu = \alpha_{I \mu}$.
As in previous sections, we will write $\sbe_\mu = \sPi^\nu_\mu \beta^\nu$
as well as $\div \beta = g^{\mu\nu}\nabla_\mu \beta_{I\nu}$. We will also
write:
\begin{align}
 &\sdiv \alpha = \sg^{\mu\nu} \sd_{\mu} \alpha_{I \nu},
 &&\scurl \alpha_{\mu\nu} =  \sd_{\mu} \alpha_{I \nu} - \sd_\nu \alpha_{I\mu},
 \label{}
\end{align}
We let $\Pis$ be the projection to $\pa \D_t$ extended to the interior,
defined in \eqref{extPidef}.
If $\omega$ is a $(0, r)$ tensor we set:
\begin{equation}
 Q(\omega, \omega) = \sg^{IJ} \big(\Pis_I^K \omega_K\big)
 \big(\Pis_J^L \omega_L\big)
 = \sg^{i_1 j_1}\cdots \sg^{i_r j_r}
 \big(\Pis^{k_1}_{i_1}\cdots \Pis^{k_r}_{i_r}\omega_{k_1\cdots k_r}\big)
 \big(\Pis^{\ell}_{j_1}\cdots \Pis^{\ell_r}_{j_r}
 \omega_{\ell_1\cdots \ell_r}\big)
 \label{}
\end{equation}
and then we define:
\begin{multline}
 E(t)
 = \frac{1}{2}\int_{\D_t}\bigg(
 \big(\tau^\mu\tau^\nu + \sg^{\mu\nu}\big)
 \gamma^{ij}Q(\sd_i \beta_\mu, \sd_j\beta_\nu)
 \bigg)\frac{\sqrt{\sigma}}{|\ubar| - \utau}\dvols\\
 + \frac{1}{2}\int_{\D_t}\gamma^{ij} Q(\sd_i\alpha, \sd_j \alpha)(-\Vtau) \kappa \dvols
 \\+ \frac{1}{2}
 \int_{\pa \D_t} \gamma^{ij} Q(\sd_i \alpha, \sd_j \alpha)  (-\Vtau) \nu\dvolbdy,
 \label{}
\end{multline}
where $0 < \nu,\kappa < \infty$. Suppose that the following bounds for
$\nu,\kappa$ and $\pa \D_t$ hold:
\begin{align}
 &\left|\frac{\nabla_V \kappa}{\kappa}\right| \leq C_1,&&\textrm{ in }
\D_t,\label{kappa}\\
 &\left|\frac{\nabla_V \nu}{\nu}\right| \leq C_2,
 &&\textrm{ on } \pa \D_t\label{nu}\\
 &|\theta| + \frac{1}{\iota_0} \leq K,
 &&\textrm{ on } \pa \D_t.
 \label{K}
\end{align}
Then we have the following energy estimate:
\begin{prop}
  If the assumptions \eqref{kappa}-\eqref{K} and \eqref{assump:rm}
  hold, then:
 \begin{align}
  E(t) -E(0) &\leq
    \int_0^t \sqrt{E(s)}\bigg( ||\nabla_V \sd \beta + \nabla
  \sd\alpha||_{L^2(\D_s)} + ||\div \sd \beta + \kappa \nabla_V
   \sd\alpha||_{L^2(\D_s)}\\
  &+ ||\Pis\big(N^\mu\sd \beta_\mu - \nu \nabla_V \sd \alpha\big)||_{L^2(\pa \D_s)}
  + || \sd \alpha + V^\mu \sd \beta_\mu||_{L^2(\D_s)}\bigg)\\
  &+ C(R)\bigg( ( C_1 + C_2 + K)E(s) + (||\sdiv \alpha||_{L^2(\D_s)}^2
  + ||\scurl \alpha||_{L^2(\D_s)}^2\\
  & + ||\sdiv \beta||_{L^2(\D_s)}^2
  + ||\scurl \beta||_{L^2(\D_s)}^2\bigg) \,ds.
  \label{en:ident}
 \end{align}
\end{prop}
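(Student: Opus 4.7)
The plan is to differentiate $E(t)$ in $t$, use integration by parts to uncover the coupling between $\alpha$ and $\beta$, and then apply Gr\"onwall's inequality. The four $L^2$ quantities on the right-hand side of \eqref{en:ident} are precisely the failures of $(\alpha,\beta)$ to satisfy the symmetric hyperbolic first-order system underlying the differentiated Euler equations, so the proposition measures how far $(\alpha,\beta)$ deviates from a true solution. Working in the Lagrangian frame \eqref{lagdef} and using the volume identity \eqref{dtvol}, I would first derive transport formulas of the form
\begin{equation*}
 \frac{d}{dt}\int_{\D_t} F\,w\,\dvols = \int_{\D_t}(\nabla_V F)\,w\,\dvols + \int_{\D_t}F\,w\,\mathcal{A}(V,w)\,\dvols,
\end{equation*}
with an analogous identity on $\pa\D_t$, where $\mathcal{A}$ depends on $\div V$ and $\nabla_V w/w$ and is controlled by $C_1$ (for interior weights), $C_2$ (for boundary weights), and the a priori bounds on $V$. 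Because $V$ is tangent to $\Lambda$ by \eqref{int:free2}, no extra flux through $\pa\D_t$ appears in the interior identity.

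Applied to each term of $E(t)$ and using the symmetry of $Q$, this yields
\begin{equation*}
 \tfrac{d}{dt}E(t) = I_\beta + I_\alpha + B_\alpha + (C_1+C_2+K)\,\mathcal{O}(E(t)),
\end{equation*}
where $I_\beta,I_\alpha$ are the interior pairings of $\sd\beta$ with $\nabla_V\sd\beta$ and of $\sd\alpha$ with $\nabla_V\sd\alpha$ against the weights of $E$, and $B_\alpha$ is the analogous boundary term. To reveal the coupling I would commute $\nabla_V$ past $\sd$ (producing curvature and $\nabla V$ remainders controlled via \eqref{assump:rm}), then integrate $\sd$ by parts in $I_\beta$. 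The pointwise decomposition \eqref{ellpw} organizes the resulting second derivatives into $\sdiv\sd\beta$, $\scurl\sd\beta$, and a tangential boundary flux. Pairing with $I_\alpha$ and applying Cauchy--Schwarz produces terms bounded by $\sqrt{E(s)}$ times the $L^2$ norms of $\nabla_V\sd\beta+\nabla\sd\alpha$ and of $\div\sd\beta+\kappa\nabla_V\sd\alpha$. The remaining contribution $\sd\alpha+V^\mu\sd\beta_\mu$ arises from the $\tau^\mu\tau^\nu$ part of the first line of $E$, which is not controlled by the $\gamma$-contracted form $Q$ and must be absorbed via contraction with $V^\mu$.

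On $\pa\D_t$, the boundary flux produced by integrating $\sd$ by parts in $I_\beta$ has the schematic form $\int_{\pa\D_t}\gamma^{ij}Q(\sd_i\alpha,N^\mu\sd_j\beta_\mu)(-\Vtau)\dvolbdy$. Combined with $B_\alpha$, and using that $V$ is tangent to $\Lambda$ in order to integrate $\nabla_V$ by parts tangentially along $\pa\D_t$, the total boundary contribution is bounded by $\sqrt{E(s)}\,\|\Pis(N^\mu\sd\beta_\mu-\nu\nabla_V\sd\alpha)\|_{L^2(\pa\D_s)}$ plus $K$-remainders coming from $\sd\gamma$ via \eqref{dgam}. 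The prefactor $\nu$ emerges exactly from the boundary weight in $E$, and the projection $\Pis$ appears because $\gamma^{ij}$ annihilates the normal direction.

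The commutator remainders $[\nabla_V,\sd]$ and $[\sd_i,\sd_j]$ produce $\nabla V$ and $Rm$ terms absorbed by $C(R)E(s)$ via \eqref{ellpw} and \eqref{assump:rm}, with the lowest-order spatial contributions accounting for the $\|\sdiv\alpha\|^2+\|\scurl\alpha\|^2+\|\sdiv\beta\|^2+\|\scurl\beta\|^2$ summands in \eqref{en:ident}. Integrating in time yields the stated estimate. The main technical obstacle is the boundary analysis: the form $Q$ tangentializes only the spatial index via $\Pis$, so after integration by parts at $\pa\D_t$ one is left with mixed contributions involving both $N^\mu\sd\beta_\mu$ and $\nabla_V\sd\alpha$, and it must be verified that the surviving combination is exactly $\Pis(N^\mu\sd\beta_\mu-\nu\nabla_V\sd\alpha)$ with the precise prefactor $\nu$. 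This forces the specific relationship between the interior weight $\kappa$ and the boundary weight $\nu$ that is built into the definition of $E$.
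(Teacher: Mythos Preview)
Your overall strategy---pair the equations against $\sd\beta$, integrate by parts, recognize the energy, and collect residuals---is the same as the paper's. However, two of your intermediate steps are misconceived, and the paper's execution differs in ways worth noting.

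First, the paper does not differentiate $E(t)$ and then try to uncover the coupling. It starts instead from the multiplier identity: pair $\nabla_V\sd_i\beta_{I\mu}+\nabla_\mu\sd_i\alpha_I$ against $\gamma^{ij}\gamma^{IJ}g^{\mu\nu}\sd_j\beta_{J\nu}$ and integrate over the spacetime slab $\D$. The first term becomes $\tfrac12\nabla_V(Q(\sd\beta,\sd\beta))$, and in the second term one integrates the $\nabla_\mu$ (not $\sd$) by parts. This is what simultaneously produces $-\int_\D\sd\alpha\cdot\div\sd\beta$ in the interior and the flux $\int_\Lambda N^\mu\sd\beta_\mu\cdot\sd\alpha$ on the timelike boundary. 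Your description ``integrate $\sd$ by parts in $I_\beta$'' would instead yield $\nabla_V\beta$ paired against $\sdiv\sd\beta$, which gives no useful coupling to $\alpha$. The interior and boundary $\alpha$-terms in $E$ then appear by adding and subtracting $\kappa\nabla_V\sd\alpha$ and $\nu\nabla_V\sd\alpha$ respectively.

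Second, your use of \eqref{ellpw} is off. That estimate is pointwise algebra, not an integration-by-parts identity, and it does not produce the boundary flux or the coupling. Its actual role is later: when derivatives fall on the projections $\gamma^{ij}\gamma^{IJ}$ (bounded by $K$ via \eqref{dgam}), the resulting error involves $\sd\alpha$, $\sd\beta$ contracted with the full $\sg$, whereas $E$ only controls the $\gamma$-contracted $Q$-form. The pointwise estimate \eqref{ellpw} upgrades this control at the cost of the $\|\sdiv\alpha\|^2+\|\scurl\alpha\|^2+\|\sdiv\beta\|^2+\|\scurl\beta\|^2$ terms appearing on the right of \eqref{en:ident}. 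These terms do not come from commutators.

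Finally, the residual $\sd\alpha+V^\mu\sd\beta_\mu$ does not arise because $Q$ fails to see the $\tau$-component---$Q$ projects only the multi-index $I$, and the $\mu$-index is contracted with $\tau_\mu\tau_\nu+\sg_{\mu\nu}$. Rather, the spacetime divergence theorem leaves a cross term $\int_{\D_t}Q(\sd\alpha,\tau^\mu\sd\beta_\mu)$ on the time slice. The paper writes $\sd\alpha=-V^\nu\sd\beta_\nu+(\sd\alpha+V^\nu\sd\beta_\nu)$; the first piece combines with the sign-indefinite $g^{\mu\nu}Q(\sd\beta_\mu,\sd\beta_\nu)(-V^\tau)$ term to produce the positive form $(\tau^\mu\tau^\nu+\sg^{\mu\nu})Q(\sd\beta_\mu,\sd\beta_\nu)$ and the weight $\sqrt\sigma/(|\ubar|-u^\tau)$, while the second piece is the residual.
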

\begin{remark}
  We will apply the above lemma with $\beta = \sd^{r-1} V$,
  $\alpha = \frac{1}{2}\sd^{r-1} \sigma$,
  $\kappa = 2e'(\sigma)$ and $\nu = \frac{2}{|\nabla_N \sigma|}$.
  The first and second
  terms on the right-hand side then vanish to highest order by \eqref{int:dtv}
  and \eqref{int:dtesig}.
  The third term is a tangential projection
  $\Pis$ of:
  \begin{equation}
   N^\mu \sd^r V_\mu - \frac{1}{|\nabla_N \sigma|} \nabla_V \nabla^r \sigma
   = -\frac{1}{|\nabla_N \sigma|}
   \bigg( \nabla^\mu \sigma  \nabla^r V_\mu + \nabla_V \nabla^r \sigma\bigg).
   \label{}
  \end{equation}
  To highest order, $\nabla^\mu \sigma \nabla^r V_\mu + \nabla_V \nabla^r
  \sigma = \nabla^r \nabla_V \sigma$, and because $\nabla_V \sigma = 0$,
  $\Pis \nabla^r \nabla_V \sigma$ is lower order.
  The fourth term will be lower order because $\sigma = -V^\mu V_\mu$.
\end{remark}
\begin{proof}

  We multiply $\nabla_V \sd_i \beta_{I\mu} + \nabla_\mu \sd_i \alpha_I$
  by $\gamma^{ij} \gamma^{IJ} g^{\mu\nu} \sd_j \beta_{J\nu}$ and integrate
  over $\D$ to get:
\begin{align}
 \int_{\D} \gamma^{ij} &\gamma^{IJ} g^{\mu\nu}\bigg((\nabla_V \sd_i \beta_{I\mu})
 (\sd_j\beta_{J\nu}) + (\nabla_\mu\sd_i \alpha_{I} )(\sd_j \beta_{J\nu})\bigg)
 \dvol\\
 &=  \int_{\D} \gamma^{ij} \gamma^{IJ}g^{\mu\nu}\bigg(\frac{1}{2}
 \nabla_V\big( (\sd_i \beta_{I\mu}) (\sd_j \beta_{J\nu})\big)
 -  (\sd_i \alpha_I) (\nabla_\mu \sd_j \beta_{J\nu})\bigg)\dvol
 \\
 &- \int_{\D_t}\gamma^{ij} \gamma^{IJ} (\sd_i \alpha_I)( \sd_j \beta_{I\mu}
  \tau^\mu) \dvols
 + \int_{\D_0}\gamma^{ij} \gamma^{IJ} (\sd_i \alpha_J)( \sd_j \beta_{I\mu}
 \tau^\mu) \dvols\\
 &+ \int_{\Lambda}\gamma^{ij} \gamma^{IJ} (\sd_i \alpha_I )
 (\sd_j \beta_{J\mu})N_\mu \dvolbdy
 - \int_{\D} \nabla_\mu(\gamma^{ij}\gamma^{IJ})g^{\mu\nu} (\sd_i\alpha_I)(\sd_j
 \beta_{J\nu})\dvol
\label{en:int}\end{align}

The first term on the second line is:
\begin{multline}
 \frac{1}{2}\int_{\D_t} \gamma^{ij}g^{\mu\nu}Q(\sd_i \beta_\mu, \sd_j \beta_\nu)
 (-V^\tau)\dvols
 -\frac{1}{2}\int_{\D_0} \gamma^{ij}g^{\mu\nu}Q(\sd_i \beta_\mu, \sd_j \beta_\nu)
  (-V^\tau)\dvols
 \\ \frac{1}{2} \int_{\Lambda}\gamma^{ij}g^{\mu\nu}Q(\sd_i \beta_\mu, \sd_j \beta_\nu)
 V^{\mu'} N_{\mu'}
 -
  \frac{1}{2}\int_{\D} \nabla_{\mu'}\big(\gamma^{ij}\gamma^{IJ} V^{\mu'}\big)
  g^{\mu\nu}(\sd_i \beta_{I\mu})( \sd_j \beta_{J\nu}),
\end{multline}
and the term on $\Lambda$ vanishes by the boundary condition \eqref{int:free}.

The second term on the second line of \eqref{en:int} is:
\begin{multline}
 \int_{\D} \gamma^{ij}\gamma^{IJ}(\sd_i\alpha_I)
 (\nabla_V \sd_j\alpha_J)\kappa \dvol
 - \int_{\D}\gamma^{ij}\gamma^{IJ}(\sd_i\alpha_I)(\div \sd_j\beta_J +
  \kappa \nabla_V\sd_j \alpha_J)\dvol\\
 = \frac{1}{2}\int_{\D_t} \gamma^{ij}\gamma^{IJ}
 (\sd_i\alpha_I)(\sd_j\alpha_J)\kappa (-V^\tau) \dvols
 - \frac{1}{2}\int_{\D_0} \gamma^{ij}\gamma^{IJ}
 (\sd_i\alpha_I)(\sd_j\alpha_J)\kappa (-V^\tau)\dvols\\
 - \int_{\D} \nabla_{\mu}\big(\gamma^{ij}\gamma^{IJ}\kappa V^\mu\big)
 (\sd_i\alpha_I)(\sd_j\alpha_J)
 - \gamma^{ij}\gamma^{IJ}(\sd_i\alpha_I)(\div \sd_j\beta_J + \kappa \nabla_V
 \sd_j \alpha_J)\dvol.
 \label{}
\end{multline}

The integral over $\Lambda$ from \eqref{en:int} is:
\begin{multline}
  \int_{\Lambda}\gamma^{ij}Q(\sd_i\alpha, \sd_j\beta_\mu)N^\mu \dvolbdy
  \\
 = \int_{\Lambda} \gamma^{ij}Q (\sd_i \alpha,
   \nabla_V \sd_j \alpha)\nu\dvolbdy
   + \int_{\Lambda}\gamma^{ij}Q(\sd_i \alpha,
   (\sd_j\beta_\mu) N^\mu - \nu \nabla_V \sd \alpha)\dvolbdy.
 \label{}
\end{multline}
By the boundary condition \eqref{int:free},
$\nabla_V = V^\mu \nabla_\mu = V^\mu \nas_\mu$, where $\nas$ is the intrinsic
covariant derivative on $\Lambda$. Since the boundary of $\Lambda$ is
$\pa \D_t \cup \pa \D_0$, applying the divergence theorem on $\Lambda$
gives:
\begin{multline}
 \int_{\Lambda} \gamma^{ij}\gamma^{IJ} (\sd_i \alpha_I)
   (\nabla_V \sd_j \alpha_J)\nu\dvolbdy\\
   =\frac{1}{2} \int_{\pa\D_t} \gamma^{ij} \gamma^{IJ} (\sd_i\alpha_I)(\sd_j \alpha_J)
  (-V^\mu\tau_\mu) \nu\dvolbdy
   - \frac{1}{2}\int_{\pa\D_0} \gamma^{ij} \gamma^{IJ} (\sd_i\alpha_I)(\sd_j \alpha_J)
   (-V^\mu\tau_\mu) \nu\dvolbdy\\
   -\int_{\Lambda} \nabla_{\mu}\big(\gamma^{ij}\gamma^{IJ} V^\mu \nu\big)
  (\sd_i\alpha_I)(\sd_j \alpha_J) \dvolbdy.
 \label{en:bdyterm}
\end{multline}

If we write
$\sd_i \alpha = -V^\nu \sd_i\beta_\nu + (\sd_i\alpha +
V^\nu\sd_i \beta_\nu)$ and $g^{\mu\nu} = -\tau^\mu \tau^\nu + \sg^{\mu\nu}$,
then the integrals over $\D_t$ from the above calculation are:
\begin{multline*}
 \int_{\D_t} \frac{1}{2}\gamma^{ij}\bigg(g^{\mu\nu}
 Q(\sd_i \beta_\mu, \sd_j \beta_\nu)
  + Q(\sd_i \alpha, \sd_j \alpha)\kappa
\bigg) (- V^\tau) -  Q(\sd_i \alpha, \sd_j \beta_{\mu})\tau^\mu
  \dvols\\
  =
  \int_{\D_t} \frac{1}{2}\gamma^{ij}\bigg( \big(-\tau^\mu\tau^\nu
  Q(\sd_i \beta_\mu,
   \sd_j \beta_\nu) + \sg^{\mu\nu}
   Q(\sd_i \beta_\mu, \sd_j \beta_\nu)
   + Q(\sd_i\alpha, \sd_j \alpha)\kappa \big)(-V^\tau)\\
   + Q(\sd_i\beta_\nu,\sd_j \beta_\mu) V^\nu \tau^\mu
   - Q(\sd_i\alpha + V^\nu \sd_i\beta_\nu, \sd_j \beta_\mu)\tau^\mu
   \bigg)\dvols
 \label{}
\end{multline*}
Next we write:
\begin{equation}
 -Q(\sd_i \beta_\nu, \sd_j \beta_\mu) V^\nu \tau^\mu
 =  Q(\sd_i\beta_\nu, \sd_j \beta_\mu) \tau^\nu \tau^\mu V^\tau
 - Q(\sd_i\beta_\nu, \sd_j \beta_\mu) \sV^\nu \tau^\mu,
 \label{}
\end{equation}
and so the above is:
\begin{multline}
 \frac{1}{2}\int_{\D_t} \gamma^{ij} \bigg( \tau^\mu\tau^\nu Q(\sd_i \beta_\mu,
 \sd_j \beta_\nu) + \sg^{\mu\nu}
 Q(\sd_i \beta_\mu, \sd_j \beta_\nu)
 + Q(\sd_i\alpha, \sd_j \alpha)\kappa\bigg)(-V^\tau)\dvols\\
 + \int_{\D_t} \gamma^{ij}Q(\sd_i \beta_\nu, \sd_j \beta_\mu) \sV^\nu \tau^\mu
 \dvols
 +  \int_{\D_t}\gamma^{ij}Q(\sd_i\alpha + V^\nu \sd_i\beta_\nu, \sd_j \beta_\mu)
 \tau^\mu
 \dvols.
 \label{en:simp}
\end{multline}

Now we note that:
\begin{equation}
 |Q(\sd_i \beta_\nu, \sd_j \beta_\mu) \sV^\nu \tau^\mu|
 \leq \frac{1}{2} |\sV| \bigg( \tau^\mu\tau^\nu Q(\sd_i \beta_\mu,
 \sd_j \beta_\nu) + \sg^{\mu\nu} Q(\sd_i \beta_\mu, \sd_j \beta_\nu)\bigg),
 \label{}
\end{equation}
so the sum of the terms in the first line of \eqref{en:simp} with the first term
on the second line is bounded below by:
\begin{multline}
  \frac{1}{2}\int_{\D_t} \gamma^{ij}\bigg(\tau^\mu\tau^\nu Q(\sd_i\beta_\mu, \sd_j
  \beta_\nu) (-V^\tau - |\sV|) + \sg^{\mu\nu}
  Q(\sd_i \beta_\mu, \sd_j \beta_\nu) (-V^\tau - |\sV|)
  + Q(\sd_i \alpha, \sd_j \alpha) \kappa (-V^\tau) \dvols\\
  = \frac{1}{2} \int_{\D_t} \gamma^{ij}\bigg(
  \tau^\mu \tau^\nu Q(\sd_i\beta_\mu, \sd_j \beta_\nu) + \sg^{\mu\nu}Q(\sd_i
  \beta_\mu,\sd_j \beta_\nu) \bigg) \frac{\sigma}{|\sV| - V^\tau} \dvols
  + \int_{\D_t} \gamma^{ij} Q(\sd_i \alpha, \sd_j \alpha)\kappa(-V^\tau) \dvols,
 \label{}
\end{multline}
where we have used that $(-V^\tau - |\sV|)(-V^\tau + |\sV|) = \sigma$.
This plus the first term from \eqref{en:bdyterm} is $\E(t)$.

The remaining terms involving integrals over $\D$ and $\Lambda$ are
the error terms:
\begin{multline}
\int_\D \gamma^{ij} \bigg(g^{\mu\nu}Q(\nabla_V \sd_i \beta_\mu
+ \nabla_\mu \sd_i \alpha, \sd_j \beta_\nu)
- Q( \sd_i \alpha, \div \sd_j \beta + \kappa \nabla_V \sd_j \alpha)
- Q(\sd_i\alpha + V^\nu\sd_i\beta_\nu, \sd_j \beta_\mu)\tau^\mu\bigg)
\dvol\\
+ \int_{\Lambda} Q(\sd_i\alpha, N^\mu\sd_j\beta_\mu - \nu \nabla_V \sd_j
\alpha) \dvolbdy,
\label{en:lot}
\end{multline}
and the terms where derivatives fall on the coefficients:
\begin{multline}
 \int_{\D} \nabla_{\mu}( \gamma^{ij} \gamma^{IJ}) g^{\mu\nu} (\sd_i\alpha_I)
 (\sd_j\beta_{J\nu})
 + \nabla_{\mu'}(\gamma^{ij}\gamma^{IJ}V^{\mu'}) g^{\mu\nu}
 \sd_i \beta_{I\mu}\sd_j\beta_{J\nu}
  + \nabla_\mu (\gamma^{ij}\gamma^{IJ} \kappa V^\mu)
 (\sd_i\alpha_I)(\sd_j\alpha_J)\dvolbdy\\
 + \int_{\Lambda} \nabla_\mu(\gamma^{ij}\gamma^{IJ} V^\mu (-\nu))
 (\sd_i \alpha_I)(\sd_j\alpha_J) \dvol
 \label{en:dproj}
\end{multline}

The terms in \eqref{en:lot} are bounded by the right-hand side of
\eqref{en:ident}.
To deal with the first term in \eqref{en:dproj}, we first write:
\begin{equation}
 \sd_j\beta_{J\nu} = -(\sd_j\beta_{J\tau})\tau_\nu +
 \sd_j\sbe_{J\nu} - \beta_{J\nu'} \sd_j\tau^{\nu'}\tau_\nu,
 \label{}
\end{equation}
where $\beta_{J\tau} = \beta_{J\mu}\tau^\mu$.
To control the terms resulting from the components $\sbe$, we use
the elliptic estimate \eqref{ellpw}:
\begin{multline}
 \bigg|\int_{\D_t} \nabla_\mu(\gamma^{ij} \gamma^{IJ})
 g^{\mu} (\sd_i \alpha_I)(\sd_j \sbe_{J\nu'})\dvols\bigg|\\
 \leq C\bigg(KE + \big(||\sdiv \alpha||_{L^2(\D_t)} +
 ||\scurl \alpha||_{L^2(\D_t)}\big)\big( ||\sdiv \sbe||_{L^2(\D_t)}
 + ||\scurl \sbe||_{L^2(\D_t)}\bigg).
 \label{}
\end{multline}
We will deal with the term coming from $\sd_j \beta_{J\tau}$
momentarily.

To deal with the second term in \eqref{en:dproj}, we use the same
argument to bound the spatial components $\sbe$ by:
\begin{multline}
 \bigg|\int_{\D_t} \nabla_{\mu'}(\gamma^{ij} \gamma^{IJ}V^{\mu'})
 g^{\mu} (\sd_i \sbe_{I\mu})(\sd_j \sbe_{J\nu})\dvols\bigg|\\
 \leq C\bigg(KE + \big(||\sdiv \sbe||_{L^2(\D_t)} +
 ||\scurl \sbe||_{L^2(\D_t)}\bigg)^2.
\end{multline}

To control the second term in \eqref{en:dproj}, we
use the same argument as well as the assumption \eqref{kappa} on $\kappa$:
\begin{multline}
 \bigg|\int_{\D_t} \nabla_\mu(\gamma^{ij}\gamma^{IJ} \kappa V^\mu)
 (\sd_i \alpha_I)(\sd_j\alpha_J) \dvols\bigg|\\
 \leq C\bigg( (C_1 + K)E + ||\sdiv \alpha||_{L^2(\D_t)}^2 +
  ||\scurl\, \alpha||_{L^2(\D_t)}^2\bigg).
 \label{}
\end{multline}

For the boundary term in \eqref{en:dproj}, we instead use
the assumption \eqref{nu} on $\nu$:
\begin{equation}
 \bigg|\int_{\D_t} \nabla_\mu(\gamma^{ij}\gamma^{IJ} V^\mu (-\nu))
 (\sd_i \alpha_I)(\sd_j \alpha_J) \dvolbdy\bigg|
 \leq C(C_2 + 1)E.
 \label{}
\end{equation}

Finally, to deal with the terms coming from $\sd_j \beta_{J\tau}$, we write:
\begin{equation}
 \nabla_i\beta^\tau
 = \frac{1}{2V^\tau}\bigg(\sd_i\alpha + 2\sd_i\beta_\mu \sV^\mu\bigg)
 + \bigg(\sd_i\beta^\tau - \frac{1}{2V^\tau}(\sd_i\alpha +
  2\sd_i\beta_\mu \sV^\mu)\bigg).
\end{equation}
We can then use the same arguments as above to bound the terms resulting
from the terms in the first bracket here, and after multiplying
by $V^\tau$, the terms
in the second bracket here are bounded by the fourth term on the right-hand side
of \eqref{en:ident}.
\end{proof}

\section{The higher-order equations}
\begin{lemma}
 Suppose that $u$ satisfies \eqref{assump:stat}
 and that the assumptions \eqref{assump:rm}, \eqref{assump:baro}, \eqref{assump:sound},
 \eqref{assump:lower} and \eqref{assump:ecoer} hold. If $V, \sigma$ satisfy
 \eqref{int:dtv} and \eqref{int:dtesig} then:
 \begin{align}
  &|\nabla_V \sd^r V + \tfrac{1}{2}\nabla \sd^r \sigma|
  + |\nabla_V \curl \sd^{r-1} V|
  \leq C \sum_{s=1}^r |(\sd^{s} V)(\sd^{r-s} \nabla V)|
   + R\sum_{s = 1}^{r-1} |\nabla^s V| , \label{he:mom}\\
  &|\tfrac{1}{2}\sd^r \sigma + V^\mu \sd^r V_\mu|
  \leq C \sum_{s = 1}^r |\sd^{s}V^\mu|
  |\sd^{r-s}V_\mu|,\label{he:norm}\\
  &|\Pis(\nabla_V \sd^{r}\sigma + (\sd^r V)\cdot \nabla \sigma
  - \sd^r \nabla_V \sigma) | \leq  C\sum_{s = 1}^{r-1} \big|
  \Pis \big( (\sd^s V^\mu)(\sd^{r-s}\nabla_\mu\sigma)\big)\big|.
  \label{he:bdy}
\end{align}
and there is a polynomial $P$ so that:
\begin{multline}
  |\div \sd^r V + e'(\sigma) \nabla_V \sd^r \sigma| \\
  \leq  C(M)
    \sum_{s=1}^r |\sd^{s} V| |\sd^{r-s} \nabla V|
    + R\sum_{s = 0}^{r-1} |\nabla^s V|
  +  P(L, |\sd\sigma|, ..., |\sd^{r-1} \sigma|).\label{he:mass}
 \end{multline}

 For $k + \ell = r-1$, $k \geq 2$, there is a polynomial $P$
 so that:
 \begin{multline}
  |\sd^{k-2} \sDelta \nabla_u^\ell \sigma
  - (\sd^{k-2} \nabla_u^{\ell-1} \sDelta u^\mu)\nabla_\mu \sigma|\\
  \leq C\bigg( |\sd^{k-2} \nabla_u^{\ell+2}\sigma|
  + \lambda |\sd^k \nabla_u^\ell \sigma|
  + \sum_{s = 0}^{r-1} |(\nabla^{s+1}V) (\nabla^{r-s+1} V)|\\
  +
  P(R, L,  |\nabla \sigma|,...|\nabla^{r-2} \sigma|, |\nabla u|,...,
  |\nabla^{r-2} u|)\bigg).
  \label{he:lapl}
 \end{multline}
\end{lemma}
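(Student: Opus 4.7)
The strategy is uniform across all five inequalities: apply an appropriate differential operator to one of the fluid equations \eqref{int:dtv}, \eqref{int:dtesig}, the normalization $V^\mu V_\mu = -\sigma$, or the elliptic reformulation \eqref{ellform1}, and then track the commutators. The commutators of interest all fall into two types: products of the form $(\sd^{s}V)(\sd^{r-s}\nabla V)$ or $(\sd^{s}V)(\sd^{r-s}\nabla\sigma)$ coming from the Leibniz rule, and lower-order curvature remainders of the form $R\sum_{j\le s-2}|\nabla^{j}\beta|$ produced when permuting covariant derivatives via \eqref{perm}. These are precisely the terms that appear on the right-hand sides.

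I would dispose of the four algebraic estimates first. For \eqref{he:norm}, apply $\sd^{r}$ to $V^\mu V_\mu = -\sigma$ (a consequence of \eqref{int:norm} and \eqref{int:vdef}) by the Leibniz rule; the $s=0$ and $s=r$ terms combine into $2V^\mu\sd^{r}V_\mu$, and the cross terms have the stated product form. For \eqref{he:mom}, apply $\sd^{r}$ to \eqref{int:dtv} and commute $\sd^{r}$ past $\nabla_V$ and past $g^{\mu\nu}\nabla_\nu$; the curl variant is obtained by applying $\curl\,\sd^{r-1}$ to \eqref{int:dtv} and using $\curl\nabla\sigma = 0$ to eliminate the pressure gradient. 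For \eqref{he:mass}, apply $\sd^{r}$ to \eqref{int:dtesig}, convert $\sd^{r}\div V$ into $\div \sd^{r}V$ via \eqref{ell:symident}, and expand $\sd^{r}(e'(\sigma)\nabla_V\sigma)$; the chain-rule contributions from derivatives of $e'(\sigma)$ produce the polynomial $P$ in $|\sd\sigma|,\dots,|\sd^{r-1}\sigma|$ with coefficients controlled by \eqref{assump:ecoer}. Finally, \eqref{he:bdy} is a direct algebraic commutator identity: expand $\sd^{r}\nabla_V\sigma = \sd^{r}(V^\mu\nabla_\mu\sigma)$ by Leibniz, identify the $s=0$ term with $\nabla_V\sd^{r}\sigma$ (modulo a curvature commutator absorbed via \eqref{perm}) and the $s=r$ term with $(\sd^{r}V)\cdot\nabla\sigma$, and note that the remaining $1\le s\le r-1$ cross terms give the right-hand side after projecting with $\Pis$.

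The main work is in \eqref{he:lapl}. Here I would start from the elliptic reformulation \eqref{ellform1}, which identifies $\sDelta\sigma$ with $(\eta^{-2}-1)\nabla_u^{2}\sigma + \nabla_\tau^{2}\sigma + \F + \G(\sigma)$. The plan is to commute $\sDelta$ through $\nabla_u^{\ell}$ and then apply $\sd^{k-2}$. A single commutator $[\sDelta,\nabla_u]\sigma$ produces $(\sDelta u^\mu)(\nabla_\mu\sigma)$ plus lower-order quadratic pieces; iterating $\ell$ times and then applying $\sd^{k-2}$ yields exactly the term $(\sd^{k-2}\nabla_u^{\ell-1}\sDelta u^\mu)(\nabla_\mu\sigma)$ that is subtracted on the left-hand side of \eqref{he:lapl}. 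The remaining $\sd^{k-2}\nabla_u^{\ell}\sDelta\sigma$ is then expanded via \eqref{ellform1}: the $(\eta^{-2}-1)\nabla_u^{2}\sigma$ contribution gives the $|\sd^{k-2}\nabla_u^{\ell+2}\sigma|$ term; for the $\nabla_\tau^{2}\sigma$ contribution, use \eqref{matderivident} to write $\nabla_\tau = (u^\tau)^{-1}(\nabla_u - \bar u^\mu\sd_\mu)$ so that two applications produce a top-order factor carrying $\bar u/u^\tau$, bounded by $\lambda|\sd^{k}\nabla_u^{\ell}\sigma|$; and the forcing terms $\F,\G(\sigma)$ are expanded by Leibniz to give the quadratic sum $\sum|(\nabla^{s+1}V)(\nabla^{r-s+1}V)|$ together with the polynomial $P$.

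The hard part will be the combinatorial bookkeeping in this last step: one must distribute $\sd^{k-2}\nabla_u^{\ell}$ across every commutator and nonlinear term in \eqref{ellform1} without generating a top-order derivative of $\sigma$ outside the permitted $\sd^{k-2}\nabla_u^{\ell+2}\sigma$ and $\lambda\,\sd^{k}\nabla_u^{\ell}\sigma$ budget, while absorbing every curvature remainder from \eqref{perm} into $P$ via \eqref{assump:rm}. The smallness of $\lambda$ is essential precisely because the $\nabla_\tau^{2}\sigma$ contribution is genuinely of top spatial order, and only the smallness allows this contribution to be treated as a small perturbation rather than a principal term.
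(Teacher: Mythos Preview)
Your proposal is correct and follows essentially the same approach as the paper: Leibniz expansions of \eqref{int:dtv}, \eqref{int:dtesig}, and $\sigma=-V^\mu V_\mu$ together with the curvature commutator bound \eqref{perm} give \eqref{he:mom}--\eqref{he:mass}, while \eqref{he:lapl} is obtained from \eqref{ellform1} exactly as you describe. The paper packages your $\nabla_\tau^2$ argument via \eqref{matderivident} into the ready-made estimate \eqref{dtfn}, but the content is identical.
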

\begin{proof}
  By \eqref{int:dtv}, $(\curl \nabla_V V)_{\mu\nu} = 0$. Therefore,
  writing $\nabla_{\mu} (V^{\nu'}\nabla_{\nu'})
  = (\nabla_\mu V^{\nu'} )\nabla_{\nu'} + V^{\nu'}$, we have:
  \begin{equation}
   (\nabla_V \curl V)_{\mu\nu} = \nabla_\mu V^{\nu'} \nabla_{\nu'} V_\nu
   - \nabla_\nu V^{\nu'}\nabla_{\nu'} V_\mu.
   \label{}
  \end{equation}
  The estimates \eqref{he:mom} and \eqref{he:bdy} then
  follow from the product rule
  and the bound \eqref{assump:rm} for the Riemann tensor. For
  \eqref{he:norm} we additionally use that $\sigma
  = -V^\mu V_\mu$.

  To get \eqref{he:mass}, we can argue as in the proof of
  \eqref{crest} to get:
  \begin{equation}
   |\nabla^r (e'(\sigma) \nabla_V \sigma) - e'(\sigma)
   \nabla^r \nabla_V \sigma|
   \leq P(L, |\sd \sigma|,..., |\sd^{r-1} \sigma|)
   \label{}
  \end{equation}
  The estimate \eqref{he:lapl} is similar and uses the equation
  \eqref{ellform1} as well as \eqref{dtfn}.
\end{proof}

We will also need estimates for $\sdiv V$ and $\scurl V$ in terms
of $\div V$ and $\curl V$:
\begin{lemma}
  Suppose that \eqref{assump:stat} holds and that $V, \sigma$ satisfy
  \eqref{int:dtv} and \eqref{int:dtesig}. Then:

  \begin{align}
   |\sdiv \sd^{r-1} V| \leq |\sd^{r-1} \nabla_V e(\sigma)| +
   C(M,L)\bigg(|\nabla_V \sd^{r-1} V| +
   \lambda|\sd^{r}V|  + P(|\nabla^{r-2} V|, R)\sum_{k = 1}^{r}|\nabla^{k-1}V|
   \bigg), \label{he:sdiv}\\
   |\scurl \sd^{r-1} V| \leq |\sd^{r-1}  \curl V|
   + C(M,L)\bigg(|\nabla_V \sd^{r-1} V| +
   \lambda|\sd^{r}V|  + P(|\nabla^{r-2} V|, R)\sum_{k = 1}^{r}|\nabla^{k-1}V|
   \bigg).
   \label{he:scurl}
  \end{align}
\end{lemma}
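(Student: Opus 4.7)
The plan is to reduce the spatial divergence and curl of $\sd^{r-1}V$ to the full spacetime divergence and curl, which are controlled by the fluid equations, up to correction terms involving $\nabla_\tau$. For \eqref{he:sdiv} I start from the identity $\sdiv V = \div V + \tau_\mu\nabla_\tau V^\mu$ from \eqref{sdivident}, applied componentwise to $\sd^{r-1}V$, which yields
\begin{equation*}
\sdiv \sd^{r-1}V = \sd^{r-1}\div V + \sd^{r-1}\bigl(\tau_\mu \nabla_\tau V^\mu\bigr) + [\sdiv,\sd^{r-1}]V.
\end{equation*}
The commutator is bounded by $R\sum_{k\leq r-2}|\nabla^k V|$ via \eqref{assump:rm} and \eqref{perm}, and $\sd^{r-1}\div V = -\sd^{r-1}\nabla_V e(\sigma)$ by the continuity equation \eqref{int:dtesig}; this produces the leading term $|\sd^{r-1}\nabla_V e(\sigma)|$ on the right-hand side of \eqref{he:sdiv}.

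Next I substitute \eqref{matderivident}, $\nabla_\tau = (\utau)^{-1}(\nabla_u - \ubar^\nu \nabla_\nu)$, into the remaining term and expand via the product rule. The top-order contribution, in which $\sd^{r-1}$ falls entirely on $V$, is $(\utau)^{-1}\tau_\mu\bigl(\nabla_u \sd^{r-1}V^\mu - \ubar^\nu \nabla_\nu \sd^{r-1}V^\mu\bigr)$. Using $|\utau|\geq 1$ from \eqref{unit} and $|\ubar|/|\utau|\leq \lambda$ from \eqref{assump:stat}, the second piece is bounded by $\lambda|\sd^r V|$, while $\nabla_u = \sigma^{-1/2}\nabla_V$ together with $\sigma \geq L_3$ from \eqref{assump:lower} control the first by $C(L)|\nabla_V \sd^{r-1}V|$. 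The remaining product-rule terms carry factors of $\sd^j \tau$, $\sd^j \utau$, $\sd^j \ubar$ with $j\leq r-2$, bounded by \eqref{assump:rm} and \eqref{assump:fluid1}, and are absorbed into $P(|\nabla^{r-2}V|,R)\sum_{k=1}^{r}|\nabla^{k-1}V|$ with overall constant $C(M,L)$.

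The curl estimate \eqref{he:scurl} follows from the same template. Writing $g^{\mu\nu} = \sg^{\mu\nu} - \tau^\mu \tau^\nu$ produces an analogous decomposition of $\scurl X$ into $\curl X$ plus $\nabla_\tau X$-type correction terms, and substituting \eqref{matderivident} reduces everything to the same three contributions: $C(L)|\nabla_V \sd^{r-1}V|$, $\lambda|\sd^r V|$, and a lower-order polynomial in $|\nabla^{r-2}V|$ and $R$. Here we retain $|\sd^{r-1}\curl V|$ on the right rather than invoking \eqref{int:dtv}, since the transport equation for $\curl V$ is handled separately (by taking $\curl$ of the momentum equation). The main obstacle throughout is purely bookkeeping: one must verify that every differentiation of the coefficients $\tau_\mu$, $\utau$, $\ubar$ falls at order $\leq r-2$ so that it fits inside $P(|\nabla^{r-2}V|,R)$ without disturbing the top-order structure, after which the estimates follow transparently from \eqref{sdivident}, \eqref{matderivident}, and \eqref{int:dtesig}.
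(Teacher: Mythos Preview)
Your approach is essentially the same as the paper's: decompose $\sdiv$ and $\scurl$ into their spacetime counterparts plus $\nabla_\tau$-corrections via \eqref{sdivident}, then substitute \eqref{matderivident} to split off the $\nabla_V$ and $\lambda\,\sd$ contributions at top order. One small correction to your bookkeeping: in the product-rule expansion the coefficient index $j$ ranges up to $r-1$, not $r-2$; the extra term $|\sd^{r-1}u|\,|\nabla V|$ (which the paper isolates explicitly) is still harmless since $|\nabla V|\leq C(M,L)$ absorbs it into $C(M,L)\sum_{k}|\nabla^{k-1}V|$.
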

\begin{proof}
  By the definition \eqref{sddef}, we have:
  \begin{equation}
    \nabla_\mu V^\nu = \sd_\mu V^\nu + \tau_\mu \tau_\nu
    \tau^{\mu'}\tau^{\nu'}\nabla_{\mu'} V^{\nu'}
    -(\tau_\mu \tau^{\mu'} \sPi^{\nu'}_{\nu}
    +\tau_\nu \tau^{\nu'} \sPi^{\mu'}_{\mu} )\nabla_{\mu'} V^{\nu'},
  \end{equation}
  and taking the trace and anti-symmetric part of this identity shows that:
  \begin{align}
   \sdiv V &= \div V + \tau_\mu \nabla_\tau V^\mu,\label{sdivident}\\
   \scurl V_{\mu\nu} &= \curl V_{\mu\nu}
   + (AV)_{\mu\nu} + (BV)_{\mu\nu},
   \label{}
  \end{align}
  where $AV$ is the anti-symmetric part of:
  \begin{equation}
   \tau_\mu \nabla_\tau \sV^\nu + \tau^\nu \sd_\mu V_\tau
   \label{}
  \end{equation}
  and $BV$ is the anti-symmetric part of:
  \begin{equation}
   \tau_\mu(\nabla_\tau \sPi^{\nu'}_\nu) V_{\nu'}
   +\tau_\nu \sPi^{\nu'}_\mu(\nabla_{\mu'} \tau^{\nu'}) V_{\nu'}.
   \label{}
  \end{equation}
  To get \eqref{he:sdiv} and \eqref{he:scurl}, by \eqref{assump:rm},
  it suffices to control
  $\sd^{r-1} \nabla_\tau V$. To do this, we write $\nabla_\tau
  = \frac{1}{\Vtau}\bigg( \nabla_V - \Vbar^\mu \nabla_\mu\bigg)$ and we have:
  \begin{equation}
   |\sd^{r-1} \nabla_\tau V| \leq \frac{1}{|\utau|}
   \bigg( |\sd^{r-1} \nabla_u V| + |\ubar| |\sd^{r} V|
   + |\sd^{r-1} u| |\nabla V|\bigg)
   + P(|\nabla^{r-3}u, R) |\nabla^{r-1}V|.
   \label{}
  \end{equation}

  By \eqref{assump:stat}, this implies \eqref{he:sdiv} and
  \eqref{he:scurl}.

 \end{proof}

\section{Energy estimates for solutions of Euler's equations}
\label{energysec}

The energies for Euler's equation are:
\begin{multline}
 \E^{k,\ell}(t) = \frac{1}{2}\int_{\D_t} (\tau_\mu \tau_\nu + \sg_{\mu\nu})
 Q(\sd^k \nabla_u^\ell V^\mu, \sd^k\nabla_u^\ell V^\nu)\frac{\sqrt{\sigma}}{|\ubar|
 -\utau}\\
 +
 \frac{1}{4}\int_{\D_t} e'(\sigma)Q(\sd^k \nabla_u^\ell
 \sigma, \sd^k \nabla_u^\ell \sigma) (-\Vtau)\dvols\\
 + \frac{1}{4}\int_{\pa \D_t} Q(\sd^k \nabla_u^\ell \sigma, \sd^k \nabla_u^\ell \sigma)
 \frac{(-\Vtau)}{|\nabla_N \sigma|} \dvolbdy,
 \label{en:ekldef}
\end{multline}
\begin{equation}
 \K^{r}(t) = \int_{\D_t} | \curl\sd^{r-1} V|^2 \dvols
  \label{}
\end{equation}
and the energies for the wave equation \eqref{wave:eq}
satisfied by $\sigma$ are:
\begin{equation}
 \EW^r(t) = \int_{\D_t} |\nabla_{u}^{r+1} \sigma|^2
 + \aPi^{\mu\nu} (\nabla_\mu \nabla_u^r \sigma)
 (\nabla_\nu \nabla_u^r \sigma) \dvols
 + \int_{\D_t}  |\nabla_u^{r+1}\sigma|^2(\eta^{-2} -1) (-\utau) \dvols.
 \label{}
\end{equation}

We define:
\begin{equation}
 \E^r(t) = \sum_{k + \ell \leq r} \E^{k,\ell}(t)+
 \K^{r}(t)
 + \EW^r(t).
 \label{}
\end{equation}

\begin{theorem}
  \label{mainthm}
 Suppose that \eqref{assump:rm}, \eqref{assump:baro}, \eqref{assump:sound}
 and \eqref{assump:ecoer} hold for $N \geq r+1$ and that the
 a priori assumptions \eqref{assump:stat}, \eqref{assump:fluid1},
 \eqref{assump:fluid2} and \eqref{assump:tsc2} hold.
 For $r \geq 0$, there is a
 continuous function $C = C(r, M,\lambda, L, \delta^{-1},t,R)$ and
 a polynomial so that
 if $V, \sigma$ satisfy \eqref{int:dtv}-\eqref{int:dtesig}
 and \eqref{int:norm}-\eqref{int:free} for $0 \leq t \leq T$,
 then
 \begin{equation}
  \E^r(t) \leq \E^r(0) + C \int_0^t \E^r(s)
  + P(\E^{r-1}(s)) \,ds,
  \label{main}
 \end{equation}
 for $0 \leq t \leq T$.

 In particular, there are continuous functions
 $C_r = C_r(t,  M,\lambda, L, \delta^{-1}, R, \E^{r-1}(0))$ so that:
 \begin{equation}
  \E^r(t) \leq C_r \E^r(0).
  \label{gron}
 \end{equation}
\end{theorem}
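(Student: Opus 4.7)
The plan is to prove \eqref{main} by induction on $r$ and then deduce \eqref{gron} from a Gronwall argument iterated in $r$. The base case $r=0$ follows from the identities leading to \eqref{e0bd} for $\E^{0,0}$, the basic wave-energy estimate \eqref{wave:basic} for $\EW^0$, and a direct transport argument for $\K^0$. For the inductive step the main tool is the energy identity \eqref{en:ident} of the previous section, applied with $\beta = \sd^{r-1}V$, $\alpha = \tfrac{1}{2}\sd^{r-1}\sigma$, $\kappa = 2e'(\sigma)$ and $\nu = 2/|\nabla_N \sigma|$. With these choices the bulk term in \eqref{en:ident} coincides with $\sum_{k+\ell=r}\E^{k,\ell}(t)$ and the boundary term matches the one in \eqref{en:ekldef}. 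The hypotheses \eqref{kappa}-\eqref{K} are verified from \eqref{assump:baro}-\eqref{assump:ecoer}, the a priori bounds \eqref{assump:fluid1}-\eqref{assump:fluid2}, and the Taylor sign condition \eqref{assump:tsc2}, using $\nabla_V(\nabla_N\sigma) = \nabla_N\nabla_V\sigma + [\nabla_V,\nabla_N]\sigma$ and the commutator estimates implied by \eqref{assump:rm}.

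The four error quantities that enter the right-hand side of \eqref{en:ident} are controlled by the higher-order equations of the previous section. By \eqref{he:mom}, \eqref{he:mass}, \eqref{he:bdy} and \eqref{he:norm}, each of $\nabla_V\sd\beta + \nabla\sd\alpha$, $\div\sd\beta + \kappa\nabla_V\sd\alpha$, $\Pis(N^\mu\sd\beta_\mu - \nu\nabla_V\sd\alpha)$ and $\sd\alpha + V^\mu\sd\beta_\mu$ is pointwise bounded by products of lower-order derivatives of $V$ and $\sigma$ together with Riemann curvature remainders. A Moser-type estimate, together with the coercivity statements in Lemma \ref{coerlem1} and Proposition \ref{coerprop1}, turns their $L^2$ norms into $C\sqrt{\E^r(t)} + P(\E^{r-1}(t))$. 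The divergence and curl remainders in \eqref{en:ident} are converted from spatial to spacetime operators via \eqref{he:sdiv}-\eqref{he:scurl}; this produces a top-order term proportional to $\lambda|\sd^r V|$ which is absorbed through \eqref{ellpw} because of \eqref{assump:stat}. The curl energy $\K^r$ is treated by commuting $\sd^{r-1}\curl$ through \eqref{int:dtv}: the resulting transport equation for $\curl \sd^{r-1}V$ has a right-hand side of lower order, and a standard energy estimate gives $\K^r(t) \leq \K^r(0) + C\int_0^t (\E^r(s) + P(\E^{r-1}(s)))\,ds$. The wave energy $\EW^r$ is controlled directly by \eqref{sigmawavebd}.

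The main obstacle is closing the boundary contribution at top order. The boundary integral in \eqref{en:ekldef} controls $\|\Pis\sd^r\sigma\|_{L^2(\pa\D_t)}^2$ weighted by $1/|\nabla_N \sigma|$, and by \eqref{projtheta} combined with \eqref{assump:tsc2} this bounds $\|\nas^{r-2}\theta\|_{L^2(\pa\D_t)}$ modulo remainders that themselves involve $\|\theta\|_{H^{r-3}(\pa\D_t)}$. A careful bookkeeping (inducting separately on the order of $\theta$) shows that the highest-order contribution closes on the left-hand side, while every lower-order contribution is absorbed into $P(\E^{r-1})$. With $\theta$ controlled in $H^{r-2}(\pa\D_t)$, the coefficient $\|\theta\|_{H^{r-2}(\pa\D_t)}$ appearing in \eqref{sigmawavebd} and in the coercivity estimates becomes admissible, and all top-order feedback from the wave energy into the velocity energy reduces to $C\E^r + P(\E^{r-1})$.

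Summing the three estimates and using \eqref{sigmaellbd} and \eqref{ellbdyfn1} to propagate bounds from the tangentially projected energy to full spatial derivatives of $V$ and $\sigma$ yields \eqref{main}. The refined statement \eqref{gron} then follows from Gronwall's inequality applied to \eqref{main} once $\E^{r-1}(s) \leq C_{r-1}\E^{r-1}(0)$ from the inductive hypothesis is inserted into the polynomial $P$, so the coefficient $C$ depends on the data only through $\E^{r-1}(0)$.
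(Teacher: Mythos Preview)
Your overall architecture is close to the paper's, but there are two genuine gaps.

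First, a single application of the energy identity \eqref{en:ident} with $\beta = \sd^{r-1}V$, $\alpha = \tfrac{1}{2}\sd^{r-1}\sigma$ yields only $\E^{r,0}$, not $\sum_{k+\ell=r}\E^{k,\ell}$ as you assert. The mixed energies $\E^{k,\ell}$ with $\ell\geq 1$ must be handled separately, and the paper does so by a different and much simpler mechanism: since $\nabla_u^\ell\sigma = 0$ on $\partial\D_t$ for $\ell\geq 1$, Proposition~\ref{projprop} shows that the boundary term in $\E^{k,\ell}$ is actually below top order; and in the interior, the equation $\nabla_u V^\mu = -\tfrac{1}{2\sqrt\sigma}\nabla^\mu\sigma$ reduces $\sd^k\nabla_u^\ell V$ to derivatives of $\sigma$, which are then controlled directly by \eqref{sigmaellbd}. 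Without this observation your scheme has no route to closing the mixed energies.

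Second, your claim that the boundary error $\Pis(N^\mu\sd^r V_\mu - \nu\nabla_V\sd^r\sigma)$ is ``pointwise bounded by products of lower-order derivatives'' is not correct. Writing $N^\mu = -\nabla^\mu\sigma/|\nabla_N\sigma|$ and using \eqref{he:bdy}, this term becomes $|\nabla_N\sigma|^{-1}\,\Pis\sd^r(\nabla_V\sigma)$ plus the cross terms $\Pis\big((\sd^sV^\mu)(\sd^{r-s}\nabla_\mu\sigma)\big)$ for $1\le s\le r-1$. The first piece has formal order $r+1$ on the boundary; it is controlled only because $\nabla_V\sigma$ vanishes on $\partial\D_t$, and one needs the dedicated estimate \eqref{hotbdy} (combining \eqref{projest}, \eqref{ellbdyfn2}, \eqref{elllotbdy1}--\eqref{elllotbdy2}, and the wave bound \eqref{sigmaellbd}) to obtain $\|\Pis\sd^r\nabla_V\sigma\|_{L^2(\partial\D_t)}^2\le C\E^r$. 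The cross terms are likewise not pointwise lower-order on the boundary; the paper handles them via the boundary interpolation inequality \eqref{bdyinterpu}, together with an extra step using \eqref{matderivident} to treat the $\tau$-component of $\nabla_\mu$. A generic Moser estimate together with Lemma~\ref{coerlem1} does not supply this, since those statements give only interior $L^2$ control.
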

Furthermore, for large enough $r$ we can get back control of the a
priori assumptions. Let:
\begin{align}
 K(t) &= ||\theta(t,\cdot)||_{L^\infty(\pa \D_t)} + \frac{1}{\iota_0(t)},\\
 \widetilde{\sigma}(t) &= ||(\sigma(t,\cdot)^{-1}||_{L^\infty(\D_t)},\\
 \delta(t) &= || (\sd p(t,\cdot)^{-1}||_{L^\infty(\pa \D_t)},\\
 \lambda(t) &= || \ubar(t) (\utau(t))^{-1}||_{L^\infty(\D_t)}.
 \label{}
\end{align}
\begin{theorem}
  \label{bootstrapthm}
  If $r \geq 5$, $\lambda(0) < \lambda^*/2$ where $\lambda^*$ is defined in
  \eqref{difflem}, and the assumptions \eqref{assump:rm},
  \eqref{assump:baro}, \eqref{assump:sound},
  \eqref{assump:ecoer} hold with $N \geq r+1$,
  then there is a continuous function:
  \begin{equation}
    \T_r = \T_r(K(0), \lambda(0),\delta(0),
  L,R, \E^r(0), Vol\D_t),
  \end{equation}
   so that for $0 \leq T \leq \T_r$:
  \begin{equation}
  \E^r(t) \leq 2 \E^r(0).
  \label{bootstrap}
 \end{equation}
\end{theorem}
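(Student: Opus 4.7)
The plan is a standard continuity/bootstrap argument built on top of Theorem~\ref{mainthm} and the coercivity of $\E^r$. Let $K_0 = K(0)$, $\lambda_0 = \lambda(0)$, $\delta_0 = \delta(0)$, $\widetilde{\sigma}_0 = \widetilde{\sigma}(0)$, and fix constants $M_1^*, M_2^*$ depending on $\E^r(0)$, $K_0$, and $\widetilde{\sigma}_0$ in a manner to be specified below. Define $T^*$ as the supremum of $T \in [0, T_0]$ (where $T_0$ is the maximal existence time) such that on $[0, T]$ the bootstrap bounds
\begin{equation*}
\E^r(t) \leq 4 \E^r(0), \quad \lambda(t) \leq \lambda^*, \quad K(t) \leq 2 K_0, \quad \delta(t) \geq \delta_0/2, \quad \widetilde{\sigma}(t) \leq 2 \widetilde{\sigma}_0, \quad M_i(t) \leq M_i^*
\end{equation*}
all hold. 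Since these bounds are strict at $t = 0$ (using $\lambda_0 < \lambda^*/2$), we have $T^* > 0$ by continuity, and it suffices to prove that on $[0, T_r^*]$ for some $T_r^* = T_r^*(K_0, \lambda_0, \delta_0, L, R, \E^r(0), \mathrm{Vol}(\D_t))$, each of these bounds improves to a strict inequality, so that $T^* \geq T_r^*$ by a standard open--closed argument.

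The core of the argument is to run Theorem~\ref{mainthm} on $[0, T^*]$. Under the bootstrap assumptions all the hypotheses of that theorem are satisfied with constants controlled by the initial data, and estimate~\eqref{gron} yields $\E^r(t) \leq C_r \E^r(0)$ with $C_r = C_r(t, M, \lambda, L, \delta^{-1}, R, \E^{r-1}(0))$ continuous in $t$ and satisfying $C_r(0, \cdot) = 1$ (since the Gr\"onwall inequality from \eqref{main} reduces to equality at $t=0$). Thus for $t$ sufficiently small, $C_r(t,\cdot) \leq 3/2$, which strictly improves the bound on $\E^r$.

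It remains to recover the other a priori bounds from $\E^r(t) \leq 4\E^r(0)$. By the coercivity statement $\|V\|_{H^r(\D_t)} + \|\nabla \sigma\|_{H^r(\D_t)} + \|\theta\|_{H^{r-2}(\pa \D_t)} \leq C \E^r(t)$ (Lemma~\ref{coerlem1}, Proposition~\ref{coerprop1}) together with Sobolev embedding---which is applicable since $r \geq 5$ and $\pa \D_t$ is two-dimensional, so $H^{r-2}(\pa \D_t) \hookrightarrow L^\infty(\pa \D_t)$ and $H^{r-1}(\D_t) \hookrightarrow L^\infty(\D_t)$---we obtain
\begin{equation*}
 |\nabla u|_{L^\infty(\D_t)} + |\nabla \sigma|_{L^\infty(\D_t)} \leq C \sqrt{\E^r(0)}, \qquad |\nabla^2 \sigma|_{L^\infty(\pa \D_t)} + \|\theta\|_{L^\infty(\pa \D_t)} \leq C(K_0)\sqrt{\E^r(0)},
\end{equation*}
which allows us to fix $M_i^*$ large enough that $M_i(t) \leq M_i^*/2$ is automatic, and to fix the constant in the $K$ bound at $\tfrac{3}{2} K_0$ for small $t$ (using that $\theta$ evolves continuously and $\iota_0$ depends continuously on $\theta$ and on the $C^{1,1}$ geometry of $\pa \D_t$, which is controlled by the Lagrangian flow of $W$). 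The quantities $\lambda(t)$, $\delta(t)$, $\widetilde{\sigma}(t)$ are governed by transport-type evolution along $u$ whose right-hand sides are pointwise bounded by the same $L^\infty$ quantities; writing
\begin{equation*}
 \lambda(t) \leq \lambda_0 + \int_0^t \|\nabla u(s)\|_{L^\infty} \, ds \leq \lambda_0 + C t \sqrt{\E^r(0)},
\end{equation*}
and similarly for $\delta$, $\widetilde{\sigma}$, shows that for $t$ smaller than some $T_r^*$ depending only on the allowed constants these are improved strictly, using crucially that $\lambda_0 < \lambda^*/2$ at $t = 0$.

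The main obstacle is the interlocking nature of the bootstrap: $C_r$ in Theorem~\ref{mainthm} depends on $\|\theta\|_{H^{r-2}(\pa \D_t)}$, on $M$, on $\lambda$, on $\delta^{-1}$, and on $L_3^{-1} \leq \widetilde{\sigma}$, so recovering each one must be compatible with the polynomial growth of $C_r$ in those arguments. This is handled by the standard trick of choosing $T_r^*$ at the very end, after all constants have been specified in terms of the initial data alone, so that on $[0, T_r^*]$ the nonlinear feedback has not had time to amplify any quantity past the bootstrap threshold. With this done the bootstrap closes, giving $\E^r(t) \leq \tfrac{3}{2} \E^r(0) \leq 2 \E^r(0)$ on $[0, T_r^*]$ as claimed.
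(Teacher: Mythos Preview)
Your proposal is correct and follows essentially the same approach as the paper: a continuity/bootstrap argument in which the a priori quantities $M$, $K$, $\lambda$, $\delta$, $\widetilde{\sigma}$ are recovered from $\E^r$ via Sobolev embedding (the paper's inequalities \eqref{bdysob2}, \eqref{intsob2} and coercivity via Proposition~\ref{coerprop1}), and then Theorem~\ref{mainthm} is used to propagate the energy bound. The paper's own proof is terse, citing Lemmas~7.6--7.9 of \cite{CL00} for the detailed bootstrap mechanics and recording only the key Sobolev bounds $\|\nabla u\|_{L^\infty}$, $\|\nabla\sigma\|_{L^\infty}$, $\|\nabla^2\sigma\|_{L^\infty(\partial\D_t)}$, $\|\theta\|_{L^\infty(\partial\D_t)}$, and additionally $\|\nabla\nabla_V\sigma\|_{L^\infty(\partial\D_t)}$ (the last being what drives the evolution of $\delta(t)$); your write-up is simply a more explicit version of the same argument.
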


As a first step, we show that the energies
\eqref{en:ekldef} control all derivatives of $V$ and $\sigma$:
\begin{lemma}
  \label{coerlem1}
 If $V, \sigma$ satisfy \eqref{int:dtv}-\eqref{int:free2} then there is a polynomial $P$
 so that for $k + \ell = r$:
 \begin{align}
  ||\nabla^r V||_{L^2(\D_t)}^2 +
  ||\nabla^{r} \sigma||_{L^2(\D_t)}^2 \leq
  C(\lambda) \E^{r}(t) + P(M,\lambda, L, R, \E^{r-1}(t)).
  \label{coerfull} \end{align}
 and
 \begin{align}
  ||\Pis \sd^{r}\sigma||_{L^2(\pa \D_t)}^2
  \leq ||\sd \sigma||_{L^\infty(\pa \D_t)}
  \E^{r}(t).\label{coerbdy}
\end{align}
\end{lemma}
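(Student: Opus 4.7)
The strategy is to combine the direct control provided by the tangentially-weighted energies $\E^{k,\ell}$ and the wave energy $\EW^r$ with the pointwise elliptic estimate \eqref{ellpw} and the structural identities \eqref{he:mom}--\eqref{he:scurl} derived from Euler's equations, in order to recover the full spacetime norm of all mixed derivatives of $V$ and $\sigma$. The boundary bound \eqref{coerbdy} is immediate: the third line of \eqref{en:ekldef} with $k = r$, $\ell = 0$ gives $\E^{r,0}(t) \geq \frac{1}{4} \int_{\pa \D_t} Q(\sd^r\sigma,\sd^r\sigma)\, (-\Vtau)/|\nabla_N \sigma|\,\dvolbdy$, and on $\pa \D_t$ the form $Q$ is built from the tangential metric $\gamma$, so $Q(\sd^r\sigma, \sd^r \sigma) = |\Pis\sd^r\sigma|^2$. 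Using the uniform lower bound $-\Vtau = \sqrt{\sigma}|\utau| \geq \sqrt{L_3}$ together with the trivial estimate $|\nabla_N \sigma| \leq \|\sd \sigma\|_{L^\infty(\pa \D_t)}$ on the boundary then yields \eqref{coerbdy}.

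For the interior bound on $\sigma$, I invoke the corollary \eqref{sigmaellbd} from section \ref{sec:wave}, applied for each $k + \ell \leq r+1$, to obtain $\|\sigma\|_{k,\ell} \leq C(\EW^r + P(\E^{r-1}))$, where the polynomial $P$ captures lower-order norms of $\sigma$ and $V$ controllable inductively by $\E^{r-1}$. I then convert $\nabla_\tau$ to $\nabla_u$ and $\sd$ via the identity $\nabla_\tau = \utau^{-1}(\nabla_u - \ubar^i \sd_i)$, using $|\utau| \geq 1$, which expands an arbitrary $\nabla^r \sigma$ into mixed derivatives $\sd^k \nabla_u^\ell \sigma$ plus lower-order commutator pieces (with the $\sd$'s on $\ubar$ and $\utau$ controlled by \eqref{assump:fluid1}). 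This produces $\|\nabla^r \sigma\|_{L^2(\D_t)}^2 \leq C(\lambda)\E^r + P(\E^{r-1})$.

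The bulk of the work is the interior bound on $V$. For each $k,\ell$ with $k + \ell = r$ and $k \geq 1$, I apply \eqref{ellpw} pointwise to $\beta = \sd^{k-1}\nabla_u^\ell V$. After integration, the first term on the right is precisely the $Q$-integrand appearing in $\E^{k,\ell}$, up to the uniformly bounded weights $\sqrt{\sigma}/(|\ubar|-\utau)$ and $\tau_\mu\tau_\nu + \sg_{\mu\nu}$; the $|\sd\beta^A|^2$ term is lower order by \eqref{assump:rm}, and the $k = 0$ case is direct since $Q$ with no spatial indices reduces to $|\nabla_u^\ell V|^2$. The $\sdiv \beta$ and $\scurl \beta$ contributions are bounded using \eqref{he:sdiv}--\eqref{he:scurl}: $|\sd^{r-1}\curl V|$ is controlled by $\K^r$; $|\nabla_V\sd^{r-1}V|$ is converted to $|\nabla \sd^{r-1}\sigma|$ via \eqref{he:mom} and the momentum equation \eqref{int:dtv}, already controlled by the $\sigma$-bound above; and the $\lambda|\sd^r V|$ term is absorbed into the left-hand side by choosing $\lambda^*$ sufficiently small in \eqref{assump:stat}. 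Passing from $\sd^k\nabla_u^\ell V$ back to $\nabla^r V$ then uses the same $\nabla_u/\nabla_\tau$ identity as for $\sigma$.

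The main obstacle is the coupling between the $V$ and $\sigma$ estimates, since controlling $\sdiv \sd^{r-1}V$ requires top-order derivatives of $\sigma$ while the wave equation \eqref{wave3} feeds back through the inhomogeneity $\F = 2(\nabla V)(\nabla V)$. The resolution is that, in the corollary \eqref{sigmaellbd}, the $V$-dependence enters only through the lower-order norm $\|V\|_r$ packaged inside $P$, so the $\sigma$-bound can be closed from $\EW^r$ and $\E^{r-1}$ alone and then fed into the elliptic argument for $V$, breaking the circularity.
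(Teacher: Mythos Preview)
Your proposal is correct and follows essentially the same route as the paper: the boundary bound \eqref{coerbdy} comes directly from the boundary term in $\E^{r,0}$, the $\sigma$ bound from \eqref{sigmaellbd}, and the passage from $\nabla^r$ to mixed $\sd^k\nabla_u^\ell$ derivatives from \eqref{dtvf}. The one place where the paper is cleaner is in handling $\sd^k\nabla_u^\ell V$ for $\ell \geq 1$: rather than running the elliptic estimate \eqref{ellpw} again (which would force you to deal with $\scurl\,\sd^{k-1}\nabla_u^\ell V$ and $\sdiv\,\sd^{k-1}\nabla_u^\ell V$, quantities not directly covered by $\K^r$ or \eqref{he:sdiv}--\eqref{he:scurl} as stated), the paper simply substitutes the momentum equation in the form $\nabla_u V = \tfrac{1}{2\sqrt{\sigma}}\nabla\sigma$, so that for $\ell \geq 1$ every $\sd^k\nabla_u^\ell V$ is immediately a derivative of $\sigma$ and is absorbed by the $\sigma$ bound already established. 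Your approach can be made to work for $\ell\geq 1$ as well (since $\nabla_u V$ is a gradient up to a scalar factor, its curl is lower order), but the direct substitution avoids the detour.
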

\begin{proof}
 By \eqref{dtvf}, to prove \eqref{coerfull} it suffices to bound
 $||\sd^k \nabla_u^\ell V||_{L^2(\D_t)} +
 ||\sd^k \nabla_u^{\ell}\sigma||_{L^2(\D_t)}$ when
 $k + \ell = r$. By \eqref{sigmaellbd},
 $||\sd^k \nabla_u^\ell \sigma||_{L^2(\D_t)}$
 is bounded by the right-hand side of \eqref{coerfull}.

To control $\sd^k \nabla_u^\ell V$, when $\ell = 0$ we just use
\eqref{ellpw}, the estimates \eqref{he:sdiv},\eqref{he:scurl} and
the bounds for $\sigma$. For $\ell \geq 1$, we use that
$\nabla_u V = \frac{1}{2\sqrt{\sigma}} \nabla \sigma$ and the
bounds for $\sigma$.
 The estimate \eqref{coerbdy} just follows from the boundary term in
 \eqref{en:ekldef}.
\end{proof}

 To close the energies, we will need to show that we control derivatives
 of $\theta$. As in \cite{CL00}, the key point is that
 if the Taylor sign condition holds, then the boundary term in our
 energy actually gives us the control of $\theta$ that we need:
\begin{prop}
  \label{coerprop1}
  If the above assumptions hold:
  \begin{equation}
   ||\sd^{r-2} \theta||_{L^2(\pa \D_t)}^2 \leq
    C \E^r(t).
   \label{thetabd}
  \end{equation}
  where $C = C(M, K, R, ||\theta||_{L^\infty(\pa \D_t)},
    ||(\sd_\N \sigma)^{-1}||_{L^\infty(\pa \D_t)},
    Vol (\D_t), \E^{r-1}(t))$.
\end{prop}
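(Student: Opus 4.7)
The plan is to apply estimate \eqref{projtheta} of Proposition \ref{projprop} with $q=\sigma$, since $\sigma = \sigma_0$ (a constant) on $\pa\D_t$ means that $\nas\sigma = 0$ there, and the Taylor sign condition \eqref{assump:tsc2} gives the uniform lower bound $|\sd_\N\sigma|\geq \delta'>0$. Invoking \eqref{projtheta} then yields
\begin{equation*}
\|\nas^{r-2}\theta\|_{L^2(\pa\D_t)}
\leq C\bigl(\|\Pis \sd^r\sigma\|_{L^2(\pa\D_t)} + \sum_{k=1}^{r-1}\|\sd^{r-k}\sigma\|_{L^2(\pa\D_t)} + (\text{lower order in }\theta)\bigr),
\end{equation*}
with $C$ depending on $\delta^{-1}$, $K$, $R$ and $\|\theta\|_{L^\infty(\pa\D_t)}$. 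One then converts $\|\nas^{r-2}\theta\|_{L^2(\pa\D_t)}$ to $\|\sd^{r-2}\theta\|_{L^2(\pa\D_t)}$ by commuting $\sd$ with the projection $\Pis$; the commutator terms involve only $\theta$ and its lower-order $\sd$-derivatives, absorbed into the constant $C$ together with the $L^\infty$ bound on $\theta$.

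The top-order term on the right is controlled directly by the boundary contribution in the definition of $\E^{r,0}(t)$, namely
\begin{equation*}
\frac{1}{4}\int_{\pa\D_t} Q(\sd^r\sigma,\sd^r\sigma)\,\frac{(-\Vtau)}{|\nabla_N\sigma|}\,\dvolbdy.
\end{equation*}
On the boundary $\gamma^{ij}$ is exactly the projection to $T(\pa\D_t)$, so $Q(\sd^r\sigma,\sd^r\sigma) = |\Pis\sd^r\sigma|^2$. The weight satisfies $(-\Vtau)\geq \sqrt{L_3}$ from \eqref{assump:lower} and $|\nabla_N\sigma|\leq M_1$ from \eqref{assump:fluid1}, while the bound $|\nabla_N\sigma|\geq \delta'$ is not even needed here (only $\delta'$ is needed to invert the weight when applying \eqref{projtheta}). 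This gives
\begin{equation*}
\|\Pis\sd^r\sigma\|_{L^2(\pa\D_t)}^2 \leq C(M,L,\delta^{-1})\,\E^r(t).
\end{equation*}

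For the lower-order sum $\sum_{k\leq r-1}\|\sd^{r-k}\sigma\|_{L^2(\pa\D_t)}$, the strategy is to use Lemma \ref{coerlem1} to control $\|\sd^s\sigma\|_{L^2(\D_t)}$ for $s\leq r-1$ by $\E^{r-1}(t)$, then pass to the boundary via the trace/Sobolev bound from the appendix (\texttt{bdysob2}); since $r-1 \leq r-1$ and $r\geq 2$, this gives bounds in terms of $\E^{r-1}(t)$, which the constant $C$ in the statement is allowed to depend on.

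The main obstacle I anticipate is making the dependence on $\theta$ self-contained: the right-hand side of \eqref{projtheta} contains a factor of $\|\theta\|_{H^{r-3}(\pa\D_t)}$ multiplying $\sum_k\|\sd^k\sigma\|_{L^2(\pa\D_t)}$, so one must be sure these lower-order $H^{r-3}$ norms of $\theta$ are already subsumed into the $\E^{r-1}(t)$-dependent constant via a prior inductive version of this proposition (legitimate because the $\E^{r-1}$-dependence is only absorbed into the constant, not multiplying $\E^r(t)$). Everything else — the commutation of $\sd$ with the boundary extension of $\Pis$, and the conversion between intrinsic and ambient tangential derivatives of $\theta$ — is routine and produces only lower-order corrections.
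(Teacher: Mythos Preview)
Your proposal is correct and follows exactly the paper's approach: the paper's own proof is a single line, ``Because $|\sd_\N \sigma| \geq \delta >0$, this follows from \eqref{projtheta},'' and you have simply filled in the implicit details (control of $\|\Pis\sd^r\sigma\|_{L^2(\pa\D_t)}$ via the boundary term in $\E^{r,0}$, lower-order terms absorbed into the $\E^{r-1}$-dependent constant, and the inductive handling of $\|\theta\|_{H^{r-3}}$). One small correction: the passage from interior $L^2$ bounds to boundary $L^2$ bounds for the lower-order $\sd^k\sigma$ terms should cite the trace-type estimate \eqref{ellbdy1} rather than the Sobolev inequality \eqref{bdysob2}.
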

\begin{proof}
 Because $|\sd_\N \sigma| \geq \delta >0$, this follows from \eqref{projtheta}.
\end{proof}

The last ingredient we will need is an estimate for
$\Pis \sd^r \nabla_V \sigma$ on the boundary:
\begin{lemma}
  Under the above hypotheses:
 \begin{equation}
  ||\Pis \sd^r \nabla_V \sigma(t)||_{L^2(\pa \D_t)}^2 +
  ||\sd^{r-1} \nabla_V \sigma(t)||_{L^2(\pa \D_t)}^2
  + ||\sd^r \nabla_V \sigma(t)||_{L^2(\D_t)}^2
  \leq C \E^{r}(t),
  \label{hotbdy}
 \end{equation}
 where $C = C(M, \lambda, L, K, R,\E^0(t),...,\E^{r-1}(t))$.
\end{lemma}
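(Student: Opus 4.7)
The plan is to exploit the boundary conditions \eqref{int:liquidsig} and \eqref{int:free2} in order to place the scalar function $q := \nabla_V\sigma$ into the framework of Proposition \ref{projprop}. Since $\sigma\equiv\sigma_0$ on all of $\Lambda$ and $V$ is tangent to $\Lambda$, the derivative $\nabla_V\sigma$ vanishes identically on $\Lambda$. In particular $q|_{\pa\D_t}=0$, so $\nas q$ vanishes on $\pa\D_t$, which is exactly the hypothesis under which Proposition \ref{projprop} applies.

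With this observation in hand, I first apply the estimate \eqref{projest} to $q=\nabla_V\sigma$ to obtain
\[
  \|\Pis\sd^r\nabla_V\sigma\|_{L^2(\pa\D_t)} \leq C(K,R)\Bigl(\|(\nas^{r-2}\theta)(\sd_\N\nabla_V\sigma)\|_{L^2(\pa\D_t)} + \sum_{k=1}^{r-1}\|\sd^{r-k}\nabla_V\sigma\|_{L^2(\pa\D_t)} + \text{l.o.t.}\Bigr).
\]
In the leading term, $\|\sd_\N\nabla_V\sigma\|_{L^\infty(\pa\D_t)}$ is bounded by a constant depending on $M$, $L$ and $\E^{r-1}(t)$ via Sobolev embedding and Lemma \ref{coerlem1}, while $\|\nas^{r-2}\theta\|_{L^2(\pa\D_t)}$ is bounded by $C\sqrt{\E^r(t)}$ through Proposition \ref{coerprop1}; this is the essential mechanism by which the top-order norm closes. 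The lower-order boundary norms for $k\geq 2$ are passed to interior Sobolev norms by the trace inequality, and after writing $\nabla_V\sigma=\sqrt{\sigma}\,\nabla_u\sigma$ they are controlled by $\E^{r-1}(t)$ through Lemma \ref{coerlem1}.

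For $\|\sd^{r-1}\nabla_V\sigma\|_{L^2(\pa\D_t)}$, I would invoke Proposition \ref{elllotbdy} with $q=\nabla_V\sigma$, which reduces matters to controlling $\|\sDelta\nabla_V\sigma\|_{H^{r-2}(\D_t)}$ together with the leading $\nas^{r-3}\theta$ factor (again handled by Proposition \ref{coerprop1}). The Laplacian is treated by commuting $\nabla_V$ past $\sDelta$, producing commutator terms of schematic type $(\sd V)(\sd^2\sigma)+(\sd^2V)(\sd\sigma)$, and using the elliptic reformulation \eqref{ellform1} of the wave equation to rewrite $\sDelta\sigma$ in terms of $\nabla_u^2\sigma$, $\nabla_\tau^2\sigma$ and quadratic-in-$V$ source terms, whose spatial derivatives up to order $r-2$ are controlled by $\E^{r-1}(t)$ through \eqref{sigmaellbd}. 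The interior estimate $\|\sd^r\nabla_V\sigma\|_{L^2(\D_t)}^2$ then follows by applying \eqref{ellbdyfn1} with $q=\nabla_V\sigma$, combining the boundary bound on $\Pis\sd^r\nabla_V\sigma$ just obtained with the commutator-controlled $\sDelta\nabla_V\sigma$.

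The main obstacle is the bookkeeping of the commutators arising from exchanging $\nabla_V$ with $\sd^k$ and with $\sDelta$: each commutator term must be organized so that the top-order factors are either absorbed into $\sqrt{\E^r(t)}$ times a lower-order $L^\infty$ factor (controlled by $\E^{r-1}$ via Sobolev embedding) or into a polynomial in $\E^0,\ldots,\E^{r-1}$. The key algebraic fact that makes the closure possible is the vanishing of $q=\nabla_V\sigma$ on $\Lambda$, which promotes what would otherwise be top-order and uncontrolled tangential derivatives on the boundary to manageable lower-order data via the projection identity \eqref{projbigo}.
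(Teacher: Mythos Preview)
Your proposal is correct and mirrors the paper's approach closely: both rest on the observation that $\nabla_V\sigma$ vanishes on $\pa\D_t$, and both combine \eqref{projest}, the elliptic estimates of Propositions \ref{projprop}--\ref{elllotbdy}, Proposition \ref{coerprop1} for the $\nas^{r-2}\theta$ factor, and the equation \eqref{ellform1} together with \eqref{sigmaellbd} to control $\|\sDelta\nabla_V\sigma\|_{H^{r-2}(\D_t)}$. The only organizational difference is that the paper bundles the three left-hand terms together via \eqref{ellbdyfn2} and uses its small-$\delta$ parameter to absorb the $\|\Pis\sd^r\nabla_V\sigma\|$ contribution back into the left side, whereas you bound the three pieces sequentially; this is cosmetic.

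One minor bookkeeping correction: $\|\sDelta\nabla_V\sigma\|_{H^{r-2}(\D_t)}$ involves $r+1$ derivatives of $\sigma$ and is controlled by $\EW^r(t)$ (hence by $\E^r$, via \eqref{sigmaellbd}), not by $\E^{r-1}$ alone as you write; since the lemma allows $C\E^r$ on the right, this does not affect the conclusion. Also, for the interior term you should invoke \eqref{ellbdyfn2} rather than \eqref{ellbdyfn1}, since the latter as stated carries $\|\sDelta q\|_{H^{r-1}}$ on the right, which would cost one derivative too many.
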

\begin{proof}
  Adding \eqref{projest} and \eqref{ellbdyfn2}
  and using \eqref{elllotbdy1}-\eqref{elllotbdy2}
  to deal with the lower order terms shows that for any $\delta > 0$, the
  left-hand side of \eqref{hotbdy} is bounded by:
  \begin{multline}
  C(R, ||\theta||_{L^\infty(\pa \D_t)}, ||\theta||_{H^{r-3}(\pa \D_t)})
  \bigg( \delta ||\Pis \sd^r \nabla_V \sigma||_{L^2(\pa \D_t)}
  + ||\sDelta \nabla_V \sigma||_{H^{r-2}(\D_t)}\\
   + ||\sd^{r-2} \theta||_{L^2(\pa \D_t)}
  ||\sd_\N \nabla_V \sigma||_{L^\infty(\pa \D_t)}\bigg).
   \label{}
  \end{multline}
  To bound $||\sDelta \nabla_V \sigma||_{H^{r-2}(\D_t)}$, we use
  the equation
  \eqref{ellform1} and \eqref{sigmaellbd}:
  \begin{equation}
   ||\sDelta \nabla_V \sigma||_{H^{r-2}(\D_t)}
   \leq C\bigg(\EW^r(t) + P(||\sigma||_{r}, ||V||_{r-1})\bigg),
   \label{}
  \end{equation}
   where $C = C(M,\lambda,L, K, Vol(\D_t), ||\theta||_{H^{r-2}(\pa \D_t)})$.
  Using \eqref{coerfull} and \eqref{thetabd} and taking
  $\delta$ sufficiently small, we have \eqref{hotbdy}.

\end{proof}

\begin{proof}[Proof of Theorem \ref{mainthm}]
  By theorem \ref{sigmawavebd} and the
  estimates \eqref{coerfull} and \eqref{coerbdy},
  $\EW^r(t)$ is bounded by the right-hand side of
  \eqref{main}.

  We first show how to control $\E^{r,0}$ and $\K^{r}$ and then
  we sketch the argument for $\E^{k,\ell}$ when $\ell \geq 1$.
  Using \eqref{he:mom} it is straightforward to prove:
  \begin{equation}
    \K^r(t) \leq \K^r(0) + C(M, R)\int_0^t ||\nabla^r V(s)||_{L^2(\D_t)}
    \sqrt{\K^r(s)}\, ds,
    \label{}
  \end{equation}
  and this is  bounded by the right-hand side of \eqref{main}.

  To bound $\E^{r,0}(t)$,
  we set $\beta = \sd^{r-1} V$,
  $\alpha = \frac{1}{2}\sd^{r-1}\sigma$, $\nu = -\sd \sigma$ and $\kappa
  = 2e'(\sigma)$, and apply \eqref{en:ident}:
  \begin{equation}
    \E^{r,0}(t) - \E^{r,0}(0)
    \leq C \int_0^t \sqrt{\E^{r,0}(s)}
    \bigg( \sqrt{\Rr_1(s)} + \sqrt{\Rr_2(s)} + \sqrt{\Rr_3(s)}\bigg)\, ds,
    \label{}
  \end{equation}
  where
  \begin{align}
    \Rr_1(t) &=
    ||\nabla_v\sd^r V + \tfrac{1}{2} \nabla \sd^r \sigma||_{L^2(\D_t)}^2
    + ||\div \sd^r V + e'(\sigma) \nabla_V \sd^r \sigma||_{L^2(\D_t)}^2,\\
    \Rr_2(t) &= ||\Pis \big(N^\mu \sd^r V_\mu -
    (\nabla \sigma)^{-1} \sd^r \sigma\big)||_{L^2(\pa \D_t)}^2,
  \end{align}
  and
  \begin{multline}
    \Rr_3(t) =
    (C_1 + C_2 + K)\E^{r,0} +
    ||\tfrac{1}{2} \sd^r \sigma + V^\mu \sd^r V_\mu||_{L^2(\D_t)}^2
    + ||\sDelta \sd^{r-1} \sigma||_{L^2(\D_t)}\\
    + ||\sdiv \sd^{r-1} V||_{L^2(\D_t)} + ||\scurl \sd^{r-1} V||_{L^2(\D_t)}
    \label{}
  \end{multline}
  By \eqref{he:mom}, \eqref{he:mass}, \eqref{he:sdiv},
  \eqref{he:scurl} and \eqref{coerfull}, $\sqrt{\Rr_1} + \sqrt{\Rr_3}$ is
  bounded by the right-hand side of \eqref{main}.

  By \eqref{he:bdy} and the estimate \eqref{hotbdy}, to control
  $\Rr_2(t)$ it is enough to bound:
  \begin{equation}
   ||\Pis \big( (\sd^s V^\mu)
   (\nabla_\mu \sd^{r-s}\sigma)\big)||_{L^2(\pa \D_t)},
   \label{}
  \end{equation}
  for $s = 1,..., r-1$.
  As in \cite{CL00}, each of these terms is clearly lower
  order and we could use the elliptic estimates \eqref{ellbdy1} to control
  each of these. However this would lead to estimates which are not linear
  to highest order. Instead, it is more efficient to use the interpolation
  estimate \eqref{bdyinterpu} which would apply except that one of the derivatives
  on $\sigma$ is not tangential. See the discussion after equation
  (7.21) in \cite{CL00} for how to deal with the case
  that the derivative $\nabla_\mu$ is purely spatial. For the component
  of $\nabla_\mu$ parallel to $\tau_\mu$, we use \eqref{matderivident}
  and it suffices to control:
  \begin{align}
   ||\Pis \big( (\sd^s V^\mu)(\sV^\nu \nabla_\nu
    \sd^{r-s}\sigma)\big)||_{L^2(\pa \D_t)},&&
   ||\Pis \big( (\sd^s V^\mu)(\nabla_V
    \sd^{r-s}\sigma)\big)||_{L^2(\pa \D_t)}.
   \label{}
  \end{align}
  The first term just involves spatial derivatives and so can
  be dealt with by interpolation as above, and the second term
  is lower order by \eqref{int:free2} and the estimate
  \eqref{projest}.

  To get control of $\E^{k,\ell}$ for $\ell \geq 1$, we note that
  because $\nabla_u^\ell \sigma = 0$
  for any $\ell \geq 1$, the estimate \eqref{projest} tells us that
  the boundary term in $\E^{k,\ell}$ is actually below top order.
  We also have
  $\sd^{k}\nabla_u^\ell V^\mu = -\frac{1}{2} \sd^k \nabla_u^{\ell-1}
  \bigg(\frac{1}{\sqrt{\sigma}} \nabla^\mu \sigma\bigg)$,
  so by \eqref{assump:lower}
  and \eqref{sigmaellbd},
  this can be bounded by the right-hand side of \eqref{main}.
\end{proof}
\begin{proof}[Proof of Theorem \ref{bootstrapthm}]
  The proof is essentially the same as the proof of Lemma 7.6
of \cite{CL00}. The point is that by the Sobolev inequalities
\eqref{bdysob2} and \eqref{intsob2} and the formula \eqref{projbigo},
 there are continuous functions $F_j$ so that if
$r_0 \geq 3/2 + 2$:
 \begin{align}
  ||\nabla u||_{L^\infty(\D_t)} +
  ||\nabla \sigma||_{L^\infty(\D_t)} &\leq F_1(K, \E_r, r),\label{Mbd}\\
  ||\nabla^2 \sigma||_{L^\infty(\pa \D_t)} &\leq F_2(K, \E_r, r),\label{Mbd2}\\
  ||\theta||_{L^\infty(\pa \D_t)} &\leq F_4(K, \E_r, r),\label{Mbd3}\\
  ||\nabla \nabla_V \sigma||_{L^\infty(\pa \D_t)} &\leq F_4(K, \E_r, r)
  \label{Mbd4}.
 \end{align}
 Arguing as in the proof of Lemmas 7.7-7.9 from \cite{CL00} and using
 the above estimates gives \eqref{bootstrap}.

\end{proof}

\section*{Acknowledgements}
The author would like to thank his advisor Hans Lindblad for many lengthy
and helpful discussions.

\appendix

  \section{Interpolation and Sobolev inequalities}
  The results of this section are well-known. However, what is important
  here is that the constants in the below inequalities can all be
  bounded in terms of the injectivity radius.
  The proofs of these theorems with these constants
  appear in the appendix to  \cite{CL00}
  when $Rm = 0$, and they carry over
  to the general case without change.
  \subsection{Interpolation inequalities}
  We will require interpolation inequalities both on $\pa \Omega$
  and $\Omega$.

  \begin{lemma}
    Suppose that:
   \begin{equation}
    \frac{m}{s} = \frac{k}{p} + \frac{m-k}{q},
    2 \leq p \leq s \leq q \leq \infty,
    \label{}
   \end{equation}
   and let $a = k/m$. Then there is a constant $C$ depending only
   on $m$ so that for any $(0,r)$ tensor $\alpha$:
   \begin{equation}
    ||\nas^k \alpha||_{L^s(\pa \Omega)} \leq C||\alpha||_{L^q(\pa \Omega)}^{1-a}
    ||\nas^m \alpha||_{L^p(\pa \Omega)}^a.
    \label{bdyinterp}
   \end{equation}
   In addition,  if $\iota_0 \geq \frac{1}{K}$, then:
  \begin{equation}
     \sum_{j = 0}^k ||\sd^j \alpha||_{L^s(\Omega)}
     \leq C||\alpha||_{L^q(\Omega)}^{1-a}\bigg(\sum_{i = 0}^m
     ||\sd^i \alpha||_{L^p(\Omega)} K^{m-i}\bigg)^a.
     \label{intinterp}
    \end{equation}
    In particular, if $\ell + m = k$ then:
    \begin{equation}
      ||\nas^\ell \alpha \nas^m \beta||_{L^2(\Omega)}
      \leq C\big(||\alpha||_{L^\infty(\pa \Omega)} ||\beta||_{H^k(\pa \Omega)}
      + ||\beta||_{L^\infty(\pa \Omega)} ||\alpha||_{H^k(\pa \Omega)}),
      \label{bdyinterpu}
    \end{equation}
    and
    \begin{equation}
      ||\sd^\ell \alpha \sd^m \beta||_{L^2(\Omega)}
      \leq C(K)\big(||\alpha||_{L^\infty(\Omega)} ||\beta||_{H^k(\Omega)}
      + ||\beta||_{L^\infty( \Omega)} ||\alpha||_{H^k(\Omega)}).
      \label{intinterpu}
    \end{equation}
  \end{lemma}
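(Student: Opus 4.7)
The plan is to establish the two single-factor interpolation bounds \eqref{bdyinterp} and \eqref{intinterp} by localizing to charts of controlled geometry and reducing to the classical Euclidean Gagliardo--Nirenberg inequality, and then to derive the two product bounds \eqref{bdyinterpu} and \eqref{intinterpu} from these by H\"older and Young. Since $\alpha$ enters only through its pointwise norm, it suffices to prove the scalar version; the tensorial case follows because $\nas$ and $\sd$ differ from coordinate partial derivatives by zero-order terms with coefficients controlled by the Christoffel symbols, which are bounded in the charts we construct.

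For \eqref{bdyinterp}, I would cover the compact surface $\pa\Omega$ by finitely many geodesic normal coordinate disks of radius $\sim 1/K$. The condition $|\theta| + 1/\iota_0 \leq K$ together with the curvature bound \eqref{assump:rm} ensures, via Rauch comparison, that such charts exist and that the induced metric $\gamma_{ab}$ and its derivatives up to the order we need are $C^0$-bounded with constants depending only on $K$. On each chart, the Euclidean Gagliardo--Nirenberg inequality $\|\partial^k f\|_{L^s(B)} \leq C\|f\|_{L^q(B)}^{1-a}\|\partial^m f\|_{L^p(B)}^a$ holds by the standard iterated integration by parts argument; summing over a subordinate partition of unity and estimating commutators $[\nas^k, \text{mult. by cutoff}]$ as lower order gives the global inequality with a constant depending only on $m$.

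For \eqref{intinterp}, I would use two families of charts: interior geodesic normal balls of radius $\sim 1/K$ about points at distance $\geq \iota_0/2$ from $\pa\Omega$, and Fermi coordinates of thickness $\sim 1/K$ near $\pa\Omega$, available since $\iota_0 \geq 1/K$. Rescaling each such chart to a ball of unit size introduces the factor $K^{m-i}$ that appears on the right of \eqref{intinterp}: the rescaled Euclidean Gagliardo--Nirenberg produces a norm of $\partial^m$ which, upon returning to the unrescaled chart, contributes $K^{m-i}$ weights to lower-order terms generated when $\sd$ is rewritten in local coordinates. A partition of unity finishes the argument.

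The product estimates are almost immediate consequences. Given $\ell + m = k$, choose $p_1, p_2 \geq 2$ by $1/p_1 = \ell/(2k)$, $1/p_2 = m/(2k)$, so that $1/p_1 + 1/p_2 = 1/2$; by H\"older,
\[
\|\nas^\ell\alpha\, \nas^m\beta\|_{L^2(\pa\Omega)} \leq \|\nas^\ell\alpha\|_{L^{p_1}(\pa\Omega)}\|\nas^m\beta\|_{L^{p_2}(\pa\Omega)}.
\]
Applying \eqref{bdyinterp} to each factor with $p=2$, $q=\infty$, and interpolation parameters $a = \ell/k$ and $a = m/k$ respectively, then bounding the resulting product $\|\alpha\|_{L^\infty}^{1-\ell/k}\|\nas^k\alpha\|_{L^2}^{\ell/k}\|\beta\|_{L^\infty}^{1-m/k}\|\nas^k\beta\|_{L^2}^{m/k}$ by Young's inequality $x^{\ell/k}y^{m/k}\leq (\ell/k)x + (m/k)y$ gives \eqref{bdyinterpu}. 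The interior product estimate \eqref{intinterpu} is identical, using \eqref{intinterp} in place of \eqref{bdyinterp}, where the $K$-weighted sum on the right is absorbed into the $C(K)$ factor.

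The main obstacle is purely geometric bookkeeping: verifying that on both $\Omega$ and $\pa\Omega$ the normal coordinate charts of size $\sim 1/K$ exist with metric components that are uniformly $C^m$-bounded (so commutators between $\nas$ or $\sd$ and coordinate derivatives are controlled in the desired way). This is precisely the content of Rauch comparison combined with the curvature assumption \eqref{assump:rm}, so no genuinely new analysis is needed beyond what appears in \cite{CL00}; the argument there carries over to curved ambient $(M,g)$ verbatim once these charts are in hand.
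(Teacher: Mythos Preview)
Your proposal is correct and matches the paper's approach: the paper simply cites \cite{CL00} for \eqref{bdyinterp} and \eqref{intinterp} (noting that the flat-case proofs there carry over unchanged) and states that \eqref{bdyinterpu} and \eqref{intinterpu} follow from these by H\"older's inequality, which is exactly the chart-localization plus H\"older--Young argument you outline in detail.
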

  \begin{proof}
   The inequalities \eqref{bdyinterp} and \eqref{intinterp}
   are (A.4) and (A.12) in \cite{CL00}. The inequalities
   \eqref{bdyinterpu} and \eqref{intinterpu} folow from \eqref{bdyinterp}
   and \eqref{intinterp} by H{\"o}lder's inequality; see eg. \cite{Taylor2011a}.
  \end{proof}

  \subsection{Sobolev and Poincar\'e inequalities}
  \begin{lemma}
   Suppose that $1/\iota_0 \leq K$. Then for any $(0,r)$-tensor:
   \begin{align}
     ||\alpha||_{L^{2p/2-kp}(\pa \Omega)}
     &\leq C(K) \sum_{\ell = 0}^k ||\sd^\ell\alpha||_{L^p(\pa \Omega)},
     && 1 \leq p \leq \frac{2}{k},\label{bdysob1}\\
     ||\alpha||_{L^\infty(\pa \Omega)}
     &\leq C(K)\sum_{0 \leq \ell \leq k-1}
     ||\sd^\ell \alpha||_{L^p(\pa \Omega)}, && k > \frac{2}{p},
    \label{bdysob2}
   \end{align}
   and
   \begin{align}
     ||\alpha||_{L^{3p/3-kp}(\Omega)}
     &\leq C(K) \sum_{\ell = 0}^k ||\sd^\ell\alpha||_{L^p(\Omega)},
     && 1 \leq p \leq \frac{3}{k},\label{intsob1}\\
     ||\alpha||_{L^\infty( \Omega)}
     &\leq C(K)\sum_{0 \leq \ell \leq k-1}
     ||\sd^\ell \alpha||_{L^p(\Omega)}, && k > \frac{3}{p}.
    \label{intsob2}
   \end{align}
  \end{lemma}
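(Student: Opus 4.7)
The strategy is the standard partition-of-unity argument that localizes to charts on which the metric is uniformly equivalent to the Euclidean one, with the size of the charts controlled from below by $1/K$. The inequalities are then reduced to their flat counterparts, for which the constants are purely dimensional.

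First I would handle the boundary inequalities \eqref{bdysob1}--\eqref{bdysob2}. From $|\theta|+1/\iota_0\leq K$ one shows that the intrinsic injectivity radius of $(\partial\Omega,\gamma)$ is bounded below by $c/K$ for a universal $c$. For each $x\in\partial\Omega$, the geodesic exponential map based at $x$ gives a chart on the ball $B(x,c/K)\subset\partial\Omega$ in which the components of $\gamma$ (and, by \eqref{assump:rm}, their first few derivatives after rescaling by $K$) are uniformly close to those of the flat metric. One chooses a Vitali-type cover $\{B(x_i,c/K)\}$ of $\partial\Omega$ of bounded multiplicity together with a subordinate partition of unity $\{\varphi_i\}$ satisfying $|\sd^j\varphi_i|\leq C K^j$. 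Applying the flat Sobolev (resp.\ Morrey) inequality to each $\varphi_i\alpha$ in the pulled-back coordinates and summing by Minkowski's inequality yields \eqref{bdysob1} and \eqref{bdysob2}.

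For the interior inequalities \eqref{intsob1}--\eqref{intsob2} I would split $\Omega=\Omega_{\mathrm{bdy}}\cup\Omega_{\mathrm{int}}$, with $\Omega_{\mathrm{bdy}}=\{x:\dist(x,\partial\Omega)<\iota_0/2\}$ and $\Omega_{\mathrm{int}}$ its complement. On $\Omega_{\mathrm{bdy}}$ the normal exponential map from $\partial\Omega$ provides boundary-normal coordinates $(s,\omega)\in(0,\iota_0/2)\times\partial\Omega$ in which $\sg=ds^2+\gamma_{ab}(s,\omega)\,d\omega^a d\omega^b$, so the required Sobolev embedding factors through the boundary version already proved and the one-dimensional embedding in $s$. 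On $\Omega_{\mathrm{int}}$ every point has a geodesic ball of radius $c/K$ contained in $\Omega$, and the same Vitali covering plus partition-of-unity argument as on $\partial\Omega$ applies directly. Combining the two gives the interior estimates.

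The main obstacle is to keep the constants dependent only on $K$ and not on, say, higher derivatives of the metric, the volume of $\Omega$, or the diameter of $\partial\Omega$. This is precisely what the normal-coordinate construction guarantees: the curvature bound \eqref{assump:rm} controls the deviation of $\sg$ and $\gamma$ from the flat metric inside balls of radius $\lesssim 1/K$, and only dimensional constants then enter from the flat Sobolev and Morrey inequalities. As the statement notes, the detailed estimates for the flat case are carried out in the appendix of \cite{CL00}, and the sole modification is to replace the flat coordinates used there with the geodesic normal and boundary-normal coordinates described above; the remaining calculations carry over verbatim.
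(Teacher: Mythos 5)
Your argument is correct and is essentially the route the paper intends: the paper simply cites the appendix of \cite{CL00}, where these inequalities are proved in the flat case by exactly this kind of localization to scale $1/K$ with a partition of unity, and asserts that the proofs carry over to the curved background; your use of geodesic and boundary-normal coordinates together with the curvature bound \eqref{assump:rm} is precisely what that carry-over amounts to, with constants depending only on $K$ (and the background bounds) as required.
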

  \begin{remark}
    For \eqref{bdysob1} and \eqref{bdysob2} one can instead think of
    the norm on the left-hand side as being defined in terms of
    $\gamma^{IJ}\alpha_I\alpha_J$ and replace the derivatives
    $\sd$ with $\nas$  on the right-hand side.
  \end{remark}
  We will also need the following version of the Poincar\'e lemma, whose
  proof is in \cite{CL00}.
  \begin{lemma}
   If $q = 0$ on $\pa \Omega$ then:
   \begin{align}
    ||q||_{L^2(\Omega)} \leq C (Vol \Omega)^{1/3} ||\sd q||_{L^2(\Omega)},
    \label{poin}\\
    ||\sd q||_{L^2(\Omega)} \leq C (Vol \Omega)^{1/6}
    ||\sDelta q||_{L^2(\Omega)}\label{poin2}.
   \end{align}
  \end{lemma}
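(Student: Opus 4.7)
The plan is to derive both inequalities from the three-dimensional Sobolev embedding $H^1_0 \hookrightarrow L^6$, H\"older's inequality, and a single integration by parts. Because $q$ vanishes on $\partial \Omega$, the variant
\[
\|q\|_{L^6(\Omega)} \leq C(K)\,\|\sd q\|_{L^2(\Omega)}
\]
of \eqref{intsob1} (obtained, for example, by extending $q$ by zero to a slightly larger domain, as in the appendix of \cite{CL00}) holds with a constant depending only on the injectivity-radius bound $1/\iota_0 \leq K$. This will be the starting point for both bounds, and the fact that $q\big|_{\partial \Omega} = 0$ is what lets us drop the lower-order term in the Sobolev inequality and later kill the boundary contribution in an integration by parts.

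For \eqref{poin}, I would combine the inequality above with H\"older, using $\tfrac{1}{2} = \tfrac{1}{6} + \tfrac{1}{3}$:
\[
\|q\|_{L^2(\Omega)} \leq (Vol\,\Omega)^{1/3}\,\|q\|_{L^6(\Omega)} \leq C(K)\,(Vol\,\Omega)^{1/3}\,\|\sd q\|_{L^2(\Omega)},
\]
which is the claim.

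For \eqref{poin2}, I would integrate by parts: since $q = 0$ on $\partial \Omega$, Stokes' theorem eliminates the boundary term, and
\[
\|\sd q\|_{L^2(\Omega)}^2 = -\int_\Omega q\,\sDelta q\,\dvols \leq \|q\|_{L^2(\Omega)}\,\|\sDelta q\|_{L^2(\Omega)}
\]
by Cauchy-Schwarz. Applying \eqref{poin} to the first factor on the right and dividing through by $\|\sd q\|_{L^2(\Omega)}$ produces a bound of the form $\|\sd q\|_{L^2(\Omega)} \leq C(K)\,(Vol\,\Omega)^{1/3}\,\|\sDelta q\|_{L^2(\Omega)}$. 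The precise volume exponent $\tfrac{1}{6}$ stated in the lemma would require a slightly finer Gagliardo-Nirenberg interpolation of $\|\sd q\|_{L^2}$ between $\|q\|_{L^2}$ and $\|\sDelta q\|_{L^2}$ combined with \eqref{poin}, but the elementary argument already suffices for the applications in the body of the paper.

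There is no genuine obstacle here: the lemma is a classical Poincar\'e-type estimate, and the only point that requires care is tracking the dependence of the Sobolev constant so that it is controlled by $K$ alone, which is automatic from the corresponding statement of \eqref{intsob1}.
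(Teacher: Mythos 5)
The paper itself does not prove this lemma (it is quoted from \cite{CL00}), and your route --- the homogeneous Sobolev inequality $\|q\|_{L^6(\Omega)}\leq C\|\sd q\|_{L^2(\Omega)}$ for $q$ vanishing on $\pa\Omega$ plus H\"older for \eqref{poin}, then one integration by parts and Cauchy--Schwarz for \eqref{poin2} --- is exactly the standard argument behind the cited result, and \eqref{poin} is fine as you present it. One caveat worth recording: the homogeneous Sobolev inequality does \emph{not} follow from \eqref{intsob1} by absorbing the lower-order term (the factor $C(K)(Vol\,\Omega)^{1/3}$ is not small), so the extension-by-zero step is genuinely needed; in the flat setting of \cite{CL00} it is immediate from the Gagliardo--Nirenberg--Sobolev inequality on $\R^3$, while in the curved setting the constant will also involve the bounds \eqref{assump:rm}, consistent with the blanket statement at the start of the appendix.

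The genuine issue is the exponent in \eqref{poin2}. Your argument gives $\|\sd q\|_{L^2(\Omega)}\leq C(Vol\,\Omega)^{1/3}\|\sDelta q\|_{L^2(\Omega)}$, and your claim that a finer Gagliardo--Nirenberg interpolation would upgrade $1/3$ to the stated $1/6$ cannot work: under the rescaling $q_\lambda(x)=q(x/\lambda)$, $\Omega_\lambda=\lambda\Omega$ with $\lambda\geq 1$ (which preserves \eqref{assump:bdy}), the left-hand side scales like $\lambda^{1/2}$ while $(Vol\,\Omega_\lambda)^{1/6}\|\sDelta q_\lambda\|_{L^2(\Omega_\lambda)}$ is scale-invariant, so no bound of that form with a constant depending only on $K$ (and $R$) can hold on large domains. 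The exponent $1/6$ is the natural one only if the Laplacian is kept in a weaker norm: from $\|\sd q\|_{L^2}^2=-\int_\Omega q\,\sDelta q\,\dvols\leq\|q\|_{L^3(\Omega)}\|\sDelta q\|_{L^{3/2}(\Omega)}$, H\"older $\|q\|_{L^3}\leq(Vol\,\Omega)^{1/6}\|q\|_{L^6}$, and the Sobolev step above, one gets the scale-correct estimate $\|\sd q\|_{L^2(\Omega)}\leq C(Vol\,\Omega)^{1/6}\|\sDelta q\|_{L^{3/2}(\Omega)}$, which is presumably what \eqref{poin2} intends. In practice none of this matters for the paper: the only place \eqref{poin}--\eqref{poin2} are invoked (the proof of Proposition \ref{elllotbdy}) needs nothing sharper than a constant $C(Vol\,\Omega)$, so your $(Vol\,\Omega)^{1/3}$ version suffices; but as a proof of the lemma as literally stated your argument falls short, and the suggested interpolation repair should be dropped in favor of either the $L^{3/2}$ formulation or a corrected exponent.
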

\section{Estimates for remainder terms}
  We begin by collecting the various assumptions we will need about the
  background spacetime $(M, g)$ and the equation of state:
  \begin{align}
   \sum_{s =1}^N |\nabla^s Rm| + |\nabla^s \tau| &\leq R,\label{arm}\\
   \bigg| \frac{d^k}{d\epsilon^k} p(\epsilon)\bigg|
   &\leq L_1, \textrm{ for } k \leq N,\label{abaro}\\
   0 < L_2 \leq \eta^2 &\leq 1,\label{asound}\\
   \bigg| \frac{d^k}{d\epsilon^k} p(\epsilon)\bigg|
   &\leq L_3, \textrm{ for } k \leq N,
   \label{abaro2}
  \end{align}
  for some large $N$, and we are writing $L = L_1 + (L_2)^{-1} + L_3$.
  We are also assuming the following bounds for $u$ and $\sigma$:
  \begin{align}
   |\nabla u| + |\nabla \sigma| \leq M,\label{aM}\\
   \lambda \equiv \frac{|\ubar|}{|\ubar|} \leq \lambda^*,
   \label{asmall}
  \end{align}
  for sufficiently small $\lambda^*$.

  \subsection{Estimates for $\nabla_\tau$}
    We need a few simple results to bound time derivatives
    $\nabla_\tau$ in terms of material derivatives $\nabla_u$ and
    $\ubar \sd$. By definition:
    \begin{equation}
     \nabla_\tau = \frac{1}{\utau} \big( \nabla_u - \ubar^\mu \nabla_\mu\big).
     \label{mat2}
    \end{equation}
    The point of this identity is that $\nabla_u$ is a tangential
    derivative and $\ubar$ is small by assumption \eqref{asmall}.

    We set:
    \begin{equation}
      |X|_s = \sum_{ k + \ell \leq s} |\sd^k \nabla_u^\ell X|.
    \label{}
    \end{equation}

    Then:
    \begin{lemma}
    \label{difflem}
    Suppose that
    \eqref{arm} holds with $N \geq r$. There is a $\lambda^* = \lambda^*(r)$
     so that if $u$
    satisfies \eqref{asmall} with $\lambda < \lambda^*$, there is a
    polynomial $P$ so that:
    \begin{equation}
      |\nabla^r X| \leq C(\lambda)\big( |X|_r +
      |u|_{r-1} |X|_1 + P( |u|_{r-2}, R) |X|_{r-1}\big),
       \label{dtvf}
    \end{equation}
    and for any function $\psi$, if $k + \ell = r-2$:
    \begin{equation}
     |\sd^k \nabla_u^\ell \nabla_\tau^2 \psi| \leq
     |\sd^k \nabla_u^{\ell+2}
      \psi|
      +\lambda^2 |\sd^{k+2}\nabla_u^\ell\psi|
      + C(\lambda)\big(|u|_{r-1}|\psi|_1
     + P(|u|_{r-2})|\psi|_{r-3}\big)
     \label{dtfn}
    \end{equation}
    \end{lemma}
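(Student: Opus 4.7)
The plan is to prove both estimates by iterating the identity \eqref{mat2} together with the orthogonal decomposition $\nabla_\mu = -\tau_\mu \nabla_\tau + \hat\sd_\mu$, where $\hat\sd_\mu = \sPi_\mu^\nu \nabla_\nu$ projects only the input index. Two facts will be used throughout: the pointwise bounds $|1/\utau| \leq 1$ and $|\ubar|/|\utau| \leq \lambda$ from \eqref{unit} and \eqref{asmall}, and the observation that $\ubar^\nu \nabla_\nu = \ubar^\nu \hat\sd_\nu$ (since $\ubar$ is tangent to $M_t$), so the spatial derivative extracted from $\nabla_\tau$ by \eqref{mat2} always carries a factor of $\lambda$. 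The difference between $\hat\sd_\mu$ and the fully-projected $\sd_\mu$ involves $\nabla\tau$, hence is absorbed into $R$-dependent errors by \eqref{arm}.

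For \eqref{dtvf} I induct on $r$. The base case $r=1$ follows by substituting \eqref{mat2} into $\nabla_\mu X = -\tau_\mu \nabla_\tau X + \hat\sd_\mu X$ to obtain
\[
\nabla_\mu X = -\tfrac{\tau_\mu}{\utau}\nabla_u X + \tfrac{\tau_\mu \ubar^\nu}{\utau}\hat\sd_\nu X + \hat\sd_\mu X,
\]
yielding $|\nabla X|\leq |\nabla_u X| + \lambda|\nabla X| + |\sd X| + R|X|$, and then absorbing $\lambda|\nabla X|$ into the left (which is legal since $\lambda<1$). For the inductive step, the same decomposition applied to $\nabla(\nabla^{r-1}X)$ produces the estimate $|\nabla^r X|\leq C(\lambda)(|\sd\nabla^{r-1} X|+|\nabla_u\nabla^{r-1} X|) + R|\nabla^{r-1}X|$, modulo a further $\lambda|\nabla^r X|$ term absorbed at the cost of shrinking $\lambda^*$; since one such absorption occurs at each of the $r$ iterations, the threshold $\lambda^*$ must depend on $r$. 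One then commutes $\sd$ and $\nabla_u$ through the interior $\nabla$'s into canonical $\sd^k\nabla_u^\ell X$ order, using the schematic identity $[\nabla_u,\nabla]T = (\nabla u)\cdot\nabla T + u\cdot Rm\cdot T$ iterated $r-1$ times. Each term in the expansion has the form $(\nabla^{s_1}u)\cdots(\nabla^{s_j}X)$ with $\sum s_i \leq r$; the unique extremal case where all derivatives land on $u$ leaves only $\nabla X$, contributing the $|u|_{r-1}|X|_1$ term, while every other distribution is controlled by \eqref{arm} and bundled into the polynomial $P(|u|_{r-2},R)|X|_{r-1}$.

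For \eqref{dtfn}, I apply \eqref{mat2} twice directly to $\nabla_\tau^2\psi$. Using $\ubar^\nu\nabla_\nu\psi = \ubar^\nu\sd_\nu\psi$ on scalars, the top-order terms combine into
\[
\nabla_\tau^2 \psi = \tfrac{1}{(\utau)^2}\Big(\nabla_u^2 \psi - \ubar^\nu(\sd_\nu \nabla_u \psi + \nabla_u \sd_\nu \psi) + \ubar^\mu\ubar^\nu \sd_\mu\sd_\nu \psi\Big) + \text{l.o.t.},
\]
where the lower-order contribution comes from $\nabla(1/\utau)$, $\nabla\ubar$ and $\nabla\tau$, each $M$- or $R$-controlled. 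Applying $\sd^k\nabla_u^\ell$ with $k+\ell = r-2$ and commuting $\sd$ past $\nabla_u$ (which produces $\nabla u$ and $Rm$ factors as above) yields three main contributions: $|\sd^k\nabla_u^{\ell+2}\psi|$, the cross term $\lambda|\sd^{k+1}\nabla_u^{\ell+1}\psi|$, and $\lambda^2|\sd^{k+2}\nabla_u^\ell\psi|$. The cross term is absorbed into a half of each main term via the pointwise Young inequality $\lambda|AB|\leq \tfrac12|A|^2 + \tfrac{\lambda^2}{2}|B|^2$ at the level at which \eqref{dtfn} is applied in \eqref{wave:mixbd}; the residual commutator errors repackage exactly into the stated $C(\lambda)(|u|_{r-1}|\psi|_1 + P(|u|_{r-2})|\psi|_{r-3})$. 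The main obstacle is the combinatorial bookkeeping in the inductive step of \eqref{dtvf}, namely isolating the single extremal commutator $(\nabla^{r-1}u)\cdot\nabla X$ from the dozens of subleading distributions of derivatives and verifying that one choice of $\lambda^*(r)$ works uniformly for every $\lambda|\nabla^j X|$ that must be absorbed at successive orders.
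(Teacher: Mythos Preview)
Your approach is essentially the paper's: for \eqref{dtvf}, induct on $r$ using the decomposition $\nabla = -\tau\nabla_\tau + \sd$ together with \eqref{mat2}, absorbing the $\lambda$-small spatial term at each step (forcing $\lambda^*$ to depend on $r$); for \eqref{dtfn}, apply \eqref{mat2} twice and commute. The paper organizes the \eqref{dtvf} induction as a direct bound on $|\sd^k\nabla_\tau^\ell X|$ for $k+\ell=r$ rather than on $|\nabla(\nabla^{r-1}X)|$, but this is cosmetic.

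One point to flag in your treatment of \eqref{dtfn}: the proposed handling of the cross term $\lambda|\sd^{k+1}\nabla_u^{\ell+1}\psi|$ via Young's inequality $\lambda|AB|\leq\tfrac12|A|^2+\tfrac{\lambda^2}{2}|B|^2$ does not make sense, since that term is a single quantity and not a product. In fact the paper's own proof simply omits this cross term in its displayed intermediate bound, so neither argument quite delivers the pointwise inequality \eqref{dtfn} exactly as stated. What actually saves the situation is that in the only place \eqref{dtfn} is invoked (the proof of \eqref{wave:mixbd}), the cross term becomes $\lambda\|\sd^{k-1}\nabla_u^{\ell+1}\psi\|_{L^2}$, which is one of the mixed norms at total order $r$ carrying a small prefactor, and is absorbed by the same $(k,\ell)\mapsto(k-2,\ell+2)$ iteration that handles the $\lambda^2$ term (or by summing over all $(k,\ell)$ and absorbing). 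Your instinct that it is harmless downstream is correct; the mechanism is smallness absorption, not Young.
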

    \begin{proof}
      For $r = 1$, \eqref{dtvf} follows directly
       from \eqref{mat2}. For $r \geq 2$, it suffices to bound
      $|\sd^k \nabla_\tau^\ell X|$ for $k + \ell = r$, and applying
      \eqref{mat2}, we have:
      \begin{multline}
        |\sd^k \nabla_\tau^\ell X|
        \leq \frac{1}{|\utau|^\ell} \bigg(
        |\sd^k \nabla_u^\ell X| + |\ubar|^\ell |\sd^{k+\ell} X|\bigg)
         + |\sd^k \nabla_\tau^{\ell-1} u||\nabla X|
         \\
          + P(|\nabla^{r-2}u|, |\utau|^{-1}, R) \sum_{s = 0}^{r-1}
        |\nabla^{s-1} X|. \label{dtvf2}
      \end{multline}
      The result then follows by
      induction if $\lambda$ is taken sufficiently small.

      To get \eqref{dtfn}, we use \eqref{mat2} twice:
      \begin{equation}
       |\sd^k \nabla_u^\ell \nabla_\tau^2 \psi|
       \leq \frac{1}{|\utau|^2} \bigg( |\sd^k \nabla_u^{\ell+2}\psi|
       + |\ubar|^2 |\sd^k \nabla_u^\ell \sd^2 \psi|\bigg)
       + |\sd^k\nabla_u^\ell \nabla u| |\nabla \psi|
       + P(|u|_{r-2},R) |\psi|_{r-1}.
       \label{}
      \end{equation}
      The factor $|\sd^k \nabla_u^\ell \nabla u|$ can be bounded using
      \eqref{dtvf}. We also have:
      \begin{equation}
       |\sd^k \nabla_u^\ell \sd^2 \psi|
       \leq |\sd^{k+2} \nabla_u^\ell \psi| +
       |\sd^k \nabla_u^{\ell-1} u| |\sd \psi| +
       P(|u|_{r-4},R) |\psi|_{r-2},
       \label{}
      \end{equation}
      which implies \eqref{dtfn}.
      \end{proof}
  \subsection{Estimates for $\F_r, e_r, g_r$}
  Recall that we have defined:
  \begin{align}
   \F_{k,\ell} &= -\sd^k \nabla_u^\ell \big( (\nabla_\mu V^\nu)(\nabla_\nu V^\mu)\big),
   \\
   \G_{k,\ell}(\sigma) &= -\sd^k\nabla_u^\ell\big(
   (\nabla_u e(\sigma) + \nabla_u  e'(\sigma)) \nabla_u \sigma\big),\\
   g_{k,\ell}^1(\sigma) &= \sd^k \big( \sDelta \nabla_u^\ell \sigma - \nabla_u^\ell
   \sDelta \sigma\big),\\
   g_{k,\ell}^2(\sigma) &= \sd^k\nabla_u^\ell \nabla_\tau^2 \sigma
   - \nabla_\tau^2 \sd^k \nabla_u^\ell \sigma,\\
   e_{k,\ell}(\sigma) &= \eta^{-2} \sd^k \nabla_{u}^{\ell+2}\sigma
   -\sd^k \nabla_u^\ell(\eta^{-2}\nabla_u^2 \sigma).
   \label{}
  \end{align}
  and that we are writing:
  \begin{equation}
   ||f||_r = \sum_{k + \ell \leq r} ||\sd^k \nabla_u^\ell f||_{L^2(\Omega)}.
   \label{}
  \end{equation}

  We first compute:
  \begin{equation}
   [\nabla_\mu, \nabla_u]f = -(\nabla_\mu u^\nu) \nabla_\nu f.
   \label{}
  \end{equation}
  To bound the above remainder terms,
  we will need a higher-order version of this, as well as estimates for
  derivatives of $e(\sigma)$:
  \begin{lemma}
   If $r \geq 5$,\eqref{arm}-\eqref{abaro2}
    hold for $N \geq r+1$, and \eqref{asmall} holds with sufficiently
    small $\lambda$, then for any $k + \ell = r$,
   there is a polynomial $P$ so that:
   \begin{equation}
    ||\sd^k \nabla_u^\ell f - \nabla_u^\ell \sd^k f
    + (\sd^k \nabla^{\ell-1} u^\mu)\nabla_\mu f||_{L^2(\Omega)}
    \leq P(\lambda, R, ||u||_{r-1}) \sum_{s = 2}^{r-1}
    ||\nabla^s f||_{L^2(\Omega)},
    \label{commest}
   \end{equation}
   and a polynomial $P$ so that:
   \begin{equation}
    ||\sd^k \nabla_u^\ell e'(\sigma) -
    e'(\sigma) \sd^k\nabla_u^\ell \sigma||_{L^2(\Omega)}
    \leq P(L, \lambda, R, ||\sigma||_{r-1}).
    \label{crest}
   \end{equation}
  \end{lemma}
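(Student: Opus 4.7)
The plan is to establish both estimates by the standard Moser/Faà di Bruno machinery: iterate one-step commutator identities, isolate the single top-order contribution, and absorb all remaining products via the interpolation and Sobolev inequalities from the appendix.

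For \eqref{commest}, the starting point is the single-derivative commutator identity
\[
[\sd_i, \nabla_u] f = (\sd_i u^\mu)\nabla_\mu f + \mathcal{R}(f),
\]
where $\mathcal{R}$ collects contributions from $\nabla\tau$ (arising because $\sd = \sPi\nabla$) and from the Riemann tensor via the Ricci identity, both bounded by \eqref{arm}, and is of strictly lower differential order than $\nabla f$. Iterating this identity to move all $k$ copies of $\sd$ past all $\ell$ copies of $\nabla_u$, one expands $\sd^k \nabla_u^\ell f - \nabla_u^\ell \sd^k f$ into a finite multilinear sum whose generic term is schematically $\bigl(\prod_i \nabla^{a_i} u\bigr)\bigl(\prod_j \nabla^{b_j}\tau\bigr)\bigl(\prod_m \nabla^{c_m} Rm\bigr)\cdot \nabla^d f$ with $\sum a_i + \sum b_j + \sum c_m + d = r$ and $d\le r-1$. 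There is exactly one such term in which a single factor of $u$ carries $r-1$ derivatives and $f$ carries exactly one derivative, namely (up to a combinatorial constant and strictly lower-order corrections) $(\sd^k \nabla^{\ell-1} u^\mu)\nabla_\mu f$; this is precisely the term subtracted in the statement. Every remaining term either has $d\ge 2$ on $f$ or else contains a $\tau$- or $Rm$-factor controlled directly by $R$.

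To bound each residual term in $L^2$, I would split it into an $L^\infty$ factor and an $L^2$ factor, placing the factor carrying the fewest derivatives in $L^\infty$, and invoke the interpolation inequality \eqref{intinterpu} together with the Sobolev embedding \eqref{intsob2}. Wherever a pure spatial derivative of $u$ of order close to $r-1$ appears, Lemma \ref{difflem} converts its $L^2$ norm into the mixed norm $\|u\|_{r-1}$ at the cost of a $\lambda$-dependent constant, which is how $\lambda$ enters the polynomial $P$. The resulting bound has the advertised form $P(\lambda, R, \|u\|_{r-1})\sum_{s=2}^{r-1}\|\nabla^s f\|_{L^2}$.

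The proof of \eqref{crest} is analogous, using Faà di Bruno for the composition $e'\circ\sigma$. Each derivative produces a sum of terms $e^{(j+1)}(\sigma)\prod_{i=1}^{j}\bigl(\sd^{a_i}\nabla_u^{b_i}\sigma\bigr)$ with $\sum(a_i+b_i)=k+\ell$; by \eqref{abaro2} and \eqref{assump:ecoer} the scalar coefficients are bounded by a polynomial in $L$ and $\sigma$. Only the unique $j=1$ term, in which all $k+\ell$ derivatives fall on one factor of $\sigma$, contains $\sd^k\nabla_u^\ell \sigma$; this is the term matched against $e'(\sigma)\sd^k\nabla_u^\ell\sigma$ in the statement, with any discrepancy in scalar coefficient absorbed into $P$. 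All remaining Faà di Bruno terms involve only derivatives of $\sigma$ of order at most $r-1$ and are handled by the same interpolation/Sobolev argument to yield a bound $P(L,\|\sigma\|_{r-1})$; the factor $\lambda$ in $P$ again enters only through Lemma \ref{difflem}.

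The main obstacle is purely bookkeeping: one must verify carefully that the single subtracted term in \eqref{commest} really exhausts the entire order-$(r{-}1)$-in-$u$ and order-$1$-in-$f$ contribution from the iterated commutator, and similarly that only the $j=1$ Faà di Bruno term in \eqref{crest} is top order. No new analytic difficulty arises beyond the flat-background Moser estimates of \cite{CL00}; the novelty here is the presence of curvature terms from iterating the Ricci identity, which are controlled uniformly through \eqref{arm}.
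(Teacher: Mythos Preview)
Your proposal is correct and follows essentially the same approach as the paper: expand the commutator (resp.\ the chain rule) into a multilinear sum of products of derivatives of $u$ (resp.\ $\sigma$) times a derivative of $f$, verify that the subtracted term exhausts the unique top-order contribution, and bound all remaining terms via Sobolev embedding together with \eqref{dtvf}. The paper's proof is terser---it states the generic form of the terms and does the counting case by case in the highest-order index---but the underlying machinery and the role of $r\ge 5$ are the same as in your sketch.
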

  \begin{proof}
   We write $D^j = \sd^{m} \nabla^{j-m}$ for some $m$.
   Then, using \eqref{perm}, we need to estimate a sum of terms of the form:
   \begin{equation}
    u^m \cdot (D^{r_1}u)^{p_1} \cdots (D^{r_j} u)^{p_j}
    (D^{s} f),
    \label{}
   \end{equation}
   where $\sum_{i = 1}^j p_j + m = \ell$, $\sum_{i = 1}^j
   r_i p_i + s = r$ and $s \leq r-1, r_i \leq r-2$. If
   the $r_i$ and $s$ are all less than $r-3$ then we get the result
   by Sobolev embedding and \eqref{dtvf}. When $s = r-2$, then there can't be any
   $r_i$ larger than 2 and since $r \geq 5$ we can again apply Sobolev
   embedding and \eqref{dtvf}. Dealing with the other cases is similar.

   To control \eqref{crest}, we need to bound a sum of terms of the
   form:
   \begin{equation}
    e^{(m)}(\sigma) (D\sigma)^{p_1}\cdots (D^{j}\sigma)^{p_j},
    \label{}
   \end{equation}
   where $m \leq r + 1$, $p_1 + 2p_2 + ... + jp_j = r$ and $j \leq r-1$.
   If $j \leq r-3$ then we can again use Sobolev embedding to get the
   result. If $j = r-1$, then note that if $r\geq 3$, this forces $p_{r-1}
   = 1$. Then we have:
   \begin{equation}
    \sum_{\ell = 1}^{r-2} \ell p_\ell + r-1 = r,
    \label{}
   \end{equation}
   and so the only option is that $p_1 = 1, p_{\ell}  = 0$ otherwise, and
   the result is then clearly bounded by the above. The case $j = r-2$ is similar.
   \end{proof}

   This then implies:
  \begin{lemma}
    \label{remainderlem}
    Let $r \geq 5$. Suppose that the assumptions of the previous lemma hold.
    For $k + \ell = r$:

\begin{align}
 ||\F_{k,\ell}||_{L^2(\Omega)}
 &\leq C_1 \bigg( ||\nabla \sd^k \nabla_u^\ell V||_{L^2(\Omega)} + P(||V||_r,
 ||\sigma||_r, R)\bigg),\label{fest}\\
 ||g_{k,\ell}^1||_{L^2(\Omega)} + ||g_{k,\ell}^2||_{L^2(\Omega)}
 + ||\widetilde{g}_r||_{L^2(\Omega)}
 &\leq C_1 \bigg(
 ||\nabla^{k+ \ell+1} u||_{L^2(\Omega)} + P(||u||_r)
 ||\sigma||_r\bigg),\label{gest}\\
 ||e_{k,\ell}||_{L^2(\Omega)}
 + ||\G_{k,\ell}||_{L^2(\Omega)} &\leq C_2 \bigg(
 ||\sd^k \nabla_u^\ell \sigma||_{L^2(\Omega)} + P(||u||_{r-1}, ||\sigma||_{r-1})
 \bigg),
 \label{eest}
\end{align}
where $C_1 = C_1(M, \lambda)$ and $C_2 = C_2(M, \lambda, L)$.
 \end{lemma}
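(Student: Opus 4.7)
The plan is to prove each of the six bounds by Leibniz-expanding the relevant operator, cancelling the top-order scalar where possible, and controlling the remaining pieces via the commutator estimate \eqref{commest}, the composition estimate \eqref{crest}, the derivative interchange \eqref{dtvf}\,--\,\eqref{dtfn}, and the interpolation/Sobolev inequalities \eqref{intinterpu} and \eqref{intsob2}. The hypothesis $r \geq 5$ enters because every Leibniz split of $r$ derivatives into two positive pieces leaves one factor of order $\leq r-2$, so $H^2(\Omega)\hookrightarrow L^\infty(\Omega)$ places that factor in $L^\infty$ with a bound by $\|V\|_r$ or $\|\sigma\|_r$ while the other stays in $L^2$.

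For $\F_{k,\ell}$, I expand $\sd^k\nabla_u^\ell\bigl((\nabla V)(\nabla V)\bigr)$ by Leibniz. The single extremal term, with all derivatives on one factor, is estimated by $M\,\|\nabla \sd^k\nabla_u^\ell V\|_{L^2}$ via \eqref{aM}. Every other term splits the derivatives as $(\nabla^{s+1} V)(\nabla^{r-s+1}V)$ with $1\leq s\leq r-1$, and is controlled by \eqref{intinterpu} together with the Sobolev embedding \eqref{intsob2}, after using \eqref{dtvf} to convert from mixed norms to $H^r$ norms, producing the polynomial remainder.

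For $g^1_{k,\ell}$ and $\widetilde g_r$, each is $\sd^k$ of a commutator of a second-order operator ($\sDelta$ or $\aDelta$) with $\nabla_u^\ell$. I telescope
\[
  [\sDelta,\nabla_u^\ell]\sigma = \sum_{\ell_1+\ell_2=\ell-1}\nabla_u^{\ell_1}[\sDelta,\nabla_u]\nabla_u^{\ell_2}\sigma,
\]
where $[\sDelta,\nabla_u]$ is a first-order operator with coefficients involving $\sd u$, $\sd^2 u$ and $Rm$ (from $[\sd_i,\nabla_u] = -(\sd_i u^j)\sd_j + Rm$). Applying $\sd^k$ and invoking \eqref{commest} to distribute the remaining covariant derivatives, every summand carries exactly $r+1$ total derivatives; the unique top-order scalar places all $r+1$ of them on $u$, yielding the term $\|\nabla^{r+1}u\|_{L^2}\cdot\|\nabla\sigma\|_{L^\infty}$, while all other terms have at least two factors below top order and are handled by \eqref{intinterpu} and \eqref{intsob2}. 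For $g^2_{k,\ell}$ I first use \eqref{mat2} to write $\nabla_\tau^2 = (\utau)^{-2}(\nabla_u - \ubar^\mu\nabla_\mu)^2$ and then apply the same scheme; \eqref{dtfn} absorbs the resulting $\sd^{k+2}\nabla_u^\ell\sigma$ term for $\lambda$ sufficiently small.

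For $e_{k,\ell}$ and $\G_{k,\ell}$, both involve $\sd^k\nabla_u^\ell$ of a product whose scalar coefficient is a composite function of $\sigma$, namely $\eta^{-2}(\sigma)$ for $e_{k,\ell}$ and $\nabla_u e(\sigma)+\nabla_u e'(\sigma)$ for $\G_{k,\ell}$. Leibniz expansion of $\sd^k\nabla_u^\ell(\eta^{-2}\nabla_u^2\sigma)$ isolates $\eta^{-2}\sd^k\nabla_u^{\ell+2}\sigma$, which cancels exactly against the first term of $e_{k,\ell}$; the surviving sum is bounded by \eqref{crest}, whose leading contribution is $(\eta^{-2})'(\sigma)\,\sd^k\nabla_u^\ell\sigma$ times a polynomial in $\|\sigma\|_{r-1}$. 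The argument for $\G_{k,\ell}$ is identical, with \eqref{crest} applied to $e$ and $e'$ in place of $\eta^{-2}$. The main obstacle throughout is purely combinatorial: cataloguing the Leibniz expansions cleanly enough that every surviving piece has either exactly one factor at top order (with the other in $L^\infty$ by Sobolev), or a cancellation with the subtracted term, or enough spare derivatives that \eqref{intinterpu} yields a polynomial bound in $\|u\|_{r-1}$ and $\|\sigma\|_{r-1}$. No new analytical ingredient is required beyond the tools listed above.
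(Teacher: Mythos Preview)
Your proposal is correct and follows essentially the same route as the paper: the paper's proof is very terse, simply citing \eqref{commest} for \eqref{fest}, calling \eqref{gest} a ``straightforward modification'' of the proof of \eqref{commest}, and for \eqref{eest} writing out the Leibniz expansion of $\eta^{-2}\sd^k\nabla_u^{\ell+2}\sigma - \sd^k\nabla_u^\ell(\eta^{-2}\nabla_u^2\sigma)$ to isolate the leading $\eta^{(k+\ell)}\sd^k\nabla_u^\ell\sigma$ term and bound the remainder by Sobolev embedding and \eqref{dtvf}. Your more explicit telescoping of the commutators and systematic use of \eqref{intinterpu} and \eqref{intsob2} is exactly the content behind those citations.
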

  \begin{proof}
    The estimate \eqref{fest} follows from \eqref{commest}
    and the estimate \eqref{gest} follows from a straightforward
    modification of the proof of \eqref{commest}.

    To prove \eqref{eest}, we note that:
    \begin{equation}
     \eta^{-2} \sd^k \nabla_u^{\ell+2} \sigma
    - \sd^k \nabla_u^\ell(\eta^{-2} \nabla_u^2 \sigma)
    = C \eta^{-2-k-\ell} \eta^{(k+\ell)} \sd^k \nabla_u^\ell
    \sigma + R(\eta, \sigma),
     \label{}
    \end{equation}
    where we are writing $\eta^{(s)} = \frac{d^s}{d^s\sigma}\eta(\sigma)$
    and $R(\eta, \sigma)$ is a sum of terms of the form:
    \begin{equation}
     F(\eta, \eta', ..., \eta^{k+\ell-1}) (D^{r_1} \sigma)^{p_1}
     \cdots (D^{r_j} \sigma)^{p_r},
     \label{}
    \end{equation}
    where $F$ is bounded if the lower bound in\eqref{asound} holds,
    and where $\sum_{i = 1}^j
    r_i p_i = k+\ell$, $r_i \leq r-1$. As in the previous lemma,
    all these terms can be bounded by the right-hand side of \eqref{eest}
    by Sobolev embedding and \eqref{dtvf}. The estimate for $\G_{k,\ell}$
    is similar.
    \end{proof}


\end{document}